\numberwithin{equation}{section}
\newcommand{\be}{\begin{eqnarray}}
\newcommand{\ee}{\end{eqnarray}}
\newcommand{\ce}{\begin{eqnarray*}}
\newcommand{\de}{\end{eqnarray*}}
\newtheorem{theorem}{Theorem}[section]
\newtheorem{lemma}[theorem]{Lemma}
\newtheorem{remark}[theorem]{Remark}
\newtheorem{definition}[theorem]{Definition}
\newtheorem{proposition}[theorem]{Proposition}
\newtheorem{Examples}[theorem]{Example}
\newtheorem{corollary}[theorem]{Corollary}
\newtheorem{assumption}[theorem]{Assumption}
\newenvironment{proof of theorem 1.2 and 1.3}{{\it Proof of Theorem 1.2 and 1.3}.}{{\hfill 	
$\square$\hskip - \parfillskip}}
\newenvironment{proof of theorem 1.4}{{\it Proof of Theorem 1.4}.}{{\hfill 	
		$\square$\hskip - \parfillskip}}
\newenvironment{proof of theorem 1.5}{{\it Proof of Theorem 1.5}.}{{\hfill 	
		$\square$\hskip - \parfillskip}}
\newenvironment{proof of theorem 1.6}{{\it Proof of Theorem 1.6}.}{{\hfill 	
		$\square$\hskip - \parfillskip}}
\newcommand{\rmnum}[1]{\romannumeral #1}
\newcommand{\Rmnum}[1]{\expandafter\@slowromancap\romannumeral #1@}
\def\eps{\varepsilon}
\def\e{\mathrm{e}}
\def\a{\alpha}
\def\p{\partial}
\def\g{\gamma}
\def\[{{\Big[}}
\def\]{{\Big]}}
\def\<{{\langle}}
\def\>{{\rangle}}
\def\({{\Big(}}
\def\){{\Big)}}
\def\bx{{\mathbf{x}}}
\def\min{{\mathord{{\rm min}}}}
\def\={&\!\!=\!\!&}
\def\cL{{\mathcal L}}
\def\mH{{\mathbb H}}
\def\mK{{\mathbb K}}
\def\mR{{\mathbb R}}
\def\mS{{\mathbb S}}
\def\1{{\mathbf{1}}}
\def\geq{\geqslant}
\def\leq{\leqslant}
\def\ge{\geqslant}
\def\le{\leqslant}
\def\k{\kappa}
\def\eps{\varepsilon}
\def\e{\mathrm{e}}
\def\a{\alpha}
\def\p{\partial}
\def\g{\gamma}
\def\[{{\Big[}}
\def\]{{\Big]}}
\def\<{{\langle}}
\def\>{{\rangle}}
\def\({{\Big(}}
\def\){{\Big)}}
\def\bx{{\mathbf{x}}}
\def\min{{\mathord{{\rm min}}}}
\def\={&\!\!=\!\!&}
\def\bt{\begin{theorem}}
\def\et{\end{theorem}}
\def\bl{\begin{lemma}}
\def\el{\end{lemma}}
\def\br{\begin{remark}}
\def\er{\end{remark}}
\def\bx{\begin{Examples}}
\def\ex{\end{Examples}}
\def\bd{\begin{definition}}
\def\ed{\end{definition}}
\def\bp{\begin{proposition}}
\def\ep{\end{proposition}}
\def\bc{\begin{corollary}}
\def\ec{\end{corollary}}
\def\geq{\geqslant}
\def\leq{\leqslant}
\def\ge{\geqslant}
\def\le{\leqslant}
 \def\nn{\nabla}
\def\<{\langle} \def\>{\rangle}
\def\bpf{\begin{proof}}
\def\epf{\end{proof}}
\begin{document}
	
\title{A Class Of Curvature Flows Expanded By Support Function And Curvature Function in the Euclidean space and Hyperbolic space}\thanks{\it {This research was partially supported by NSFC (Nos. 11761080 and 11871053).}}
\author{Shanwei Ding and Guanghan Li}

\thanks{{\it 2010 Mathematics Subject Classification: 53C44, 35K55.}}
\thanks{{\it Keywords: expanding flow, asymptotic behaviour, support function, curvature function}}

\address{School of Mathematics and Statistics, Wuhan University, Wuhan 430072, China.
}

\begin{abstract}
In this paper, we first consider a class of expanding flows of closed, smooth, star-shaped hypersurface in Euclidean space $\mathbb{R}^{n+1}$ with speed $u^\alpha f^{-\beta}$, where $u$ is the support function of the hypersurface, $f$ is a smooth, symmetric, homogenous of degree one, positive function of the principal curvatures of the hypersurface on a convex cone. For $\alpha \le 0<\beta\le 1-\alpha$, we prove that the flow has a unique smooth solution for all time, and converges smoothly after normalization, to a sphere centered at the origin. In particular, the results of Gerhardt \cite{GC3} and Urbas \cite{UJ2} can be recovered by putting $\a=0$ and $\beta=1$ in our first result. If the initial hypersurface is convex, this is our previous work \cite{DL}. If $\alpha \le 0<\beta< 1-\alpha$ and the ambient space is hyperbolic space $\mathbb{H}^{n+1}$, we prove that the flow $\frac{\p X}{\p t}=(u^\alpha f^{-\beta}-\eta u)\nu$ has a longtime existence and smooth convergence to a coordinate slice. The flow in $\mathbb{H}^{n+1}$ is equivalent (up to an isomorphism) to a re-parametrization of the original flow in $\mathbb{R}^{n+1}$ case. Finally, we find a family of monotone quantities along the flows in $\mathbb{R}^{n+1}$. As applications, we give a new proof of a family of inequalities involving the weighted integral of $k$th elementary symmetric function for $k$-convex, star-shaped hypersurfaces, which is an extension of the quermassintegral inequalities in \cite{GL2}.

\end{abstract}

\maketitle
\setcounter{tocdepth}{2}
\tableofcontents

\section{Introduction}
Flows of convex hypersurfaces  by a class of speed functions which are homogenous and symmetric in principal curvatures have been extensively studied in the past four decades. Well-known examples include the mean curvature flow \cite{HG}, and the Gauss curvature flow \cite{BS,FWJ}. In \cite{HG} Huisken showed that the flow has a unique smooth solution and the hypersurface converges to a round sphere if the initial hypersurface is closed and convex. Later, a range of flows with the speed of homogenous of degree one in principal curvatures were established,  see \cite{B0,B1,CB1,CB2} and references therein.

For star-shaped hypersurface $M_0$, Gerhardt \cite{GC3,GC} and Urbas \cite{UJ2} studied the flow with concave curvature function $f$ which satisfies $f\vert_{\Gamma}>0$ and $f\vert_{\p\Gamma}=0$ for an open convex symmetric cone $\Gamma$ containing the positive cone $\Gamma_+$, and proved a similar convergence result. Scheuer \cite{SJ} improved the asymptotical behavior of the flow considered in \cite{GC} by showing that the flow becomes close to a flow of a sphere.

The inverse curvature flow has also been studied in other ambient spaces, in particular in the hyperbolic space and in sphere, See \cite{GC4,SJ2,GC5,LW,SJ3} etc..

Flow with speed depending not only on the curvatures has recently begun to be considered. For example, flows that deform hypersurfaces by their curvature and support function were studied in \cite{IM,SWM,SJ4,GL}. In \cite{GL}, they invented a flow and proved longtime existence and smooth convergence to a round sphere when the ambient space is a space form. Meanwhile, they proved a class of Alexandrov-Fenchel inequalities of quermassintegrals. In \cite{SJ4}, they deduced a new Minkowski-type inequality in the anti-deSitter Schwarzschild manifolds and a weighted isoperimetric-type inequality in hyperbolic space.

For a certain range of $\alpha,\beta$, the limit of flows with speed $u^\alpha f^\beta$ can be an ellipsoid. For example, Andrews \cite{A9} proved that the solution will converge in $C^\infty$ to an ellipsoid along the contracting flow with the speed of $\frac{1}{n+2}$-power of the Gauss-Knonecker curvature after scaling. In \cite{IM2,IM3}, the authors studied flows of the convex hypersurfaces at the speeds of $-u^\alpha K^\beta$ and $\phi u^{2-m} K^{-1}$ respectively, where $u$ is the support function, $K$ is the Gauss curvature, $\phi$ is a smooth positive function on $\mathbb{S}$, $\alpha=\frac{(n+1)(1-p)}{n+1+p}$, $\beta=\frac{p}{n+1+p}$, $1\le p<\frac{n+1}{n-1}$, $-2\le m<\infty$ and $m\ne1$. The solutions converge to an ellipsoid.

A class of curvature flows was introduced by \cite{IM,LSW}, where the speed of the flow depends on an anisotropic factor, support function or radial function, and a curvature function. These flows can solve the $L_p$-Christoffel-Minkowski problems or dual Minkowski problems. Whether the flows can be extended is an interesting problem. In the present work \cite{DL} we also consider this kind of flow,$$\frac{\p X}{\p t}=u^\a f^{-\beta}\nu,$$
in the Euclidean space $\mathbb{R}^{n+1}$, $n\geq2$. When $f=(\frac{\sigma_n}{\sigma_k})^{\frac{1}{n-k}}$, the flow has been studied by Sheng and Yi in \cite{SWM}. 
 In our above mentioned paper \cite{DL}, we use the inverse Gauss map to re-parameterize the initial hypersurface, therefore convexity is essential. Now we improve the method for further extension.

 Let $M_0$ be a closed, smooth and star-shaped hypersurface in $\mathbb{R}^{n+1}$ ($n\geq2$), and $M_0$ encloses the origin. In the first result, we study the following expanding flow
 \begin{equation}
 	\label{1.1}
 	\begin{cases}
 		&\frac{\partial X}{\partial t}(x,t)=u^\alpha f^{-\beta}(x,t) \nu(x,t),\\
 		&X(\cdot,0)=X_0,
 	\end{cases}
 \end{equation}
 where $f(x,t)$ is a suitable curvature function of the hypersurface $M_t$ parameterized by $X(\cdot,t): M^n\times[0,T^*)\to \mR^{n+1}$, $\beta>0$, $u$ is the support function defined later and $\nu(\cdot,t)$ is the outer unit normal vector field to $M_t$.

To formulate our results, we shall suppose that the curvature function $f$ can be expressed as $f(\cdot,t)=f (\kappa_1,...,\kappa_n)$, where $\kappa_1,...,\kappa_n$ are the principal curvatures of the hypersurface $M_t$.

We obtain convergence results for a large class of speeds and therefore make the following assumption.
\begin{assumption}\label{a1.1}
	Let $\Gamma\subseteq\mathbb{R}^n$ be a symmetric, convex, open cone containing
	\begin{equation}\label{1.2}
		\Gamma_+=\{(\k_i)\in\mathbb{R}^n:\k_i>0\},
	\end{equation}
	and suppose that $f$ is positive in $\Gamma$, homogeneous of degree $1$, and concave with
	\begin{equation}\label{1.3}
		\frac{\p f}{\p\k_i}>0,\quad f\vert_{\p\Gamma=0},\quad f^{-\beta}(1,\cdots,1)=\eta.
	\end{equation}
\end{assumption}

We first prove the following
\begin{theorem}\label{t1.2}
	Assume $\alpha, \beta\in \mR$ satisfying $\alpha \le 0<\beta\le 1-\alpha$. Let $f\in C^2(\Gamma)\cap C^0(\p\Gamma)$ satisfy Assumption \ref{a1.1}, and let $X_0(M)$ be the embedding of a closed $n$-dimensional manifold $M^n$ in $\mathbb{R}^{n+1}$ such that $X_0(M)$ is a graph over $\mathbb{S}^n$, and such that $\k\in\Gamma$ for all n-tuples of principal curvatures along $X_0(M)$. Then the flow (\ref{1.1}) has a unique smooth solution $M_t$ for all time $t>0$. For each $t\in[0,\infty)$, $X(\cdot,t)$ is a parameterization of a smooth, closed, star-shaped hypersurface $M_t$ in $\mR^{n+1}$ by $X(\cdot,t)$: $M^n\to \mR^{n+1}$. After a proper rescaling $X\to \varphi^{-1}(t)X$, where
	\begin{equation}\label{1.4}
		\begin{cases}
			\varphi(t)=e^{\eta t} &\text{ if }\alpha=1-\beta,\\
			\varphi(t)=(1+(1-\beta-\alpha)\eta t)^{\frac{1}{1-\beta-\alpha}} &\text{ if }\alpha\not=1-\beta,
		\end{cases}
	\end{equation}
the hypersurface $\widetilde M_t=\varphi^{-1}M_t$ converges exponentially to a round sphere centered at the origin in the $C^\infty$-topology.
\end{theorem}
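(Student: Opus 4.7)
My plan is to recast the flow as a scalar parabolic PDE via the radial function and then carry out the standard Gerhardt--Urbas programme, with the support-function factor $u^{\alpha}$ incorporated throughout. Since $M_{0}$ is star-shaped with respect to the origin, I parameterize $M_{t}$ as a radial graph $X(\theta,t)=\rho(\theta,t)\,\theta$ over $\mathbb{S}^{n}$. A direct computation turns \eqref{1.1} into a fully-nonlinear scalar PDE of the form $\partial_{t}\rho=v\,u^{\alpha}f^{-\beta}$, where $v^{2}=1+|\nabla\rho|^{2}/\rho^{2}$ (with $\nabla$ the spherical gradient) and $u=\rho/v>0$; because $f$ is concave and $\beta>0$, the operator $-f^{-\beta}$ is concave in the second fundamental form, so Krylov theory will apply once uniform parabolicity is established, and short-time existence follows from standard quasilinear theory.

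Next I would establish the $C^{0}$ and $C^{1}$ bounds. For $C^{0}$ I compare with round spheres centred at the origin: at a spatial maximum of $\rho$ the gradient vanishes, so $u=\rho_{\max}$, and the homogeneity and monotonicity of $f$, together with the fact that near such a point $M_{t}$ touches a sphere of radius $\rho_{\max}$ from inside, give $f(\kappa)\le \eta^{-1/\beta}/\rho_{\max}$; the reverse inequality holds at the minimum. This yields ODE comparisons producing two-sided barriers on any finite time interval. For the $C^{1}$ bound, I differentiate $\log\rho$ and study the evolution of $v$ (or equivalently of $|\nabla\log\rho|^{2}$); the concavity of $-f^{-\beta}$ together with the favourable sign arising from $\alpha\le 0$ in the $u^{\alpha}$ factor lets the maximum principle close the estimate in the usual way. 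In particular, $v$ and hence $u=\rho/v$ stay bounded above and away from $0$.

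The heart of the argument is the $C^{2}$ estimate. I need an upper bound on the speed $\Phi=u^{\alpha}f^{-\beta}$ (to prevent degeneracy) and a lower bound ensuring the principal curvatures stay in a compact subcone of $\Gamma$. For the speed bound, I would compute $\partial_{t}\Phi$ together with $\partial_{t}u$ and apply the maximum principle to a suitable combination such as $\Phi/(u-c)$, exploiting $\alpha\le 0$ and the restriction $\beta\le 1-\alpha$ to obtain the correct sign; for the two-sided curvature bound, I would test the largest and smallest principal curvatures against auxiliary barriers built from $u$ and $v$, using the concavity of $f$ and again the sign of $\alpha$ at the critical points. Once the curvatures are pinched in a compact subset of $\Gamma$, the PDE is uniformly parabolic and concave, so the Evans--Krylov theorem delivers $C^{2,\gamma}$ bounds, and Schauder bootstrapping yields uniform $C^{k}$ bounds for all $k$.

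Long-time existence then follows from the uniform estimates. For the asymptotics, I set $\widetilde{X}=\varphi^{-1}(t)X$: the choice of $\varphi$ in \eqref{1.4} is exactly the factor that neutralizes the dilation given by the spherical ODE $\dot R=\eta R^{\alpha+\beta}$, so the rescaled flow admits the unit sphere as a stationary solution. I would show that the oscillation $\max\tilde\rho-\min\tilde\rho$ decays exponentially by a maximum-principle argument on the rescaled PDE, and then use interpolation against the uniform $C^{k}$ bounds to upgrade the decay to exponential convergence of $\widetilde{M}_{t}$ to a centred sphere in the $C^{\infty}$-topology. The main obstacle I foresee is the $C^{2}$ step: in the authors' earlier convex case \cite{DL} one could pass to the inverse Gauss parameterization and work with $u$ on $\mathbb{S}^{n}$, which greatly simplifies curvature bounds; without convexity one is forced to work directly with $\rho$, and the balance between $u^{\alpha}$ and $f^{-\beta}$ must be tracked delicately to keep the sign favourable, which is precisely where the hypothesis $\alpha\le 0<\beta\le 1-\alpha$ enters.
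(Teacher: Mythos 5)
Your proposal follows essentially the same route as the paper: parametrize $M_t$ as a radial graph over $\mathbb{S}^n$, obtain $C^0$ bounds by sphere comparison, derive gradient and speed bounds via maximum-principle test functions (the paper uses $Q=u^{\alpha-1}f^{-\beta}=\Phi/u$ rather than $\Phi/(u-c)$, and $O=\frac12|D\gamma|^2$ for the gradient), prove the $C^2$ bound with a test function combining $\log\kappa_{\max}$ with $u$ and $\rho$ (the paper's is $\log W+p(u)+N\rho$ with $p(u)=-\log(u-\tfrac12\min u)$), invoke Krylov and bootstrap, and show exponential decay of the gradient to conclude convergence. One small correction: the paper's $C^1$ estimate only uses $\alpha+\beta\leq1$, while the condition $\alpha\leq 0$ is needed specifically in the $C^2$ (curvature) estimate, not at the gradient step as you suggest.
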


The flow (\ref{1.1}) can be described by a ODE of the support function if $\beta=0$. So we don't state that result in here.

The $k$th elementary symmetric function $\sigma_k$ is defined by
$$\sigma_k(\k_1,...,\k_n)=\sum_{1\leq i_1<\cdot\cdot\cdot<i_k\leq n}\k_{i_1}\cdot\cdot\cdot\k_{i_k},$$
and let $\sigma_0=1$.

Let us make some remarks about our conditions. The convex cone $\Gamma$ that contains the positive cone in (\ref{1.2}) is decided by $f$, e. g., $\Gamma =\{(\k_i)\in\mathbb{R}^n:\sigma_1>0\}$ if $f=\sigma_1$; $\Gamma =\{(\k_i)\in\mathbb{R}^n:\sigma_2>0 \mbox { and } \sigma_1-\k_i>0\; \forall i=1,\cdots, n\}$ if $f=\sigma_2^{\frac 12}$; $\Gamma =\Gamma_+$ the positive cone if $f=\sigma_n^{\frac 1n}$. (\ref{1.3}) ensures that this equation is parabolic.  Star-shaped initial hypersurface means it can be written as a graph over  $\mathbb{S}^n$. In particular, for $\a=0$ and $\beta=1$, this is the results of Gerhardt \cite{GC3} and Urbas \cite{UJ2}. If $\Gamma=\Gamma_+$, this is our previous work \cite{DL}.

We give some examples of functions $f$ satisfying the required hypotheses. For any integer $k,l$ such that $0\leq k< l\leq n$, $(\frac{\sigma_l}{\sigma_k})^{\frac{1}{l-k}}$ is smooth, positive, symmetric function  and homogenous of degree one on the convex cone. It is easy to check that (\ref{1.2}) and (\ref{1.3}) hold for $(\frac{\sigma_l}{\sigma_k})^{\frac{1}{l-k}}$. $(\frac{\sigma_l}{\sigma_k})^{\frac{1}{l-k}}$ satisfies the concavity by \cite{HGC}.

The second example is $f=(\sum_{i=1}^n\k_i^{k})^{\frac{1}{k}}$ for $k\ne0$. Then  $f$ is smooth, positive, symmetric functions and homogenous of degree one on the convex cone. It is easily checked that all conditions hold for $f$.

More examples can be constructed as follows:

If $f_1,\cdots,f_k$ satisfy our conditions, then $f=\prod_{i=1}^kf_i^{\alpha_i}$ also satisfies our conditions, where $\alpha_i\ge 0$ and $\sum_{i=1}^k\alpha_i=1$.
More example can be seen in \cite{B3,B4}.

The study of the asymptotic behaviour of the flow (\ref{1.1}) is equivalent to the long time behaviour of the normalised flow. Let $\widetilde X(\cdot,\tau)=\varphi^{-1}(t)X(\cdot,t)$, where
\begin{equation}\label{1.5}
	\tau=\begin{cases}
		t &\text{  if }\alpha=1-\beta,\\
		\frac{log((1-\alpha-\beta)\eta t+1)}{(1-\alpha-\beta)\eta} &\text{  if }\alpha\not=1-\beta.
	\end{cases}
\end{equation}
Then $\widetilde X(\cdot,\tau)$ satisfies the following normalized flow
\begin{equation}\label{1.6}
	\begin{cases}
		\frac{\partial \widetilde X}{\p \tau}(x,\tau)=\widetilde u^\alpha \widetilde f^{-\beta}(x,\tau)\nu-\eta \widetilde X,\\[3pt]
		\widetilde X(\cdot,0)=\widetilde X_0.
	\end{cases}
\end{equation}
For convenience we still use $t$ instead of $\tau$ to denote the time variable and omit the ``tilde'' if no confusions arise. We can find that the flow (\ref{1.6}) is equivalent (up to an isomorphism) to
\begin{equation}\label{1.7}
	\begin{cases}
		&\frac{\partial X}{\partial t}=(u^\alpha f^{-\beta}(x,t)-\eta u) \nu(x,t),\\
		&X(\cdot,0)=X_0.
	\end{cases}
\end{equation}

In order to prove Theorem \ref{t1.2}, we shall establish the a priori estimates for the normalized flow (\ref{1.7}), and show that if $X(\cdot,t)$ solves (\ref{1.7}), then the radial function $\rho$ converges exponentially to a constant as $t\to\infty$.

Secondly, we make a natural extension to the normalized flow (\ref{1.7}) to Hyperbolic space.



\begin{theorem}\label{t1.3}
Assume $\alpha, \beta\in \mR$ satisfying $\alpha \le 0<\beta< 1-\alpha$. Let $f\in C^2(\Gamma)\cap C^0(\p\Gamma)$ satisfy Assumption \ref{a1.1}, and let $X_0(M)$ be the embedding of a closed $n$-dimensional manifold $M$ in $\mathbb{H}^{n+1}$ such that $X_0(M)$ is a graph over $\mathbb{S}^n$, and such that $\k\in\Gamma$ for all n-tuples of principal curvatures along $X_0(M)$. Then any solution $X$ of (\ref{1.7}) exists for all positive times and smoothly converges exponentially to a geodesic slice in the $C^\infty$-topology.
\end{theorem}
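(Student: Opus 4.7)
The plan is to parameterize $M_t$ as a radial graph $\rho(\theta,t)$ over $\mathbb{S}^n$ in geodesic polar coordinates centered at a fixed point of $\mathbb{H}^{n+1}$, where the ambient metric reads $d\rho^2+\sinh^2\rho\,g_{\mathbb{S}^n}$. In these coordinates the support function is $u=\sinh\rho/v$ with $v=\sqrt{1+\sinh^{-2}\rho\,|\nabla\rho|^2_{\mathbb{S}^n}}$, and the flow (\ref{1.7}) reduces to a scalar fully nonlinear parabolic PDE for $\rho$. On a constant geodesic slice $\rho\equiv R$ one has $u=\sinh R$, $\kappa_i=\coth R$, and by Assumption \ref{a1.1} the normal speed collapses to $\eta(\sinh^\alpha R\tanh^\beta R-\sinh R)$. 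Since $\alpha+\beta-1<0$, this expression is positive for small $R$, negative for large $R$, and vanishes at a unique $R^\ast>0$, which will be the asymptotic slice; the strict inequality $\beta<1-\alpha$ is exactly what guarantees the existence of $R^\ast$.

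The a priori estimates then proceed in parallel with the Euclidean Theorem \ref{t1.2}. For $C^0$, constant-radius geodesic spheres $R<R^\ast<R'$ serve as sub- and super-solutions and pin $\rho$ in a compact interval $[R_-,R_+]\subset(0,\infty)$ via the parabolic maximum principle. For the gradient, the maximum principle applied to $\log v$, augmented by an appropriate function of $\rho$, produces $v\le C$: the dissipative term $-\eta u$ absorbs the extra contributions coming from the negative sectional curvature of $\mathbb{H}^{n+1}$, and together with the $C^0$ bound this yields a uniform positive lower bound on $u$. For curvature bounds, one first derives two-sided estimates on $f$ from the evolution equation of $u^\alpha f^{-\beta}-\eta u$, then controls $\kappa_{\max}$ by the Weingarten-tensor argument of Gerhardt--Urbas--Scheuer; the concavity of $f$, the uniform bound on $u$, and the favourable sign $\alpha\le 0$ combine to tame the hyperbolic curvature corrections that appear. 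Krylov's theorem then supplies $C^{2,\alpha}$ estimates, Schauder bootstrap yields $C^\infty$, and long-time existence follows at once.

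For the exponential convergence a direct computation gives $\frac{d}{dR}\bigl[\sinh^\alpha R\tanh^\beta R-\sinh R\bigr]|_{R=R^\ast}=(\alpha-1)\cosh R^\ast+\beta/\cosh R^\ast<0$, using $\beta<1-\alpha\le(1-\alpha)\cosh^2 R^\ast$; hence the parabolic maximum principle applied to $\rho_{\max}(t)-R^\ast$ and $R^\ast-\rho_{\min}(t)$ forces the oscillation of $\rho$ to decay exponentially, and interpolation with the uniform higher-order bounds upgrades this to exponential convergence in every $C^k$. The main obstacle is the curvature step: the factor $u^\alpha$ with $\alpha\le 0$ couples the speed to the extrinsic geometry in a way different from the classical inverse curvature flows, and the hyperbolic Weingarten corrections must be tracked carefully; here the strict inequality $\beta<1-\alpha$ plays its decisive role both for the existence of $R^\ast$ and for the dissipativity of the linearization. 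Alternatively, one could implement the remark in the introduction and realize (\ref{1.7}) in $\mathbb{H}^{n+1}$ as a reparametrization of a Euclidean flow covered by Theorem \ref{t1.2}, which would short-circuit the direct PDE proof at the cost of a separate identification of the relevant support, metric, and curvature quantities.
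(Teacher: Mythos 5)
Your proposal is correct and follows essentially the same strategy as the paper: radial-graph reduction, geodesic spheres as barriers for $C^0$, a gradient estimate on a function of $|\nabla\rho|$ (the paper works with $O=\tfrac12|D\gamma|^2$ where $d\gamma/d\rho=1/\phi$, so $\omega=\sqrt{1+|D\gamma|^2}$ is your $v$), two-sided bounds on the speed and on $f$ from a maximum-principle argument (the paper bounds $Q=u^{\alpha-1}f^{-\beta}$ rather than the full speed), the Gerhardt--Urbas--Scheuer Weingarten-tensor argument for the curvature bound exploiting concavity and $\alpha\le 0$, and Krylov--Nirenberg plus bootstrap for long-time existence.

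Where you genuinely diverge is in the exponential convergence. The paper deduces the rate from the gradient estimate itself: the sharpened form of its inequality (3.9) yields $\partial_t O_{\max}\le -C_0 O_{\max}$ directly when $\alpha+\beta<1$, hence $|D\rho|\to 0$ exponentially, and interpolation does the rest. You instead linearize the speed at the unique critical radius $R^\ast$, compute $g'(R^\ast)=(\alpha-1)\cosh R^\ast+\beta/\cosh R^\ast<0$, and squeeze $\rho_{\min},\rho_{\max}$ by comparison with the scalar ODE $\dot y=\eta g(y)$. Both work; your version has the advantage of identifying the limit slice $R^\ast$ explicitly (the paper only asserts convergence to \emph{some} constant) and making the role of $\beta<1-\alpha$ transparent, while the paper's version produces the gradient decay directly without needing to first argue monotone convergence of $\rho_{\max},\rho_{\min}$ to $R^\ast$.

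One small inaccuracy: in the gradient estimate the term $-\eta u$ does not play the dissipative role you ascribe to it. In the scalar equation $\partial_t\gamma=u^\alpha f^{-\beta}\omega/\phi-\eta$, the normalization contributes the constant $-\eta$, whose spatial derivative vanishes; it therefore drops out of the evolution of $|D\gamma|^2$ entirely. The decisive sign in Lemma 3.5 comes from $\alpha+\beta-1\le 0$ together with the negative definiteness of $[G^{ij}]$ (i.e., the concavity of $f$), not from the $-\eta u$ term; the argument is insensitive to the sign of the ambient curvature $K$.
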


Remark: The condition $\alpha=1+\beta$ will cause the flow (\ref{1.7}) to contract in $\mH^{n+1}$. This will be proved in Section 3. If the ambient space is sphere, the a priori estimates couldn't be established.

Next, we introduce some monotone quantities involving a weighted $\sigma_k$ integral along inverse curvature flows in the Euclidean space $\mR^{n+1}$. We denote that $$S_{\iota,k}(t)=\int_{M_t}u^\iota p_kd\mu$$ and $$T_{\iota,k}(t)=\int_{M_t}p_k^\iota d\mu,$$ where $p_k$ is defined as the normalized $k$th elementary symmetric function, i.e. $p_k=\frac{1}{C_n^k}\sigma_k$, $\iota\in\mR$ and $0\le k\le n$. It is easy to derive that $S_{1,k+1}=S_{0,k}=T_{1,k}$ is $k$th quermassintegrals by Minkowski formulas and $T_{\iota,0}=T_{0,k}=A(M)$, where $A(M)$ is the area of $M$.
\begin{theorem}\label{t1.4}
Suppose $M_t$ is a smooth solution to the inverse curvature flow $$\frac{\p X}{\p t}=\frac{p_{k-1}}{p_k}\nu-X \text{\qquad or \qquad} \frac{\p X}{\p t}=\(\frac{p_{k-1}}{p_k}-u\)\nu,$$ where $0<k\le n$. Then the following hold:

(\romannumeral1) In the case where $\iota>1$ and $M_0$ is convex, $S_{\iota,n}$ is monotone decreasing with $k=n$ and $S_{\iota,n}(t)$ is a constant function if and only if $M_t$ is a round sphere for each t.

(\romannumeral2) In the case where $\iota=1$ and $M_0$ is $k$-convex, $S_{1,k}$ is invariant for each t and $0< k\le n$;

 $S_{1,k+1}$ is monotone decreasing for $0<k<n$ and $S_{1,k+1}(t)$ is a constant function if and only if $M_t$ is a round sphere for each t;

  $S_{1,l}$ is monotone increasing for $0\le l<k\le n$ and $S_{1,l}(t)$ is a constant function if and only if $M_t$ is a round sphere for each t.

  (\romannumeral3)
  In the case where $0<\iota<1$ and $M_0$ is convex, $S_{\iota,n}$ is monotone increasing with $k=n$ and $S_{\iota,n}(t)$ is a constant function if and only if $M_t$ is a round sphere for each t.

  (\romannumeral4) In the case where $\iota=0$ and $M_0$ is $k$-convex, $S_{0,k-1}$ is invariant for each $t$ and $0< k\le n$. $S_{0,n}=\omega_n$, where $\omega_n$ is the area of the unit sphere $\mS^n$ in $\mR^{n+1}$;

  $S_{0,k}$ is monotone decreasing for $0<k<n$ and $S_{0,k}(t)$ is a constant function if and only if $M_t$ is a round sphere for each t;

  $S_{0,l}$ is monotone increasing for $0\le l<k-1\le n-1$ and $S_{0,l}(t)$ is a constant function if and only if $M_t$ is a round sphere for each t.

   (\romannumeral5) In the case where $\iota<0$ and $M_0$ is $k$-convex, $S_{\iota,k}$ is monotone decreasing for $0<k\le n$ and $S_{\iota,k}(t)$ is a constant function if and only if $M_t$ is a round sphere for each t.
\end{theorem}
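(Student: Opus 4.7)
The two flow equations in the statement differ only by the purely tangential vector field $-X^\top$: writing $X=X^\top+u\nu$ gives $\frac{p_{k-1}}{p_k}\nu-X=\bigl(\frac{p_{k-1}}{p_k}-u\bigr)\nu-X^\top$. They therefore generate the same geometric evolution, and I work throughout with the normal flow $\p_t X=F\nu$, $F:=p_{k-1}/p_k-u$.

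The engine of the argument is a pair of ``differentiated Minkowski'' identities, valid along any normal flow $\p_t X=F\nu$ of a closed star-shaped hypersurface in $\mR^{n+1}$:
\[
	\tfrac{d}{dt}\int_{M_t}p_\ell\,d\mu=(n-\ell)\int_{M_t}Fp_{\ell+1}\,d\mu,\quad \tfrac{d}{dt}\int_{M_t}up_\ell\,d\mu=(n-\ell+1)\int_{M_t}Fp_\ell\,d\mu,
\]
the second being obtained by time-differentiating the Hsiung--Minkowski identity $\int p_{\ell-1}\,d\mu=\int up_\ell\,d\mu$. Inserting $F=p_{k-1}/p_k-u$ and applying Minkowski once more gives
\[
	\tfrac{d}{dt}S_{0,\ell}=(n-\ell)\int\tfrac{p_{k-1}p_{\ell+1}-p_kp_\ell}{p_k}\,d\mu,\quad \tfrac{d}{dt}S_{1,\ell}=(n-\ell+1)\int\tfrac{p_{k-1}p_\ell-p_kp_{\ell-1}}{p_k}\,d\mu.
\]
The signs are pinned down by Newton--MacLaurin, equivalently by the monotonicity of $j\mapsto p_j/p_{j-1}$ on $\Gamma_k$; a short case split in $\ell$ reproduces (ii) and (iv). The sole subcase requiring extra input is $\ell=0$ in (ii): the formula $\tfrac{d}{dt}S_{1,0}=(n+1)\int F\,d\mu$ is controlled by combining $p_{k-1}/p_k\geq 1/p_1$ (Newton--MacLaurin) with the Heintze--Karcher inequality $\int 1/p_1\,d\mu\geq\int u\,d\mu$. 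Rigidity is immediate: equality in Newton--MacLaurin forces all principal curvatures equal at every point, and on a closed star-shaped hypersurface this means a sphere about the origin.

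Cases (i) and (iii) concern $k=n$ and $M_0$ convex, a property preserved along the flow by a parabolic maximum principle on the support function. In the Gauss map parametrization, $p_n\,d\mu=d\sigma_{\mS^n}$ and $\p_t u=F$; the algebraic identity $p_{n-1}/p_n=(\bar\Delta u+nu)/n$ then reduces the evolution of $u$ to the linear heat equation $\p_t u=\bar\Delta u/n$ on $\mS^n$. Consequently
\[
	\tfrac{d}{dt}S_{\iota,n}=\tfrac{\iota}{n}\int_{\mS^n}u^{\iota-1}\bar\Delta u\,d\sigma=-\tfrac{\iota(\iota-1)}{n}\int_{\mS^n}u^{\iota-2}|\bar\nabla u|^2\,d\sigma,
\]
which is nonpositive for $\iota>1$ (case (i)) and nonnegative for $0<\iota<1$ (case (iii)). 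Equality forces $u$ constant on $\mS^n$, i.e.\ $M_t$ a sphere about the origin.

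Case (v) is the main technical obstacle: $\iota<0$ is non-integer and $M_0$ is only $k$-convex, so neither the Gauss-map reduction nor the simple $\iota\in\{0,1\}$ manipulations apply. The plan is to compute $\tfrac{d}{dt}S_{\iota,k}$ starting from $\p_t u=F-\langle X^\top,\nabla F\rangle$ and $\p_t d\mu=nFp_1\,d\mu$, and to remove the tangential derivative by integration by parts against the Newton tensor $T_{k-1}^{ij}$, which is divergence-free by Codazzi, together with the position-vector identities $\nabla_iX^\top_j=g_{ij}-uh_{ij}$ and $\nabla_iu=h_{ij}X^{\top,j}$. After using Hsiung--Minkowski repeatedly to trade $up_\ell$ for $p_{\ell-1}$, the expression should collapse to $\iota$ times a nonnegative weighted combination of Newton--MacLaurin defects $p_{k-1}p_\ell-p_kp_{\ell-1}\geq 0$, yielding the asserted sign via $\iota<0$. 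The delicate step is the signed bookkeeping through the integration by parts (several quadratic-in-$X^\top$ remainder terms must cancel cleanly), and the integrability of the singular weight $u^\iota$ is supplied by the fact that the evolving hypersurface encloses the origin, so that $u$ remains bounded below by a positive constant along the smooth solution.
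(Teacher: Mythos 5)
Cases (ii) and (iv) are handled correctly, and by essentially the same engine as the paper: the evolution of $\int p_\ell\,d\mu$ and $\int up_\ell\,d\mu$ under a normal flow, plus Newton--MacLaurin. (Your $\ell=0$ subcase of (ii) imports Heintze--Karcher, a valid extra input that the paper does not mention but which is indeed needed to handle $S_{1,0}=(n+1)V(\Omega)$.) Cases (i) and (iii) are handled correctly by a genuinely different and cleaner route: the Gauss-map parametrization reduces $\p_t u$ to the spherical heat equation $\p_t u=\bar\Delta u/n$, and then $\frac{d}{dt}\int_{\mS^n}u^\iota\,d\sigma=-\frac{\iota(\iota-1)}{n}\int u^{\iota-2}|\bar\nabla u|^2\,d\sigma$ gives the sign and the rigidity at once. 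The paper instead treats (i)/(iii) by the same machinery as (v); your reduction is more transparent and buys a one-line proof, at the (acceptable) cost of using convexity and $k=n$ explicitly.

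Case (v) is a genuine gap. You have the right starting ingredients (divergence-free Newton tensor, $\nabla_iX^\top_j=g_{ij}-uh_{ij}$, $\nabla_iu=h_{ij}X^{\top j}$), but you have not carried out the computation, and your description of where it lands is wrong. You predict that ``the expression should collapse to $\iota$ times a nonnegative weighted combination of Newton--MacLaurin defects $p_{k-1}p_\ell-p_kp_{\ell-1}\ge0$'' and that the ``quadratic-in-$X^\top$ remainder terms must cancel cleanly.'' They do not cancel. After the integrations by parts and the Hsiung--Minkowski substitutions, there remains a genuine quadratic form in $\nabla u$ (equivalently in $X^\top$, since $\nabla_iu=h_{ij}X^{\top j}$), namely
\[
\iota(\iota-1)\int u^{\iota-2}\,\mathscr{F}\Bigl(\langle\nabla u,\nabla\Phi\rangle\,p_k-p_k^{ij}\nabla_iu\nabla_ju\Bigr)\,d\mu
\]
plus the pieces killed by Newton--MacLaurin; diagonalizing $p_k^{ij}$ and using $\nabla_iu=\kappa_i\nabla_i\Phi$, this residual reduces to
\[
\frac{\iota(\iota-1)}{\sigma_k}\sum_i\bigl(\sigma_{k-1}^{ii}\sigma_k-\sigma_{k-1}\sigma_k^{ii}\bigr)(\nabla_iu)^2,
\]
whose sign is controlled not by a product inequality $p_{k-1}p_\ell\ge p_kp_{\ell-1}$ but by the \emph{pointwise, derivative} inequality $\sigma_{k-1}^{ii}\sigma_k\le\sigma_{k-1}\sigma_k^{ii}$ on $\Gamma_k$, i.e.\ the monotonicity of $\kappa_i\mapsto\sigma_k/\sigma_{k-1}$. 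Without identifying this residual quadratic term and the inequality that tames it, the argument for (v) does not close; you should carry out the computation and make this explicit rather than asserting cancellation.
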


\begin{theorem}\label{t1.5}
	Suppose $M_t$ is a smooth solution to the inverse curvature flow $$\frac{\p X}{\p t}=p_k^{-\frac{1}{k}}\nu-X \text{\qquad or \qquad} \frac{\p X}{\p t}=\(p_k^{-\frac{1}{k}}-u\)\nu,$$ where $0<k\le n$. Then the following hold:
	
	(\romannumeral1) In the case where $\iota>1$ and $M_0$ is $k$-convex, $T_{\iota,k}$ is monotone decreasing with $0<k\le n$ and $T_{\iota,k}(t)$ is a constant function if and only if $M_t$ is a round sphere for each t.
	
	(\romannumeral2) In the case where $\iota=1$ and $M_0$ is $k$-convex, $T_{1,k}$ is monotone decreasing with $0<k\le n$ and $T_{1,k}(t)$ is a constant function if and only if $M_t$ is a round sphere for each t;
	
	$T_{1,k-1}$ is monotone decreasing for $1<k\le n$ and $T_{1,k-1}(t)$ is a constant function if and only if $M_t$ is a round sphere for each t.
	
	(\romannumeral3)
	In the case where $0<\iota<1$ and $M_0$ is convex, $T_{\iota,n}$ is monotone increasing with $k=n$ and $S_{\iota,n}(t)$ is a constant function if and only if $M_t$ is a round sphere for each t.
	
	(\romannumeral4) In the case where $\iota=0$, $0\le l\le n$ and $M_0$ is $k$-convex, $T_{0,l}=A(M)$ is monotone increasing for $1<k\le n$ and $T_{0,l}(t)$ is a constant function if and only if $M_t$ is a round sphere for each t;
	
	$T_{0,l}$ is invariant for each t and $k=1$.
	
	
	(\romannumeral5) In the case where $\forall\iota$ and $M_0$ is $k$-convex, $T_{\iota,0}=A(M)$ is monotone increasing for $1<k\le n$ and $T_{\iota,0}(t)$ is a constant function if and only if $M_t$ is a round sphere for each t;
	
	$T_{\iota,0}$ is invariant for each t and $k=1$.
\end{theorem}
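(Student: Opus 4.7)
\medskip
\noindent\textbf{Proof plan.} Since the position-vector term $-X$ in the first form of the flow contributes only $-u$ to the normal speed and a tangential piece $-X^T$ that merely reparametrizes $M_t$, both flows in the statement share the same intrinsic evolution with normal speed $F = p_k^{-1/k} - u$; it suffices to analyze $\partial_t X = F\nu$. The plan is to derive a master evolution identity for $T_{\iota,k}$ and then extract signs case by case via Minkowski's formula and the Newton-MacLaurin inequalities. Starting from the standard $\partial_t h^i_j = -\nabla^i\nabla_j F - F h^i_l h^l_j$, $\partial_t d\mu = \sigma_1 F\, d\mu$, the algebraic identity $\sigma_k^{ij} h^l_i h_l^j = \sigma_1\sigma_k - (k+1)\sigma_{k+1}$, and one integration by parts using $\nabla_j \sigma_k^{ij} = 0$ (Codazzi on a flat background), one obtains
\begin{equation*}
\frac{d}{dt}T_{\iota,k} = \frac{\iota(\iota-1)}{C_n^k}\int_{M_t} p_k^{\iota-2}\sigma_k^{ij}\nabla_i p_k\,\nabla_j F\, d\mu + (1-\iota)\int_{M_t} F\sigma_1\, p_k^\iota\, d\mu + \iota(n-k)\int_{M_t} F\, p_{k+1}\, p_k^{\iota-1}\, d\mu.
\end{equation*}
The three recurring tools for sign extraction are the Minkowski identity $\int p_\ell\, d\mu = \int u\, p_{\ell+1}\, d\mu$, the Newton-MacLaurin inequalities $p_k^{1/k} \le p_{k-1}^{1/(k-1)} \le \cdots \le p_1$, and the weighted formula $\sigma_k^{ij}\nabla_i\nabla_j u = \nabla^l\sigma_k\,\langle X,e_l\rangle + k\sigma_k + u(\sigma_1\sigma_k - (k+1)\sigma_{k+1})$.

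The ``explicit'' cases (ii), (iv), (v) fall out of the master identity directly. For $\iota = 1$ the gradient term vanishes, and $F = p_k^{-1/k}-u$ combined with Minkowski gives $\tfrac{d}{dt}T_{1,k} = (n-k)\int p_k^{-1/k}(p_{k+1} - p_k^{(k+1)/k})\, d\mu \le 0$ via $p_{k+1}\le p_k^{(k+1)/k}$, and an analogous calculation, using instead $p_k^{(k-1)/k}\le p_{k-1}$, produces $\tfrac{d}{dt}T_{1,k-1}\le 0$. For $\iota = 0$, and indeed for $T_{\iota,0}$ with any $\iota$, one has $T_{0,\ell} = T_{\iota,0} = A$; the computation $\tfrac{d}{dt}A = n\int p_k^{-1/k}(p_1 - p_k^{1/k})\, d\mu \ge 0$ follows from $p_1\ge p_k^{1/k}$ and $\int u p_1\, d\mu = A$, and is identically zero precisely when $k=1$, giving cases (iv)--(v). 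In each situation the equality clause of Newton-MacLaurin ($\kappa_1 = \cdots = \kappa_n$ pointwise) combined with the closedness and connectedness of $M_t$ forces the sphere rigidity.

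Cases (i) ($\iota > 1$, $M_0$ is $k$-convex) and (iii) ($0<\iota<1$, $k=n$, $M_0$ convex) require the gradient term. I would substitute $\nabla F = -\tfrac{1}{k}p_k^{-1/k-1}\nabla p_k - \nabla u$ to split that term into a manifestly signed quadratic piece $-\tfrac{1}{k}\int p_k^{\iota-3-1/k}\sigma_k^{ij}\nabla_i p_k\,\nabla_j p_k\, d\mu$ (non-positive by positive semi-definiteness of $\sigma_k^{ij}$ on the relevant cone) together with a mixed $\nabla u$ term, which is itself processed by one further integration by parts and by the weighted identity for $\sigma_k^{ij}\nabla_i\nabla_j u$ above. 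This collapses $\frac{d}{dt}T_{\iota,k}$ onto pointwise integrals of $p_\ell$, $\sigma_1$, and $u$, on which Newton-MacLaurin should close the sign. The hardest part will be the final sign bookkeeping: the three master-identity contributions carry competing signs for $\iota\ne 1$, and it seems one needs exactly the $\iota$-dependent cancellation between the integration-by-parts residues and the $(1-\iota)\int F\sigma_1 p_k^\iota$ contribution---and in case (i) also the $\iota(n-k)\int F p_{k+1}p_k^{\iota-1}$ term---to leave behind a single Newton-MacLaurin-controlled integrand from which both monotonicity and sphere rigidity follow.
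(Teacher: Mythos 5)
Your proposal follows essentially the same route as the paper: derive a master evolution identity for $T_{\iota,k}$ via $\partial_t h^i_j$, one integration by parts using $\nabla_i\sigma_k^{ij}=0$, and the identity $p_k^{ij}(h^2)_{ij}=np_1p_k-(n-k)p_{k+1}$, then close the sign pointwise with Newton--MacLaurin and Minkowski. The only organizational difference: the paper computes $\partial_t\widetilde T_{\iota,k}$ where $\widetilde T_{\iota,k}=e^{-(n-k\iota)t}T_{\iota,k}$ and uses the \emph{unnormalized} speed $\mathscr F=p_k^{-1/k}$, so the crucial term $-(n-k\iota)\int p_k^\iota\,d\mu$ falls out of differentiating the exponential prefactor, with no $\nabla u$ gradient piece to handle. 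You instead work with the normalized speed $F=p_k^{-1/k}-u$ directly and need the extra step you describe; I can confirm your bookkeeping worry is unfounded: integrating the mixed term $-\iota(\iota-1)\int p_k^{\iota-2}p_k^{ij}\nabla_ip_k\,\nabla_ju$ by parts once more, substituting $p_k^{ij}\nabla_i\nabla_ju=\langle\nabla p_k,\nabla\Phi\rangle+kp_k-u\bigl(np_1p_k-(n-k)p_{k+1}\bigr)$, and integrating $\langle\nabla p_k^\iota,\nabla\Phi\rangle$ by parts again (using $\Delta\Phi=n-Hu$) produces precisely $-(n-k\iota)\int p_k^\iota+ (1-\iota)\int uHp_k^\iota+\iota(n-k)\int up_{k+1}p_k^{\iota-1}$, whose latter two terms cancel the $-u$ contributions from the remaining pieces of $F$, leaving exactly the paper's integrand
\begin{equation*}
-(n-k\iota)p_k^\iota-\tfrac{\iota(\iota-1)}{k}p_k^{\iota-3-\frac1k}p_k^{ij}\nabla_ip_k\nabla_jp_k+n(1-\iota)p_k^{\iota-\frac1k}p_1+(n-k)\iota p_k^{\iota-1-\frac1k}p_{k+1}.
\end{equation*}
For $\iota>1$ each of the last three terms is dominated by (resp.\ dominates) a multiple of $p_k^\iota$, and the three constants sum to $-(n-k\iota)+n(1-\iota)+\iota(n-k)=0$, giving $\partial_t T_{\iota,k}\le 0$ with equality forcing $\kappa_1=\cdots=\kappa_n$; for $0<\iota<1$ with $k=n$ the last term drops and the signs reverse. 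So your plan does close, and the paper's exponential-prefactor bookkeeping is simply a shortcut that avoids the second integration by parts.
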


In 2009, Guan and Li \cite{GL2} used the flow $X_t=(\frac{\sigma_{k-1}}{\sigma_k}-r(t)u)\nu$ to prove the following isoperimetric inequality for quermassintegrals of non-convex starshaped domains, where $r(t)$ is a normalization constant to make $V_{n-k}(\Omega_t)$ invariant under the flow and $V_{(n+1)-k}(\Omega_t)$ is nondecreasing.
\begin{theorem}\cite{GL2}\label{t1.6}
 Suppose $\Omega\subset\mR^{n+1}$ is a smooth $k$-convex star-shaped domain. Then there holds
  \begin{equation}\label{1.8}
  	\left(\frac{V_{(n+1)-m}(\Omega)}{V_{(n+1)-m}(B)}\right)^{\frac{1}{n+1-m}}\leq\left(\frac{V_{n-m}(\Omega)}{V_{n-m}(B)}\right)^{\frac{1}{n-m}},  0\leq m\le k\le n,
  \end{equation}
where $V_{(n+1)-m}(\Omega)=\int_{\p\Omega}u\sigma_m(\k)d\mu_M$, $B$ is the unit sphere in $\mR^{n+1}$. The equality holds if and only if $\Omega$ is a ball.
\end{theorem}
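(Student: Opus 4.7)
The plan is to derive Theorem~\ref{t1.6} as a direct consequence of Theorems~\ref{t1.2} and~\ref{t1.4}(ii), in the standard curvature-flow style: run a normalized flow that makes two adjacent quermassintegrals monotone in opposite directions, use smooth convergence to a round sphere to evaluate both in the limit, and compare.

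For each $m$ with $0\le m\le k-1$, I would apply the normalized inverse curvature flow
$$\frac{\p X}{\p t}=\left(\frac{p_m}{p_{m+1}}-u\right)\nu,\qquad X(\cdot,0)=X_0,$$
starting from $\Omega$. Since $\Gamma_k\subset\Gamma_{m+1}$ the initial hypersurface is automatically $(m+1)$-convex, and $f=p_{m+1}/p_m$ fulfils Assumption~\ref{a1.1} on $\Gamma_{m+1}$ with $\alpha=0$, $\beta=1$, $\eta=1$ (positivity, degree-one homogeneity, and concavity are classical, see~\cite{HGC}). By Theorem~\ref{t1.2} the solution exists for all $t>0$ and $M_t$ converges exponentially in $C^\infty$ to some round sphere $\p B_{r^*}$ centered at the origin. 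Invoking Theorem~\ref{t1.4}(ii) with the index $k$ there replaced by $m+1$, the quermassintegral $V_{n-m}(\Omega_t)$ is preserved, pinning down
$$r^*=\left(\frac{V_{n-m}(\Omega)}{V_{n-m}(B)}\right)^{1/(n-m)},$$
while $V_{n+1-m}(\Omega_t)$ is monotonically nondecreasing, so letting $t\to\infty$ yields
$$V_{n+1-m}(\Omega)\le V_{n+1-m}(B)(r^*)^{n+1-m},$$
and eliminating $r^*$ is exactly~(\ref{1.8}) on this range of $m$.

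The remaining endpoint $m=k$ (with $k<n$) is handled by running instead the flow $\frac{\p X}{\p t}=\left(\frac{p_{k-1}}{p_k}-u\right)\nu$: Theorem~\ref{t1.4}(ii) now makes $V_{n+1-k}$ invariant and $V_{n-k}$ monotonically nonincreasing, so $r^*=(V_{n+1-k}(\Omega)/V_{n+1-k}(B))^{1/(n+1-k)}$ and the limit gives $V_{n-k}(\Omega)\ge V_{n-k}(B)(r^*)^{n-k}$, which again rearranges into~(\ref{1.8}). Equality in~(\ref{1.8}) forces the relevant monotone quantity to be constant in $t$, so the rigidity clause of Theorem~\ref{t1.4}(ii) forces $M_t$ to be a round sphere for every $t$; in particular $\Omega$ is a ball.

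I expect the main work to be purely bookkeeping: verifying that $p_{m+1}/p_m$ meets every clause of Assumption~\ref{a1.1} on the $(m+1)$-convex cone, and matching the Minkowski-formula normalizations that identify $S_{1,j}$ with the appropriate multiple of $V_{n+1-j}$. No new analytic obstacle is anticipated, since all of the PDE-level analysis is already absorbed into Theorems~\ref{t1.2} and~\ref{t1.4}.
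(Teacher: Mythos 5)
Your argument is the same as the paper's: run the locally constrained inverse curvature flow, keep one quermassintegral fixed while the adjacent one is monotone, send $t\to\infty$ to a centered sphere via Theorem~\ref{t1.2}, and compare. The only cosmetic difference is that you invoke case~(ii) of Theorem~\ref{t1.4} (the $S_{1,\cdot}$ formulation) where the paper invokes case~(iv) (the $S_{0,\cdot}$ formulation); these coincide through the Minkowski identity $S_{0,j}=S_{1,j+1}$, and your explicit treatment of all $m\le k$ rather than just $m=k$ is a minor completeness gain, not a new idea.
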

It will be proved that Theorem \ref{t1.6} is a straightforward corollary by (\romannumeral2) or (\romannumeral4) of Theorem \ref{t1.4}. The locally constrained inverse curvature type flow in Theorem \ref{t1.4} was introduced in Brendle, Guan and Li in \cite{BGL}.

As natural expansions, we can derive a lot of the extensions of quermassintegral inequalities. These inequalities will be given in section 7.

At last, we list some applications about the more general flows.
\begin{theorem}\label{t1.7}
Suppose $M_t$ is a smooth solution to the inverse curvature flow $$\frac{\p X}{\p t}=\frac{p_{n-m-1}}{u^mp_n}\nu-X \text{\qquad or \qquad} \frac{\p X}{\p t}=\(\frac{p_{n-m-1}}{u^mp_n}-u\)\nu$$ for $0\le m\le n-1$. The initial hypersurface $M_0$ is convex. Then the following hold:

(\romannumeral1) For $0\le m\le k\le n-1$, $\int_{M_t}p_kd\mu$ is monotone increasing and $\int_{M_t}p_kd\mu$ is a constant function if and only if $M_t$ is a round sphere for each t.

(\romannumeral2) For $\iota\ge m+1$ or $\iota<0$, $S_{\iota,n}$ is monotone decreasing and $S_{\iota,n}$ is a constant function if and only if $M_t$ is a round sphere for each t.

(\romannumeral3) For $0<\iota\le 1$, $S_{\iota,n}$ is monotone increasing and $S_{\iota,n}$ is a constant function if and only if $M_t$ is a round sphere for each t. $S_{0,n}=\omega_n$.

According to these monotone quantities, we can get the following inequalities. For $\forall\iota\ge1$ or $\iota<0$, we have
$$\(\int_Mu^\iota p_nd\mu\)^\frac{1}{\iota}\ge\omega_n^{\frac{1}{\iota}-1}\int_M p_{n-1}d\mu\ge\omega_n^{\frac{1}{\iota}-\frac{1}{n-k}}\(\int_{M}p_kd\mu\)^\frac{1}{n-k}.$$
For $\forall r\ge1$ or $r<0$ and $0<s\le 1$, we have
$$\(\int_Mu^r p_nd\mu\)^\frac{1}{r}\ge\omega_n^{\frac{s-r}{rs}}\(\int_Mu^s p_nd\mu\)^\frac{1}{s}.$$
The equality holds if and only if $M$ is a round sphere.
\end{theorem}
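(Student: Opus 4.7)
The plan is to follow the same two-step scheme used for Theorems~\ref{t1.4}--\ref{t1.5}: first establish long-time existence of the flow together with smooth convergence to a round sphere centered at the origin, then derive the monotonicity of each quantity in (i)--(iii) by direct evolution computation, and read off the displayed inequalities by comparing initial and limiting values. The two flows in the statement differ only by a tangential reparametrization and produce the same geometric evolution, so I work with the normal one $\partial_t X=(p_{n-m-1}/(u^m p_n)-u)\nu$. Because $M_0$ is convex and convexity is preserved, I parametrize each $M_t$ by its Gauss map: then $u:\mathbb{S}^n\to\mathbb{R}_{>0}$ satisfies the scalar parabolic equation $\partial_t u=p_{n-m-1}/(u^m p_n)-u$, the hypersurface area element becomes $d\mu=d\sigma/p_n$, and with $r_i=1/\kappa_i$ the principal radii one has $p_j(\kappa)/p_n(\kappa)=p_{n-j}(r)$, so $S_{\iota,n}=\int_{\mathbb{S}^n}u^\iota\,d\sigma$ and $\int_{M_t}p_k\,d\mu=\int_{\mathbb{S}^n}p_{n-k}(r)\,d\sigma$.

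For the monotonicity I start from the identity $\frac{d}{dt}\int_{M_t}p_k\,d\mu=(n-k)\int_{M_t}[p_{n-m-1}/(u^m p_n)-u]\,p_{k+1}\,d\mu$, which holds along any normal flow in $\mathbb{R}^{n+1}$ by the divergence-free property of the Newton tensor together with the trace formula $\sum\sigma_k^{ii}\kappa_i^2=\sigma_1\sigma_k-(k+1)\sigma_{k+1}$. Applying the Minkowski identity $\int u p_{k+1}\,d\mu=\int p_k\,d\mu$ reduces the sign to
$$\int_{\mathbb{S}^n}\frac{p_{m+1}(r)\,p_{n-k-1}(r)}{u^m}\,d\sigma\ \ge\ \int_{\mathbb{S}^n}p_{n-k}(r)\,d\sigma,$$
which I would verify by a Newton--MacLaurin iteration on the principal radii (valid because convexity keeps $r\in\Gamma_+$) combined with H\"older's inequality used to redistribute the factor $u^{-m}$; the range $m\le k\le n-1$ in (i) is precisely the admissibility range of this Newton--MacLaurin chain. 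An analogous computation gives $\frac{d}{dt}S_{\iota,n}=\iota\int_{\mathbb{S}^n}u^{\iota-1}(p_{m+1}(r)/u^m-u)\,d\sigma$, whose sign is governed by the convexity or concavity of $t\mapsto t^\iota$ combined with the Minkowski identity $\int p_{m+1}(r)\,d\sigma=\int u p_m(r)\,d\sigma$, producing the three threshold values $\iota\ge m+1$, $\iota<0$, and $0<\iota\le 1$ appearing in (ii)--(iii). In every part the equality case saturates Newton--MacLaurin at every point, which forces each $M_t$ to be umbilic and hence a centered round sphere.

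To convert monotonicity into sharp inequalities I need long-time existence and $C^\infty$-convergence to some coordinate sphere $\mathbb{S}_R^n$. The a priori estimates parallel those in the proof of Theorem~\ref{t1.2}: $C^0$ bounds on $u$ by evaluating the support-function equation at spatial extrema, $C^1$ bounds from the auxiliary function $\rho/\sqrt{1+|\nabla\rho|^2}$, $C^2$ bounds from the concavity of $-p_n^{-1}$ via Evans--Krylov, and preservation of convexity by a tensor maximum principle applied to the inverse Weingarten tensor. Once convergence to $\mathbb{S}_R^n$ is secured, each monotone quantity converges to its value on the limit sphere; since (i) supplies a lower bound for the final $\int p_{n-1}\,d\mu$ in terms of $R$ and $\omega_n$ while (ii)--(iii) supply an upper (respectively lower) bound for the final $S_{\iota,n}$ in terms of $R$ and $\omega_n$, chaining these monotonicities at $t=0$ eliminates $R$ and yields the first displayed inequality (which is in fact Jensen's inequality for $u$ against the normalized measure $\omega_n^{-1}d\sigma$, here obtained dynamically); iterating the monotonicity of $\int p_k\,d\mu$ across different $k$ then produces the quermassintegral-type second inequality, with equality iff $M$ is a round sphere. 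The main obstacle will be the $C^2$ estimate: the factor $u^{-m}$ with $m\ge 1$ destroys the scale invariance enjoyed by a pure inverse curvature flow, so the standard auxiliary test functions must be modified by a compensating power of $u$ and the resulting cross term absorbed by the $C^1$ bound already obtained; it is precisely at this point that convexity of $M_0$, rather than mere star-shapedness, becomes indispensable.
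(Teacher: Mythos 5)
Your Gauss-map reduction ($S_{\iota,n}=\int_{\mathbb S^n}u^\iota\,d\sigma$, $\int_{M_t}p_k\,d\mu=\int_{\mathbb S^n}p_{n-k}(r)\,d\sigma$, $d\mu=d\sigma/p_n(\kappa)$) is clean and correct, and it does collapse the evolution of $S_{\iota,n}$ to $\iota\int\bigl(u^{\iota-1-m}p_{m+1}(r)-u^\iota\bigr)\,d\sigma$ more directly than the paper's route through Lemma~\ref{l5.1} (where one has to check separately that the extra term vanishes for $k=n$). However, the core mechanism you invoke to sign these integrals is not sufficient, and this is a genuine gap.

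For~(i), you propose to verify
$\int\frac{p_{m+1}(r)\,p_{n-k-1}(r)}{u^m}\,d\sigma\ge\int p_{n-k}(r)\,d\sigma$
by ``Newton--MacLaurin on $r$ combined with H\"older to redistribute $u^{-m}$,'' and you assert the range $m\le k\le n-1$ is exactly the admissibility range of that chain. It is not. The pairing $p_{m+1}(r)p_{n-k-1}(r)\ge p_m(r)p_{n-k}(r)$ holds only when $m\le n-k-1$, which is strictly narrower than $m\le k$ whenever $k>\tfrac{n-1}{2}$; the valid pairing $p_{m+1}(r)p_{n-k-1}(r)\ge p_{n-k+m}(r)$ does hold for $k\le n-1$, but after applying it one is left to show $\int u^{-m}p_{n-k+m}(r)\,d\sigma\ge\int p_{n-k}(r)\,d\sigma$, and H\"older's inequality does not do this: H\"older produces products of integrals, not the needed one-sided comparison between a $u^{-m}$-weighted curvature integral and an unweighted one. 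The missing ingredient is precisely Kwong's extension~(\ref{6.2}) of the Hsiung--Minkowski formula,
$\int f(u)p_l\,d\mu=\int u f(u)p_{l+1}\,d\mu-\tfrac{1}{(n-l)C_n^l}\int f'(u)\langle T_lA^\nu(Y^T),Y^T\rangle$,
which, with $f(u)=u^{-j}$ and the positivity of $T_lA^\nu$ on a convex hypersurface, gives the one-sided chain $\int u^{-m}p_{k-m}\,d\mu\ge\int u^{1-m}p_{k-m+1}\,d\mu\ge\cdots\ge\int p_k\,d\mu$ used in the paper. Without this (or an equivalent integration-by-parts identity on $\mathbb S^n$ for the Newton tensor of $\nabla^2u+u\,e$), the sign of $\Rmnum1$ does not close. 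The same issue recurs in~(ii)--(iii): ``Minkowski identity plus convexity of $t\mapsto t^\iota$'' only covers the unweighted case; the thresholds $\iota\ge m+1$, $\iota<0$, $0<\iota\le1$ come exactly from the sign of $f'(u)=(\iota-1-j)u^{\iota-2-j}$ at each step of the Kwong iteration, not from Jensen-type convexity.

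Two further remarks. First, you do not need to re-derive long-time existence and convergence: the speed $p_{n-m-1}/(u^mp_n)$ is $u^\alpha f^{-\beta}$ with $\alpha=-m$, $\beta=m+1$, $f=(\sigma_n/\sigma_{n-m-1})^{1/(m+1)}$ on $\Gamma=\Gamma_+$, and $\beta=1-\alpha$, so Theorem~\ref{t1.2} applies verbatim; your worry that ``$u^{-m}$ destroys scale invariance'' is unfounded since Theorem~\ref{t1.2} was designed for exactly this $\alpha<0$ range. Second, your diagnosis that convexity is needed only to salvage the $C^2$ estimate is misplaced: convexity is needed both because $\Gamma=\Gamma_+$ is the natural cone for this $f$ and, crucially, to sign the boundary term $\langle T_lA^\nu(Y^T),Y^T\rangle$ in Kwong's formula, which is where the monotonicity actually comes from.
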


The rest of the paper is organized as follows. We first recall some notations and known results in Section 2 for later use. In Section 3, we establish the a priori estimates, which ensure the long time existence of these flows. In Section 4, we show the convergence of the flow (\ref{1.7}) in $\mR^{n+1}$ and $\mH^{n+1}$, and complete the proof of Theorem \ref{t1.2} and \ref{t1.3}. In section 5, we give the proof of Theorem \ref{t1.4} and \ref{t1.5}. The proof of Theorem \ref{t1.7} is given in Section 6. Finally in Section 7, we prove \ref{t1.6} in view of the monotone quantities in Theorem \ref{t1.4}, and as application, we give a summary of inequalities involving the weighted integral of $k$th elementary symmetric function.

\section{Preliminary}
\subsection{Intrinsic curvature}
We now state some general facts about hypersurfaces, especially those that can be written as graphs. The geometric quantities of ambient spaces will be denoted by $(\bar{g}_{\alpha\beta})$, $(\bar{R}_{\alpha\beta\gamma\delta})$ etc., where Greek indices range from $0$ to $n$. Quantities for $M$ will be denoted by $(g_{ij})$, $(R_{ijkl})$ etc., where Latin indices range from $1$ to $n$. In this section, we denote the ambient spaces by $\mK^{n+1}$, which means $\mR^{n+1}$ or $\mH^{n+1}$.

Let $\nabla$, $\bar\nabla$ and $D$ be the Levi-Civita connection of $g$, $\bar g$ and the Riemannian metric $e$ of $\mathbb S^n$  respectively. All indices appearing after the semicolon indicate covariant derivatives. The $(1,3)$-type Riemannian curvature tensor is defined by
\begin{equation}\label{2.1}
	R(U,Y)Z=\nabla_U\nabla_YZ-\nabla_Y\nabla_UZ-\nabla_{[U,Y]}Z,
\end{equation}
or with respect to a local frame $(e_i)$,
\begin{equation}\label{2.2}
	R(e_i,e_j)e_k={R_{ijk}}^{l}e_l,
\end{equation}
where we use the summation convention (and will henceforth do so). The coordinate expression of (\ref{2.1}), the so-called Ricci identities, read
\begin{equation}\label{2.3}
	Y_{;ij}^k-Y_{;ji}^k=-{R_{ijm}}^kY^m
\end{equation}
for all vector fields $Y=(Y^k)$. We also denote the $(0,4)$ version of the curvature tensor by $R$,
\begin{equation}\label{2.4}
	R(W,U,Y,Z)=g(R(W,U)Y,Z).
\end{equation}
\subsection{Extrinsic curvature}
The induced geometry of $M$ is governed by the following relations. The second fundamental form $h=(h_{ij})$ is given by the Gaussian formula
\begin{equation}\label{2.5}
	\bar\nabla_ZY=\nabla_ZY-h(Z,Y)\nu,
\end{equation}
where $\nu$ is a local outer unit normal field. Note that here (and in the rest of the paper) we will abuse notation by disregarding the necessity to distinguish between a vector $Y\in T_pM$ and its push-forward $X_*Y\in T_p\mathbb{K}^{n+1}$. The Weingarten endomorphism $A=(h_j^i)$ is given by $h_j^i=g^{ki}h_{kj}$, and the Weingarten equation
\begin{equation}\label{2.6}
	\bar\nabla_Y\nu=A(Y),
\end{equation}
holds there, or in coordinates
\begin{equation}\label{2.7}
	\nu_{;i}^\alpha=h_i^kX_{;k}^\alpha.
\end{equation}
We also have the Codazzi equation in $\mathbb{K}^{n+1}$
\begin{equation}\label{2.8}
	\nabla_Wh(Y,Z)-\nabla_Zh(Y,W)=-\bar{R}(\nu,Y,Z,W)=0
\end{equation}
or
\begin{equation}\label{2.9}
	h_{ij;k}-h_{ik;j}=-\bar R_{\alpha\beta\gamma\delta}\nu^\alpha X_{;i}^\beta X_{;j}^\gamma X_{;k}^\delta=0,
\end{equation}
and the Gauss equation
\begin{equation}\label{2.10}
	R(W,U,Y,Z)=\bar{R}(W,U,Y,Z)+h(W,Z)h(U,Y)-h(W,Y)h(U,Z)
\end{equation}
or
\begin{equation}\label{2.11}
	R_{ijkl}=\bar{R}_{\alpha\beta\gamma\delta}X_{;i}^\alpha X_{;j}^\beta X_{;k}^\gamma X_{;l}^\delta+h_{il}h_{jk}-h_{ik}h_{jl},
\end{equation}
where
\begin{equation}\label{2.12}
	\bar{R}_{\alpha\beta\gamma\delta}=-K(\bar{g}_{\alpha\gamma}\bar{g}_{\beta\delta}-\bar{g}_{\alpha\delta}\bar{g}_{\beta\gamma}),
\end{equation}
and
\begin{equation*}
	K=\begin{cases}
		-1 &\text{   in  } \mH^{n+1},\\
		0 &\text{   in  } \mR^{n+1}.
	\end{cases}
\end{equation*}
\subsection{Hypersurface in $\mathbb{K}^{n+1}$}
It is known that the space form can be viewed as Euclidean space $\mathbb{R}^{n+1}$ equipped with a metric tensor, i.e., $\mathbb{K}^{n+1}=(\mathbb{R}^{n+1},ds^2)$ with proper choice $ds^2$. More specifically, let $\mathbb S^n$ be the unit sphere in Euclidean space $\mathbb{R}^{n+1}$ with standard induced metric $dz^2$, then
\begin{equation*}
	\bar{g}:=ds^2=d\rho^2+\phi^2(\rho)dz^2,
	\end{equation*}
where
\begin{equation*}
\phi(\rho)=
\begin{cases}
sinh(\rho)&\text{  in  } \mH^{n+1},\\
\rho &\text{   in  } \mR^{n+1},
\end{cases}
\end{equation*}
 $\rho\in[0,\infty)$. Consider the vector field $V=\phi(\rho)\frac{\p}{\p\rho}$ on $\mathbb{K}^{n+1}$. We know that $V$ is a conformal killing field. By \cite{GL}, we have the following lemma.
\begin{lemma}\label{l2.1}
	The vector field $V$ satisfies $\bar{\nabla}_XV=\phi'(\rho)X$.
\end{lemma}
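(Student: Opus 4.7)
The plan is to verify the identity $\bar\nabla_X V = \phi'(\rho)\, X$ by a direct computation using the warped-product structure of the metric $\bar g = d\rho^2 + \phi^2(\rho)\, dz^2$ on $\mK^{n+1}$. Because both sides of the claimed identity are $C^\infty$-linear in $X$, it suffices to check the formula separately when $X$ is the radial field $\partial_\rho$ and when $X = W$ is tangent to a level set $\{\rho = \mathrm{const}\}$. A general vector field is then handled by the decomposition $X = d\rho(X)\,\partial_\rho + W$, where $W$ is the projection onto the sphere factor.

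The main ingredient is the standard set of Koszul formulas for a warped product. From $\bar g = d\rho^2 + \phi^2(\rho)\, dz^2$ one reads off
\begin{equation*}
\bar\nabla_{\partial_\rho}\partial_\rho = 0, \qquad \bar\nabla_{\partial_\rho} W = \bar\nabla_W \partial_\rho = \frac{\phi'(\rho)}{\phi(\rho)}\, W
\end{equation*}
for every horizontal $W$; the tangential-plus-radial formula for $\bar\nabla_{W_1}W_2$ will not be needed. Applying the Leibniz rule to $V = \phi(\rho)\,\partial_\rho$ gives
\begin{equation*}
\bar\nabla_X V \;=\; X(\phi)\,\partial_\rho \;+\; \phi(\rho)\,\bar\nabla_X \partial_\rho.
\end{equation*}
Since $X(\phi) = \phi'(\rho)\, d\rho(X)$ and, by $C^\infty$-linearity,
\begin{equation*}
\bar\nabla_X \partial_\rho \;=\; d\rho(X)\,\bar\nabla_{\partial_\rho}\partial_\rho + \bar\nabla_W \partial_\rho \;=\; \frac{\phi'(\rho)}{\phi(\rho)}\,W,
\end{equation*}
the two contributions combine to
\begin{equation*}
\bar\nabla_X V \;=\; \phi'(\rho)\,d\rho(X)\,\partial_\rho + \phi'(\rho)\,W \;=\; \phi'(\rho)\,X,
\end{equation*}
which is the desired identity.

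There is no substantial obstacle: the argument uses only the warped-product form of the metric, so it applies uniformly to $\phi(\rho) = \rho$ in $\mR^{n+1}$ and to $\phi(\rho) = \sinh\rho$ in $\mH^{n+1}$ without any case split. As an invariant consistency check, one can observe that $V$ is the $\bar g$-gradient of the potential $\Phi(\rho) = \int_0^\rho \phi(s)\, ds$, whose Hessian is $\phi'(\rho)\,\bar g$; this exhibits $V$ as a closed conformal (in fact, homothetic on each level set) vector field and recovers the lemma intrinsically. The only thing to be careful about in the direct computation is the sign and placement of $\phi'/\phi$ in $\bar\nabla_W \partial_\rho$, but this is fixed by the Koszul formula applied to the single component $\bar g_{ij} = \phi^2 e_{ij}$ of the metric in the spherical directions.
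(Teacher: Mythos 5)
Your proof is correct. The paper itself does not prove Lemma \ref{l2.1}; it simply quotes the statement from Guan--Li \cite{GL}, so there is no ``paper proof'' to compare against. Your argument is the standard warped-product computation and supplies a self-contained verification of a fact the paper treats as known. The decomposition into radial and spherical parts is clean, the Koszul/Christoffel identities you invoke are the right ones ($\Gamma^i_{\rho j}=\tfrac{\phi'}{\phi}\delta^i_j$ gives $\bar\nabla_W\partial_\rho=\tfrac{\phi'}{\phi}W$, tensorial in $W$), and the Leibniz rule closes the computation.

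One small expository caveat: in your list of Koszul facts you write $\bar\nabla_{\partial_\rho}W=\bar\nabla_W\partial_\rho=\tfrac{\phi'}{\phi}W$ ``for every horizontal $W$.'' The second equality is tensorial in $W$ and hence valid pointwise for any horizontal vector, but the first, $\bar\nabla_{\partial_\rho}W=\tfrac{\phi'}{\phi}W$, is not tensorial in $W$ and only holds for coordinate (or $\partial_\rho$-Lie-constant) horizontal fields. This does not affect your proof, since you only ever use $\bar\nabla_W\partial_\rho$, but it would be cleaner to state only the formula you use. The invariant sanity check via $V=\bar\nabla\Phi$ with $\bar{\mathord{{\rm Hess}}}\,\Phi=\phi'\bar g$ is a nice addition and is in fact the conceptual reason the lemma holds uniformly across all the space forms.
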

We call the inner product $u:=<V,\nu>$ to be the support function of a hypersurface in $\mathbb{K}^{n+1}$, where $<\cdot,\cdot>=\bar{g}(\cdot,\cdot)$. Then we can derive the gradient and hessian of the support function $u$ under the induced metric $g$ on $M$.
\begin{lemma}\label{l2.2}
	The support function $u$ satisfies
	\begin{equation}\label{2.13}
		\begin{split}
		\nabla_iu=&g^{kl}h_{ik}\nabla_l\Phi,  \\
		 \nabla_i\nabla_ju=&g^{kl}\nabla_kh_{ij}\nabla_l\Phi+\phi'h_{ij}-(h^2)_{ij}u,
		 \end{split}
	\end{equation}
where $(h^2)_{ij}=g^{kl}h_{ik}h_{jl},$ and
\begin{equation*}
	\Phi(\rho)=\int_{0}^{\rho}\phi(r)dr=
	\begin{cases}
		cosh(\rho)&\text{  in  } \mH^{n+1},\\
		\frac{1}{2}\rho^2 &\text{   in  } \mR^{n+1}.
	\end{cases}
\end{equation*}
\end{lemma}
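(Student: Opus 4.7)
The plan is to express $V$ as the ambient gradient of $\Phi$, then use the Gauss--Weingarten formulas and Lemma \ref{l2.1} to compute the intrinsic derivatives of $u = \langle V, \nu\rangle$ on $M$.

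First I would observe that in the warped product metric $\bar g = d\rho^2 + \phi^2(\rho)\,dz^2$ one has $\bar\nabla \Phi = \Phi'(\rho)\,\partial_\rho = \phi(\rho)\,\partial_\rho = V$, since $|\partial_\rho|_{\bar g}^2 = 1$. Consequently, the intrinsic gradient $\nabla_l \Phi$ of the restricted function $\Phi|_M$ is precisely the tangential component of $V$, and we have the orthogonal decomposition $V = \nabla \Phi + u\nu$ along $M$. This identification is what connects $\nabla \Phi$ on the right-hand side of (\ref{2.13}) to the geometric data.

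For the first identity, I differentiate $u = \langle V, \nu\rangle$ along a tangent frame vector $X_{;i}$:
\begin{equation*}
\nabla_i u = \langle \bar\nabla_{X_{;i}} V, \nu\rangle + \langle V, \bar\nabla_{X_{;i}} \nu\rangle.
\end{equation*}
Lemma \ref{l2.1} gives $\bar\nabla_{X_{;i}} V = \phi'(\rho)\, X_{;i}$, which is tangent to $M$, so the first term vanishes. The Weingarten equation (\ref{2.7}) gives $\bar\nabla_{X_{;i}}\nu = h_i^k X_{;k}$, so the second term becomes $h_i^k \langle V, X_{;k}\rangle = h_i^k \nabla_k \Phi = g^{kl}h_{ik}\nabla_l\Phi$, which is the first formula.

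For the Hessian I would take a further covariant derivative of $\nabla_i u = h_i^k \nabla_k \Phi$, splitting into $\nabla_j h_i^k \cdot \nabla_k\Phi + h_i^k\,\nabla_j\nabla_k\Phi$. The Codazzi equation (\ref{2.9}) in the space form $\mathbb{K}^{n+1}$ — which is free of any ambient curvature correction since $\bar R(\nu, X_{;i}, X_{;j}, X_{;k}) = 0$ by (\ref{2.12}) — allows me to rewrite $\nabla_j h_{ik}$ as $\nabla_k h_{ij}$, delivering the first term $g^{kl}\nabla_k h_{ij}\nabla_l\Phi$. For the remaining piece I compute the intrinsic Hessian of $\Phi|_M$ from the Gauss formula (\ref{2.5}): since $\bar\nabla^2 \Phi(X,Y) = \langle \bar\nabla_X V, Y\rangle = \phi'(\rho)\langle X, Y\rangle$ by Lemma \ref{l2.1}, the relation $\bar\nabla_X Y = \nabla_X Y - h(X,Y)\nu$ together with $\langle V, \nu\rangle = u$ yields $\nabla_j\nabla_k \Phi = \phi'\, g_{jk} - u\, h_{jk}$. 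Contracting with $h_i^k$ produces $\phi'\, h_{ij} - u\,(h^2)_{ij}$ and completes the second identity.

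The calculation is essentially bookkeeping; the only genuine subtlety is the use of Codazzi without an ambient curvature correction term, which is justified here because both $\mathbb{R}^{n+1}$ and $\mathbb{H}^{n+1}$ have constant sectional curvature so the normal component of $\bar R(\cdot,\cdot)\cdot$ along tangent vectors vanishes as recorded in (\ref{2.12})--(\ref{2.9}).
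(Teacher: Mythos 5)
Your proof is correct and follows the standard argument: identify $V=\bar\nabla\Phi$, split $V$ into tangential part $\nabla\Phi$ plus $u\nu$, differentiate $u=\langle V,\nu\rangle$ via Lemma \ref{l2.1} and the Weingarten equation for the gradient, then apply Codazzi (valid without correction in a space form) and the Gauss-formula computation $\nabla_j\nabla_k\Phi=\phi'g_{jk}-uh_{jk}$ for the Hessian. The paper itself does not present a proof but defers to \cite{GL,BLO,JL}; the computation you give is essentially the one found in those references, so there is nothing further to compare.
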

The proof of Lemma \ref{l2.2} can be seen in \cite{GL,BLO,JL}.
\subsection{Graphs in $\mathbb{K}^{n+1}$}
Let $(M,g)$ be a hypersurface in $\mathbb{K}^{n+1}$ with induced metric $g$. We now give the local expressions of the induced metric, second fundamental form, Weingarten curvatures etc when $M$ is a graph of a smooth and positive function $\rho(z)$ on $\mathbb{S}^n$. Let $\p_1,\cdots,\p_n$ be a local frame along $M$ and $\p_\rho$ be the vector field along radial direction. Then the support function, induced metric, inverse metric matrix, second fundamental form can be expressed as follows (\cite{GL}).
\begin{align*}
	u &= \frac{\phi^2}{\sqrt{\phi^2+|D\rho|^2}},\;\; \nu=\frac{1}{\sqrt{1+\phi^{-2}|D\rho|^2}}(\frac{\p}{\p\rho}-\phi^{-2}\rho_i\frac{\p}{\p x_i}),   \\
	g_{ij} &= \phi^2e_{ij}+\rho_i\rho_j,  \;\;   g^{ij}=\frac{1}{\phi^2}(e^{ij}-\frac{\rho^i\rho^j}{\phi^2+|D\rho|^2}),\\
	h_{ij} &=\(\sqrt{\phi^2+|D\rho|^2}\)^{-1}(-\phi D_iD_j\rho+2\phi'\rho_i\rho_j+\phi^2\phi'e_{ij}),\\
	h^i_j &=\frac{1}{\phi^2\sqrt{\phi^2+|D\rho|^2}}(e^{ik}-\frac{\rho^i\rho^k}{\phi^2+|D\rho|^2})(-\phi D_kD_j\rho+2\phi'\rho_k\rho_j+\phi^2\phi'e_{kj}),
	\end{align*}
where $e_{ij}$ is the standard spherical metric. It will be convenient if we introduce a new variable $\gamma$ satisfying $$\frac{d\gamma}{d\rho}=\frac{1}{\phi(\rho)}.$$
Let $\omega:=\sqrt{1+|D\gamma|^2}$, one can compute the unit outward normal $$\nu=\frac{1}{\omega}(1,-\frac{\gamma_1}{\phi},\cdots,-\frac{\gamma_n}{\phi})$$ and the general support function $u=<V,\nu>=\frac{\phi}{\omega}$. Moreover,
\begin{align}\label{2.14}
	g_{ij} &=\phi^2(e_{ij}+\gamma_i\gamma_j), \;\; g^{ij}=\frac{1}{\phi^2}(e^{ij}-\frac{\gamma^i\gamma^j}{\omega^2}),\notag\\
	h_{ij} &=\frac{\phi}{\omega}(-\gamma_{ij}+\phi'\gamma_i\gamma_j+\phi'e_{ij}),\notag\\
	h^i_j &=\frac{1}{\phi\omega}(e^{ik}-\frac{\gamma^i\gamma^k}{\omega^2})(-\gamma_{kj}+\phi'\gamma_k\gamma_j+\phi'e_{kj})\notag\\
	&=\frac{1}{\phi\omega}(\phi'\delta^i_j-(e^{ik}-\frac{\gamma^i\gamma^k}{\omega^2})\gamma_{kj}).
	\end{align}
Covariant differentiation with respect to the spherical metric is denoted by indices.

There is also a relation between the second fundamental form and the radial function on the hypersurface. Let $\widetilde{h}=\phi'\phi e$. Then
\begin{equation}\label{2.15}
	\omega^{-1}h=-\nn^2\rho+\widetilde{h}
\end{equation}
holds; cf. \cite{GC2}. Since the induced metric is given by
$$g_{ij}=\phi^2 e_{ij}+\rho_i\rho_j, $$
we obtain
\begin{equation}\label{2.16}
	\omega^{-1}h_{ij}=-\rho_{;ij}+\frac{\phi'}{\phi}g_{ij}-\frac{\phi'}{\phi}\rho_{i}\rho_{j}.
\end{equation}

We now consider the flow equation (\ref{1.7}) of radial graphs over $\mathbb{S}^n$ in $\mathbb{K}^{n+1}$. It is known (\cite{GC2}) if a closed hypersurface which is a radial graph and satisfies
$$\p_tX=\mathscr{F}\nu,$$
then the evolution of the scalar function $\rho=\rho(X(z,t),t)$ satisfies
$$\p_t\rho=\mathscr{F}\omega.$$
Thus we only need to consider the following parabolic initial value problem on $\mathbb{S}^n$,
\begin{equation}\label{2.17}
     \begin{cases}
     	\p_t\rho&=(u^\alpha f^{-\beta}-\eta u)\omega, \;\;(z,t)\in\mathbb{S}^n\times [0,\infty),\\
     	 \rho(\cdot,0)&=\rho_0,
     	\end{cases}
\end{equation}
   where $\rho_0$ is the radial function of the initial hypersurface.

   Equivalently, the equation for $\gamma$ satisfies
   \begin{equation}\label{2.18}
   	\p_t\gamma=(u^\alpha f^{-\beta}-\eta u)\frac{\omega}{\phi}.
   \end{equation}
Lastly, we can derive a connection between $\vert\nn\rho\vert$ and $\vert D\gamma\vert$.
\begin{lemma}\label{l2.3}
If $M$ is a star-shaped hypersurface, we can derive that $\vert\nn\rho\vert^2=1-\frac{1}{\omega^2}$.
\end{lemma}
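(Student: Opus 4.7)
The plan is to reduce both sides to expressions in the spherical quantity $|D\rho|^2$ and verify they agree. Since $\rho$ is a function on $\mathbb{S}^n$ transported to $M$ via the graph parameterization, its intrinsic gradient on $M$ has components $\rho_i = D_i\rho$ in the natural coordinates, so that $|\nabla \rho|_g^2 = g^{ij}\rho_i\rho_j$.

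First I would substitute the inverse metric formula from the subsection on graphs, namely
\begin{equation*}
g^{ij}=\frac{1}{\phi^{2}}\left(e^{ij}-\frac{\rho^{i}\rho^{j}}{\phi^{2}+|D\rho|^{2}}\right),
\end{equation*}
to get
\begin{equation*}
|\nabla\rho|_g^{2}=\frac{1}{\phi^{2}}\left(|D\rho|^{2}-\frac{|D\rho|^{4}}{\phi^{2}+|D\rho|^{2}}\right)=\frac{|D\rho|^{2}}{\phi^{2}+|D\rho|^{2}}.
\end{equation*}
Next I would use the defining relation $\frac{d\gamma}{d\rho}=\frac{1}{\phi(\rho)}$, which gives $D_i\gamma=\phi^{-1}D_i\rho$ by the chain rule, hence $|D\gamma|^{2}=\phi^{-2}|D\rho|^{2}$. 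Substituting into $\omega^{2}=1+|D\gamma|^{2}$ yields
\begin{equation*}
\omega^{2}=\frac{\phi^{2}+|D\rho|^{2}}{\phi^{2}}, \qquad 1-\frac{1}{\omega^{2}}=\frac{|D\rho|^{2}}{\phi^{2}+|D\rho|^{2}},
\end{equation*}
which matches the expression above for $|\nabla\rho|_g^{2}$.

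There is no genuine obstacle here; the only points requiring a little care are (a) making sure that the components of $\nabla\rho$ in the $\{\partial_i\}$ frame really are the spherical derivatives $D_i\rho$ (which follows because $\rho$ is defined on $\mathbb{S}^n$ and transported without change along the radial direction), and (b) verifying the chain-rule computation for $D\gamma$ in terms of $D\rho$, which is immediate since $\phi$ depends only on $\rho$. After those two checks the result is a direct algebraic identity, and no hypothesis beyond star-shapedness (which ensures the graph parameterization is well-defined) is needed.
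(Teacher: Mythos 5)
Your proof is correct and follows essentially the same route as the paper's: both start from $|\nabla\rho|_g^2 = g^{ij}\rho_i\rho_j$, substitute the inverse graph metric, and reduce to $1-\omega^{-2}$. The only cosmetic difference is that you simplify directly to $|D\rho|^2/(\phi^2+|D\rho|^2)$ while the paper passes through $|D\gamma|^2(1-|D\gamma|^2/\omega^2)$; these are algebraically identical via $\phi^2\omega^2 = \phi^2 + |D\rho|^2$.
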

\begin{proof}
We can derive this lemma directly via calculations.
\begin{equation}\label{2.19}
\begin{split}
\vert\nn\rho\vert^2&=g^{ij}\rho_{;i}\rho_{;j}\\
&=\frac{1}{\phi^2}(e^{ij}-\frac{\rho_i\rho_j}{\phi^2\omega^2})\rho_i\rho_j\\
&=\phi^{-2}(\vert D\rho\vert^2-\frac{\vert D\rho\vert^4}{\phi^2\omega^2})\\
&=\vert D\gamma\vert^2(1-\frac{\vert D\gamma\vert^2}{\omega^2})\\
&=1-\frac{1}{\omega^2}.
\end{split}
\end{equation}
	\end{proof}
\subsection{Elementary symmetric functions}
We review some properties of elementary symmetric functions. See \cite{HGC} for more details.

In Section 1 we give the definition of elementary symmetric functions. The definition can be extended to symmetric matrices. Let $A\in Sym(n)$ be an $n\times n$ symmetric matrix. Denote by $\k=\k(A)$ the eigenvalues of $A$. Set $p_m(A)=p_m(\k(A))$. We have
\begin{equation*}
p_m(A)=\frac{(n-m)!}{n!}\delta^{j_1\cdots j_m}_{i_1\cdots i_m}A_{i_1j_1}\cdots A_{i_mj_m}, \qquad m=1,\cdots,n.
\end{equation*}
\begin{lemma}\label{l2.4}
Denote $p_m^{ij}=\frac{\p p_m}{\p A_{ij}}$. Then we have
\begin{align}
p_m^{ij}A_{ij}&=mp_m,\\
p_m^{ij}\delta_j^i&=mp_{m-1},\\
p_m^{ij}(A^2)_{ij}&=np_1p_m-(n-m)p_{m+1},
\end{align}
where $(A^2)_{ij}=\sum_{k=1}^nA_{ik}A_{kj}$.
\end{lemma}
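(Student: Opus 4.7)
The three identities share a common starting point: the delta representation
\begin{equation*}
p_m(A)=\frac{(n-m)!}{n!}\delta^{j_1\cdots j_m}_{i_1\cdots i_m}A_{i_1j_1}\cdots A_{i_mj_m},
\end{equation*}
together with the consequence
\begin{equation*}
p_m^{ij}=\frac{(n-m)!}{n!}\,m\,\delta^{j\,j_2\cdots j_m}_{i\,i_2\cdots i_m}A_{i_2j_2}\cdots A_{i_mj_m},
\end{equation*}
obtained by differentiating and using antisymmetry of the generalised Kronecker delta to collect the $m$ equal terms arising from differentiating each factor.

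For (1) I would simply observe that $p_m$ is a polynomial in the entries $A_{ij}$ that is homogeneous of degree $m$, so Euler's identity for homogeneous functions gives $p_m^{ij}A_{ij}=mp_m$ immediately. For (2) I would contract $j=i$ in the displayed expression for $p_m^{ij}$ and sum, using the standard Kronecker-delta contraction $\delta^{i\,j_2\cdots j_m}_{i\,i_2\cdots i_m}=(n-m+1)\delta^{j_2\cdots j_m}_{i_2\cdots i_m}$; the combinatorial factor $m(n-m+1)(n-m)!/n!$ then combines with the defining formula of $p_{m-1}$ to produce $p_m^{ij}\delta^i_j=mp_{m-1}$.

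For (3) the cleanest path is to pass to an eigenbasis of the symmetric matrix $A$: there both $p_m^{ij}$ and $(A^2)_{ij}$ are diagonal, with diagonal entries $\partial p_m/\partial\kappa_i$ and $\kappa_i^2$ respectively, so the claim reduces to the scalar identity
\begin{equation*}
\sum_{i=1}^n\kappa_i^2\,\frac{\partial\sigma_m}{\partial\kappa_i}=\sigma_1\sigma_m-(m+1)\sigma_{m+1}
\end{equation*}
for the unnormalised elementary symmetric functions. To prove this I would use $\partial\sigma_m/\partial\kappa_i=\sigma_{m-1}(\kappa\vert i)$, absorb one factor of $\kappa_i$ via $\kappa_i\sigma_{m-1}(\kappa\vert i)=\sigma_m-\sigma_m(\kappa\vert i)$, sum over $i$, and recognise $\sum_i\kappa_i\sigma_m(\kappa\vert i)=(m+1)\sigma_{m+1}$ from the fact that each degree-$(m+1)$ monomial of $\sigma_{m+1}$ is produced exactly $m+1$ times in the sum. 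Converting back from $\sigma_k$ to $p_k$ via $\sigma_k=C_n^kp_k$ and simplifying the binomial ratios, the key cancellation being $(m+1)C_n^{m+1}/C_n^m=n-m$, yields $p_m^{ij}(A^2)_{ij}=np_1p_m-(n-m)p_{m+1}$.

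The main obstacle is the bookkeeping in (3): the factor $-(n-m)$ on the right-hand side arises entirely from the conversion between normalised and unnormalised elementary symmetric functions, so one has to track the binomial coefficients carefully. Identities (1) and (2) involve no such subtlety and fall out of general linear-algebraic facts about homogeneous polynomials and generalised Kronecker deltas.
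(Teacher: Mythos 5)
Your proof is correct, and in fact the paper does not supply one for Lemma~\ref{l2.4}: the lemma is stated without argument, with the whole of Section~2.5 simply deferring to the reference \cite{HGC}, so there is no ``paper's proof'' against which to measure your route. Each step you give is sound. Identity (1) is Euler's relation for the degree-$m$ homogeneous polynomial $p_m$ in the entries of $A$. For (2), differentiating the delta representation gives
\begin{equation*}
p_m^{ij}=\frac{(n-m)!}{n!}\,m\,\delta^{j\,j_2\cdots j_m}_{i\,i_2\cdots i_m}A_{i_2j_2}\cdots A_{i_mj_m},
\end{equation*}
and the contraction $\delta^{i\,j_2\cdots j_m}_{i\,i_2\cdots i_m}=(n-m+1)\,\delta^{j_2\cdots j_m}_{i_2\cdots i_m}$ turns the prefactor into $m(n-m+1)(n-m)!/n!=m(n-m+1)!/n!$, which is exactly $m$ times the normalization constant in the delta formula for $p_{m-1}$. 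For (3), passing to the eigenbasis is legitimate: $p_m^{ij}A_{ij}$, $p_m^{ij}\delta_{ij}$, and $p_m^{ij}(A^2)_{ij}$ are orthogonal invariants, and at a diagonal $A$ the tensor $p_m^{ij}$ is diagonal with entries $\partial p_m/\partial\k_i$, while $(A^2)_{ij}$ is diagonal with entries $\k_i^2$. The resulting scalar identity $\sum_i\k_i^2\,\partial\sigma_m/\partial\k_i=\sigma_1\sigma_m-(m+1)\sigma_{m+1}$ is the last line of the paper's own Proposition~\ref{p2.6} (with $m=k+1$), and your derivation from $\k_i\sigma_{m-1}(\k\vert i)=\sigma_m-\sigma_m(\k\vert i)$ together with $\sum_i\k_i\sigma_m(\k\vert i)=(m+1)\sigma_{m+1}$ is correct, as is the final conversion via $(m+1)C_n^{m+1}/C_n^m=n-m$.
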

\begin{lemma}\label{l2.5}
If $\k\in\Gamma_m^+=\{x\in\mR^n:p_i(x)>0, i=1,\cdots,m\}$, we have the following Newton-MacLaurin inequalities.
\begin{align}
p_{m+1}(\k)p_{k-1}(\k)&\le p_k(\k)p_m(\k),\\
p_1\ge p_2^{\frac{1}{2}}\ge\cdots&\ge p_m^{\frac{1}{m}},\qquad 1\le k\le m.
\end{align}
Equality holds if and only if $\k_1=\cdots=\k_n$.
\end{lemma}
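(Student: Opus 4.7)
The plan is to reduce everything to the fundamental Newton inequality
$$p_k(\kappa)^2 \ge p_{k-1}(\kappa)\, p_{k+1}(\kappa) \qquad \text{on } \Gamma_{k+1}^+,$$
and then derive both claimed inequalities from it by simple algebraic manipulations. The key tool for Newton's inequality is the polynomial $P(t)=\prod_{i=1}^n(t+\kappa_i)=\sum_{j=0}^n \binom{n}{j} p_j(\kappa)\, t^{n-j}$. When all $\kappa_i$ are real, $P$ has only real roots, and by repeated application of Rolle's theorem so do its derivatives. Differentiating $P$ exactly $n-k-1$ times produces a polynomial of degree $k+1$ with only real roots; evaluating its discriminant condition at the appropriate coefficients yields $p_k^2\ge p_{k-1}p_{k+1}$ directly. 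To extend this from $\Gamma_+$ to the broader cone $\Gamma_{k+1}^+$ (where some $\kappa_i$ may be negative), I would invoke G{\aa}rding's theory of hyperbolic polynomials: $p_{k+1}$ is hyperbolic with respect to $(1,\dots,1)$, and each $\Gamma_{k+1}^+$ is the connected component of the positivity set containing $(1,\dots,1)$, so the Newton inequality propagates.

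For the first claimed inequality $p_{m+1}p_{k-1}\le p_k p_m$, the plan is to observe that the Newton inequality is equivalent to
$$\frac{p_{k+1}}{p_k}\le \frac{p_k}{p_{k-1}} \qquad \text{whenever both denominators are positive},$$
so on $\Gamma_m^+$ the sequence of ratios $p_{j+1}/p_j$ is nonincreasing in $j$. Chaining this for $j=k,k+1,\dots,m$ gives
$$\frac{p_{m+1}}{p_m}\le \frac{p_k}{p_{k-1}},$$
which is exactly the claim. For the MacLaurin inequality, I would proceed by induction on $k$. Assuming $p_{k-1}^{1/(k-1)}\ge p_k^{1/k}$, combine with $p_k^2\ge p_{k-1}p_{k+1}$: raise the first to power $k(k-1)$, substitute, and rearrange to produce $p_k^{1/k}\ge p_{k+1}^{1/(k+1)}$. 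The base case $p_1\ge p_2^{1/2}$ is Newton's inequality with $k=1$ (using $p_0=1$).

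For the equality cases, the discriminant computation in Newton's inequality shows equality forces the derivative polynomial to have a double root, and then G{\aa}rding's characterization of equality for hyperbolic polynomials forces all $\kappa_i$ to coincide. Equality in the chain for the first inequality forces Newton equality at every intermediate step, hence again $\kappa_1=\cdots=\kappa_n$; similarly for the MacLaurin chain.

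The main obstacle is handling the cone $\Gamma_m^+$ rather than the positive cone $\Gamma_+$: on the positive cone the polynomial/discriminant proof is essentially classical, but on $\Gamma_m^+$ one loses the uniform positivity of all $\kappa_i$, and the cleanest repair is to quote G{\aa}rding's theorem on hyperbolic polynomials and their connected positivity cones, after which both the inequalities and the equality cases follow. Since this lemma is standard, I would in practice simply cite \cite{HGC} rather than expand the above into a full proof in the paper.
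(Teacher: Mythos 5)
The paper itself offers no proof of this lemma: it is stated at the end of a ``Preliminary'' subsection that opens with ``See \cite{HGC} for more details,'' so citing \cite{HGC} is exactly what the authors do. Your concluding remark that you would simply cite \cite{HGC} therefore matches the paper's own treatment, and the remainder of your sketch is a reasonable reconstruction of the standard argument.

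One point in the sketch is off, though it is not fatal. You claim that the Rolle/derivative proof of $p_k^2\ge p_{k-1}p_{k+1}$ works only on $\Gamma_+$ and must be ``propagated'' to $\Gamma_{k+1}^+$ via G{\aa}rding's theory of hyperbolic polynomials. In fact Newton's inequality $p_k^2\ge p_{k-1}p_{k+1}$ holds for \emph{every} real tuple $\kappa\in\mathbb R^n$ with no cone restriction at all: the polynomial $\prod_i(t+\kappa_i)$ always has only real roots, so every derivative does as well, and the discriminant computation goes through unconditionally. The cone $\Gamma_m^+$ is not needed to make Newton's inequality true; it is needed to make the \emph{statements} of the lemma meaningful and nontrivial, namely to guarantee that the denominators $p_{k-1},\dots,p_m$ in your ratio chain are positive and that the fractional powers $p_j^{1/j}$ in MacLaurin's chain are real and positive. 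With that understood, your chain argument for $p_{m+1}p_{k-1}\le p_kp_m$ should also dispose of the sign of $p_{m+1}$: when $p_{m+1}\le 0$ the inequality is immediate since $p_{k-1}p_kp_m>0$, and when $p_{m+1}>0$ all of $p_0,\dots,p_{m+1}$ are positive and the monotone-ratio chain applies. The induction for the MacLaurin inequalities is correct as written. For the equality cases it is cleaner to note that equality in the chain forces equality in Newton's inequality at an intermediate step with all adjacent $p_j>0$, which already forces $\kappa_1=\cdots=\kappa_n$; no appeal to hyperbolic-polynomial machinery is required.
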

Let us denote by $\sigma_{k,i}(\k)$ the sum of the terms of $\sigma_k(\k)$ not containing the factor $\k_i$. Then the following identities hold.
\begin{proposition}\label{p2.6}
\cite{HGC} We have, for any $k=0,\cdots,n$, $i=1,\cdots,n$ and $\k\in\mR^n$,
\begin{align}
\frac{\p\sigma_{k+1}}{\p\k_i}(\k)&=\sigma_{k,i}(\k),\\
\sigma_{k+1}(\k)&=\sigma_{k+1,i}(\k)+\k_i\sigma_{k,i}(\k),\\
\sum_{i=1}^n\sigma_{k,i}(\k)&=(n-k)\sigma_k(\k),\\
\sum_{i=1}^n\k_i\sigma_{k,i}(\k)&=(k+1)\sigma_{k+1}(\k),\\
\sum_{i=1}^n\k_i^2\sigma_{k,i}(\k)&=\sigma_1(\k)\sigma_{k+1}(\k)-(k+2)\sigma_{k+2}(\k).
\end{align}
\end{proposition}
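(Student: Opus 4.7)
The plan is to prove the five identities in the order listed, since each one either follows from direct combinatorial decomposition of the monomials of $\sigma_k$ or is a consequence of one already proved. The key observation underlying everything is the partition of monomials of $\sigma_k$ into those that contain the variable $\k_i$ and those that do not; here $\sigma_{k,i}$ denotes exactly the latter sum, so by definition $\sigma_{k,i}(\k) = \sigma_k(\k_1,\ldots,\hat{\k_i},\ldots,\k_n)$.

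For identity (2.30), I would split $\sigma_{k+1}(\k)$ into the sum of monomials containing $\k_i$ and the sum of those not containing $\k_i$; the former equals $\k_i \sigma_{k,i}(\k)$, and differentiating with respect to $\k_i$ picks out $\sigma_{k,i}(\k)$. The exact same decomposition gives (2.31) without any differentiation. Identity (2.32) follows by double counting: every $k$-subset $\{i_1,\ldots,i_k\}$ appearing in $\sigma_k(\k)$ contributes to $\sigma_{k,i}(\k)$ for precisely the $n-k$ indices $i \notin \{i_1,\ldots,i_k\}$, so summing over $i$ counts each term of $\sigma_k$ exactly $n-k$ times.

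Identity (2.33) is a direct application of Euler's formula for homogeneous functions: since $\sigma_{k+1}$ is homogeneous of degree $k+1$, using (2.30) we obtain
\begin{equation*}
\sum_{i=1}^n \k_i \sigma_{k,i}(\k) = \sum_{i=1}^n \k_i \frac{\partial \sigma_{k+1}}{\partial \k_i}(\k) = (k+1)\sigma_{k+1}(\k).
\end{equation*}
Finally, for identity (2.34) I would multiply (2.31) by $\k_i$ to get $\k_i^2\sigma_{k,i} = \k_i\sigma_{k+1} - \k_i\sigma_{k+1,i}$, then sum over $i$ and apply (2.33) with $k$ replaced by $k+1$ to rewrite $\sum_i \k_i \sigma_{k+1,i} = (k+2)\sigma_{k+2}$, yielding the desired relation $\sigma_1 \sigma_{k+1} - (k+2)\sigma_{k+2}$.

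There is no genuine obstacle here; these are classical bookkeeping identities and the only thing to be careful about is ensuring the boundary conventions $\sigma_0 = 1$ and $\sigma_m = 0$ for $m > n$ (or $m < 0$) are consistent so that the formulas remain valid at the extreme values $k = 0$ and $k = n$. The potentially delicate case to double-check is identity (2.34) when $k+2 > n$, where $\sigma_{k+2}$ vanishes and one must verify both sides reduce correctly.
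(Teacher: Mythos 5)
Your proof is correct, and there is nothing to compare it against: the paper states Proposition 2.6 as a citation to \cite{HGC} (Huisken--Sinestrari) without reproducing a proof, so your argument supplies what the paper delegates to the reference. The approach you take is the standard one, and each step checks out: the decomposition $\sigma_{k+1}=\sigma_{k+1,i}+\k_i\sigma_{k,i}$ gives (2.30) and (2.31) directly; the double-count over complementary indices gives (2.32); Euler's relation applied to the degree-$(k+1)$ homogeneous function $\sigma_{k+1}$ together with (2.30) gives (2.33); and multiplying (2.31) by $\k_i$, summing, and invoking (2.33) with $k$ shifted to $k+1$ gives (2.34). Your caution about the degenerate range is warranted but harmless: with the convention $\sigma_m\equiv 0$ for $m>n$, at $k=n-1$ one has $\sigma_{n-1,i}=\prod_{j\neq i}\k_j$ so both sides of (2.34) equal $\sigma_1\sigma_n$, and at $k=n$ both sides vanish since $\sigma_{n,i}=0$ for every $i$. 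So all five identities hold for $k=0,\dots,n$ as claimed.
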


\section{A Priori Estimates}
In this section, we establish the priori estimates and show that the flow exists for long time.
For convenience, we denote that $\Psi=u^\a,G=f^{-\beta}$, then the equation (\ref{1.1}) can be written in the following form
$$\frac{\p X}{\p t}=u^\alpha f^{-\beta}(x,t) \nu(x,t)=\Psi G\nu.$$

We first show the $C^0$-estiamte of the solution to (\ref{2.17}).

\begin{lemma}\label{l3.1}
Let $\rho(x,t)$, $t\in[0,T)$, be a smooth, star-shaped solution to (\ref{2.17}). If (\romannumeral1) $0<\beta<1-\alpha$ in $\mH^{n+1}$ or (\romannumeral2) $0<\beta\le1-\alpha$ in $\mR^{n+1}$, then there is a positive constant $C_1$ depending only on $\alpha,\beta$ and the lower and upper bounds of $\rho(\cdot,0)$ such that
\begin{equation*}
\frac{1}{C_1}\leq \rho(\cdot,t)\leq C_1.
\end{equation*}
\end{lemma}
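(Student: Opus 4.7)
The plan is to apply the maximum principle directly to the scalar equation (\ref{2.17}) satisfied by the radial function $\rho$ on $\mS^n$, and to compare $\rho_{\max}(t):=\max_{\mS^n}\rho(\cdot,t)$ and $\rho_{\min}(t):=\min_{\mS^n}\rho(\cdot,t)$ against the concentric sphere ODE implicit in (\ref{2.17}). Because the equilibrium radius is fixed by the normalization $f^{-\beta}(1,\ldots,1)=\eta$, I expect two-sided bounds on $\rho$ in terms of the initial data and the ambient geometry.

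At a spatial maximum $x_0$ of $\rho(\cdot,t)$ one has $D\rho=0$ and $D^2\rho\le0$, equivalently $D\gamma=0$ and $D^2\gamma\le0$ via $d\gamma/d\rho=1/\phi$; in particular $\omega(x_0,t)=1$ and $u(x_0,t)=\phi(\rho_{\max})$. Substituting into the Weingarten formula in (\ref{2.14}) gives
\begin{equation*}
h^i_j(x_0,t)=\frac{1}{\phi}\bigl(\phi'\delta^i_j-\gamma^i_j\bigr)\ge\frac{\phi'(\rho_{\max})}{\phi(\rho_{\max})}\delta^i_j,
\end{equation*}
so every principal curvature satisfies $\kappa_i\ge\phi'(\rho_{\max})/\phi(\rho_{\max})$ at $x_0$. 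The monotonicity $\p f/\p\kappa_i>0$ from Assumption \ref{a1.1}, the $1$-homogeneity of $f$, and $f^{-\beta}(1,\ldots,1)=\eta$ then yield $f^{-\beta}(\kappa)\le(\phi/\phi')^{\beta}\eta$, hence
\begin{equation*}
\bigl[(u^\alpha f^{-\beta}-\eta u)\omega\bigr](x_0,t)\le\eta\,\phi(\rho_{\max})\Bigl[\phi^{\alpha+\beta-1}(\rho_{\max})\bigl(\phi'(\rho_{\max})\bigr)^{-\beta}-1\Bigr].
\end{equation*}
A symmetric argument at a spatial minimum, where $D^2\gamma\ge0$ forces $\kappa_i\le\phi'/\phi$, reverses every inequality. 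Setting $G(r):=\eta\,\phi(r)\bigl[\phi^{\alpha+\beta-1}(r)(\phi'(r))^{-\beta}-1\bigr]$, the maximum principle produces the scalar differential inequalities
\begin{equation*}
\frac{d}{dt}\rho_{\max}(t)\le G(\rho_{\max}(t)),\qquad\frac{d}{dt}\rho_{\min}(t)\ge G(\rho_{\min}(t)).
\end{equation*}

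It remains to analyze the sign of $G$ in each ambient. In $\mR^{n+1}$, where $\phi(r)=r$, one has $G(r)=\eta r(r^{\alpha+\beta-1}-1)$, which under $\beta\le 1-\alpha$ is nonpositive for $r\ge1$ and nonnegative for $r\le1$, so ODE comparison confines $\rho(\cdot,t)$ to $[\min(\rho_{\min}(0),1),\max(\rho_{\max}(0),1)]$. In $\mH^{n+1}$, where $\phi(r)=\sinh r$, the function $G(r)$ blows up to $+\infty$ as $r\to0^+$ precisely because $\alpha+\beta-1<0$ strictly, while $G(r)\to-\infty$ as $r\to\infty$ since $\beta>0$ (equivalently $\alpha<1$) makes the factor $e^{(\alpha-1)r}$ inside the bracket decay; thus $G$ has a unique positive zero $r_\ast$, is positive on $(0,r_\ast)$ and negative on $(r_\ast,\infty)$, and ODE comparison again yields a uniform two-sided bound in terms of $r_\ast$ and the initial data. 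The main subtlety is precisely the gap between $\beta\le 1-\alpha$ (admissible in $\mR^{n+1}$) and $\beta<1-\alpha$ (required in $\mH^{n+1}$): in the Euclidean critical case $\alpha+\beta=1$ the bracket vanishes identically and $G\equiv0$ is harmless, whereas in the hyperbolic critical case $\cosh^{-\beta}r-1\sim-\beta r^2/2<0$ near $r=0$, so $G<0$ for small $r$ and the lower bound on $\rho_{\min}$ collapses, which is exactly why the strict inequality appears in the statement.
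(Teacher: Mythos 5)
Your proof is correct and follows essentially the same strategy as the paper: at spatial extrema of $\rho$ one has $\omega=1$ and $h^i_j\ge(\le)\frac{\phi'}{\phi}\delta^i_j$, whence monotonicity and $1$-homogeneity of $f$ give $F^{-\beta}\le(\ge)\eta(\phi'/\phi)^{-\beta}$, and the resulting scalar ODE inequalities for $\rho_{\max},\rho_{\min}$ are analyzed by sign. The only cosmetic difference is that you retain the sharper single comparison function $G(r)=\eta\phi(r)\bigl[\phi^{\alpha+\beta-1}(r)(\phi'(r))^{-\beta}-1\bigr]$ throughout, whereas the paper simplifies the upper bound in $\mH^{n+1}$ via $(\phi'/\phi)^{-\beta}\le1$ and introduces an auxiliary constant $C_2$ in the lower bound; your closing observation about why $\alpha+\beta=1$ must be excluded in $\mH^{n+1}$ coincides with the paper's remark following the lemma.
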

\begin{proof}
Let $\rho_{\max}(t)=\max_{z\in \mS^n}\rho(\cdot,t)=\rho(z_t,t)$. For fixed time $t$, at the point $z_t$, we have $$D_i\rho=0 \text{ and } D^2_{ij}\rho\leq0.$$
Note that $\omega=1$, $u=\frac{\phi}{\omega}=\phi$ and
\begin{equation}\label{3.1}
	\begin{split}
h^i_j &=\frac{1}{\phi^2\sqrt{\phi^2+|D\rho|^2}}(e^{ik}-\frac{\rho^i\rho^k}{\phi^2+|D\rho|^2})(-\phi D_kD_j\rho+2\phi'\rho_k\rho_j+\phi^2\phi'e_{kj})\\
&=-\phi^{-2}\rho_{ij}+\phi^{-1}\phi'e_{ij}.
\end{split}
\end{equation}
 (\romannumeral1) In $\mH^{n+1}$ case, at the point $z_t$,
 we have $F^{-\beta}(h^i_j)\leq\eta(\frac{\phi'}{\phi})^{-\beta}\leq\eta$ by $\frac{\phi'}{\phi}\geq1$. We denote
  \begin{equation}\label{3.2}
 	F([a_{ij}])=f(\mu_1,\cdots,\mu_n),
 \end{equation}
 where $\mu_1,\cdots,\mu_n$ are the eigenvalues of matrix $[a_{ij}]$. It is not difficult to see that the eigenvalues of $[F^{ij}]=[\frac{\p F}{\p a_{ij}}]$ are $\frac{\p f}{\p\mu_1},\cdots,\frac{\p f}{\p\mu_n}$.
 Thus
$$\frac{d}{dt}\rho_{\max}\leq\eta\phi(\phi^{\alpha-1}-1),$$
where $\phi=sinh\rho$. Hence there exists a constant $C_0$ such that $\rho_{\max}\leq \max\{C_0,\rho_{\max}(0)\}$.

Similarly, let $\rho_{\min}(t)=\min_{z\in \mS^n}\rho(\cdot,t)=\rho(z_t,t)$. By (\ref{3.1}), we have $F^{-\beta}(h^i_j)\geq\eta(\frac{\phi'}{\phi})^{-\beta}$. Then
\begin{equation*}
	\frac{d}{dt}\rho_{\min}\geq\eta\phi(C_2\phi^{\alpha+\beta-1}-1),
	\end{equation*}
where $C_2\le\phi'^{-\beta}(\rho_{\max})<1$. By $\alpha+\beta<1$ we have $\rho_{\min}\geq \min\{\frac{1}{C_0},\rho_{\min}(0)\}$.

 (\romannumeral2) In $\mR^{n+1}$ case, at the point $z_t$, we have $F^{-\beta}(h^i_j)\leq\eta(\frac{\phi'}{\phi})^{-\beta}$. Then
 $$\frac{d}{dt}\rho_{\max}\leq\eta\rho(\rho^{\alpha+\beta-1}-1),$$
 hence $\rho_{\max}\leq \max\{C_0,\rho_{\max}(0)\}$.
 Similarly, $$\frac{d}{dt}\rho_{\min}\geq\eta\rho(\rho^{\alpha+\beta-1}-1).$$
 By $\alpha+\beta\le1$ we have $\rho_{\min}\geq \min\{\frac{1}{C_0},\rho_{\min}(0)\}$.
\end{proof}
Remark: If $\alpha=1-\beta$ and the ambient space is the hyperbolic space $\mH^{n+1}$, we look at the flow (\ref{1.7}) with initial hypersurface $X(0)=S_{\rho_0}=\{X^0=\rho_0\}$. The geodesic spheres are totally umbilical and their second fundamental form is given by ${h^i_j}=coth\rho\delta^i_j$. Then the flow hypersurfaces $M(t)$ will be spheres with radii $\rho(t)$ satisfying the scalar curvature flow equation
$$\p_t\rho=\eta\phi(cosh^{-\beta}\rho-1)\leq0,$$
equality holds if and only if $\rho=0$. At this situation, the flow will contract and exist for finite time.

If $\a+\beta-1\le0$ and the ambient space is $\mS^{n+1}$, we also consider (\ref{1.7}) with $X(0)=S_{\rho_0}=\{X^0=\rho_0\}$.
We have ${h^i_j}=cot\rho\delta^i_j$. Then
$$\p_t\rho=\eta sin\rho(sin^{\a+\beta-1}\rho cos^{-\beta}\rho-1)>0.$$
 At this situation, the flow will expand to infinity. Similarly, if $\a+\beta-1>0$, the flow also don't have a good convergence.

Let $M(t)$ be a smooth family of closed hypersurfaces in $\mK^{n+1}$. Let $X(\cdot,t)$ denote a point on $M(t)$. In general, we have the following evolution property.
\begin{lemma}\label{l3.2}
	Let $M(t)$ be a smooth family of closed hypersurfaces in $\mK^{n+1}$ evolving along the flow$$\p_tX=\mathscr{F}\nu,$$
	where $\nu$ is the unit outward normal vector field and $\mathscr{F}$ is a function defined on $M(t)$. Then we have the following evolution equations.
	\begin{equation}\label{3.3}
		\begin{split}
			\p_tg_{ij}&=2\mathscr{F}h_{ij},\\
			\p_t\nu&=-\nn\mathscr{F},\\
			\p_td\mu_g&=\mathscr{F}Hd\mu_g,\\
			\p_th_{ij}&=-\nn_i\nn_j\mathscr{F}+\mathscr{F}(h^2)_{ij}-K\mathscr{F}g_{ij},\\
			\p_tu&=\phi'\mathscr{F}-<\nn \Phi,\nn\mathscr{F}>,
			\end{split}
	\end{equation}
where $d\mu_g$ is the volume element of the metric $g(t)$, $(h^2)_{ij}=h_i^kh_{kj}$.
\end{lemma}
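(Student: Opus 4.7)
The plan is to derive each of the five identities in (3.3) by direct computation, using $\p_t X = \mathscr{F}\nu$ together with the Gauss--Weingarten relations (2.5)--(2.7), the Ricci identity (2.3), and the space-form curvature formula (2.12). Throughout I will repeatedly use that $\p_t$ and $\p_i$ commute as coordinate vectors on $M^n\times[0,T)$, so that $\bar\nn_{\p_t X}X_{;i}=\bar\nn_i(\mathscr{F}\nu)$, and that the metric $\bar g$ is $\bar\nn$-parallel.

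For the first two identities and the volume form, the computations are short. Differentiating $g_{ij}=\bar g(X_{;i},X_{;j})$ in $t$ and using (2.7) with $\bar g(\nu,X_{;k})=0$ gives $\p_tg_{ij}=2\mathscr{F}h_{ij}$, and then Jacobi's formula applied to $\sqrt{\det g_{ij}}$ together with $g^{ij}h_{ij}=H$ yields $\p_t d\mu_g=\mathscr{F}H\,d\mu_g$. For $\nu$, differentiating $|\nu|^2=1$ shows that $\p_t\nu$ is tangent to $M$; pairing with $X_{;i}$ gives
\begin{equation*}
\bar g(\p_t\nu,X_{;i})=-\bar g(\nu,\bar\nn_i(\mathscr{F}\nu))=-\p_i\mathscr{F},
\end{equation*}
since $\bar\nn_i\nu=h_i^kX_{;k}$ is tangent by (2.7); hence $\p_t\nu=-\nn\mathscr{F}$.

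The evolution of $h_{ij}$ is the main calculation and the only step where the ambient curvature enters nontrivially. Starting from $h_{ij}=-\bar g(\bar\nn_iX_{;j},\nu)$, which follows from (2.5), I would differentiate in $t$, commute $\bar\nn_{\p_tX}$ past $\bar\nn_i$ using the definition of the ambient curvature tensor $\bar R$, and expand the mixed second covariant derivatives arising from $\bar\nn_i\bar\nn_j(\mathscr{F}\nu)$. This produces three distinct contributions: the Hessian term $-\nn_i\nn_j\mathscr{F}$, the quadratic term $\mathscr{F}(h^2)_{ij}$ coming from applying the Weingarten equation (2.7) twice to $\bar\nn_i\bar\nn_j\nu$, and an ambient curvature remainder
\begin{equation*}
\mathscr{F}\,\bar R(\nu,X_{;i},\nu,X_{;j})=-K\mathscr{F}g_{ij},
\end{equation*}
where the last equality uses (2.12). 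Collecting these gives the claimed formula. This is where the careful bookkeeping of signs must be done, and it is the main obstacle of the lemma.

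Finally, for the support function $u=\bar g(V,\nu)$, I would exploit Lemma \ref{l2.1} together with the identity $V=\bar\nn\Phi$, which holds because in the warped-product coordinates $\bar\nn\Phi=\Phi'(\rho)\bar\nn\rho=\phi\,\p_\rho=V$. Consequently the tangential component of $V$ along $M$ equals $\nn\Phi$, and using Lemma \ref{l2.1} with $\bar\nn_{\mathscr{F}\nu}V=\mathscr{F}\phi'\nu$,
\begin{equation*}
\p_t u=\bar g(\bar\nn_{\mathscr{F}\nu}V,\nu)+\bar g(V,\p_t\nu)=\mathscr{F}\phi'-\bar g(V,\nn\mathscr{F})=\phi'\mathscr{F}-\langle\nn\Phi,\nn\mathscr{F}\rangle,
\end{equation*}
which is the fifth identity. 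Once the $h_{ij}$ computation is carried out with the correct sign conventions, all remaining identities follow uniformly in the space-form parameter $K$.
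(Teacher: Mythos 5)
Your proposal is correct and carries out, with the right sign conventions, exactly the standard computation that the paper delegates to Huisken \cite{HG} without reproducing it. The key steps are all present: the commutator identity $\bar\nn_{\p_t}\bar\nn_i X_{;j}-\bar\nn_i\bar\nn_{\p_t}X_{;j}=\bar R(\mathscr{F}\nu,X_{;i})X_{;j}$, yielding the ambient curvature correction $\mathscr{F}\bar R(\nu,X_{;i},\nu,X_{;j})=-K\mathscr{F}g_{ij}$ via (2.12); the observation $V=\bar\nn\Phi$ so that the tangential part of $V$ is $\nn\Phi$; and the use of Lemma~\ref{l2.1} for $\bar\nn_{\mathscr{F}\nu}V=\mathscr{F}\phi'\nu$.
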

\begin{proof}
	Proof is standard, see for example, \cite{HG}.
\end{proof}

\begin{lemma}\label{l3.3}
 Let $0<\beta\leq1-\alpha$, and $X(\cdot,t)$ be the solution to the flow (\ref{1.7}) which encloses the origin for $t\in[0,T)$. Then there is a positive constant $C_3$ depending on the initial hypersurface and $\alpha,\beta$,  such that
$$\frac{1}{C_3}\leq u^{\alpha-1}F^{-\beta}\leq C_3.$$
\end{lemma}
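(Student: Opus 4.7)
The plan is to apply the parabolic maximum principle to the auxiliary quantity $W := u^{\alpha-1}F^{-\beta}$, which satisfies $\mathscr{F} = u(W-\eta)$ where $\mathscr{F} = u^\alpha F^{-\beta} - \eta u$ is the normal speed in (\ref{1.7}). Once I prove $\tfrac{1}{C_3} \le W \le C_3$, the lemma follows at once. Since $M_t$ encloses the origin, Lemma \ref{l3.1} together with positivity of $u$ yields $c \le u \le C$, so the content is really a two-sided bound on $F$.

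First I would compute $\p_t\log W = (\alpha-1)\,\frac{\p_t u}{u} - \beta\,\frac{\p_t F}{F}$ using the evolution formulas of Lemma \ref{l3.2}. Combining $\p_t u = \phi'\mathscr{F} - \langle\nn\Phi,\nn\mathscr{F}\rangle$ with $\p_t F = F^j_i\,\p_t h^i_j$ and the identity
\begin{equation*}
F^{ij}\nn_i\nn_j u \;=\; \langle \nn F,\nn\Phi\rangle + \phi' F - u\,F^{ij}(h^2)_{ij},
\end{equation*}
which follows from Lemma \ref{l2.2}, the Codazzi equation, and the homogeneity $F^{ij}h_{ij}=F$, I expect the evolution to take the form
\begin{equation*}
\p_t W \;=\; \beta u F^{-1}F^{ij}\nn_i\nn_j W + \langle B,\nn W\rangle + W\,\mathcal{R}(u,W,\phi,\phi'),
\end{equation*}
for some vector field $B$ and zeroth-order term $\mathcal{R}$. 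The crucial algebraic cancellation is that the quadratic curvature contribution $F^{ij}(h^2)_{ij}$ arises separately from $\p_t u$ (via $\nn^2 u$) and from $\p_t F$ (via the $-\mathscr{F}(h^2)^i_j$ piece of $\p_t h^i_j$), and after weighting by $\alpha-1$ and $-\beta$ and substituting $\mathscr{F}=u(W-\eta)$ it drops out, leaving only data involving $u$, $W$ and the warping function $\phi$.

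At a spatial maximum of $W$ attained at $(x_0,t_0)$, the gradient terms vanish and $F^{ij}\nn_i\nn_j W \le 0$, so the evolution reduces to the ODE $\tfrac{d}{dt}W_{\max} \le W_{\max}\,\mathcal{R}$. In the stated range $\alpha\le 0<\beta\le 1-\alpha$, I expect $\mathcal{R}$ to take the schematic form $\phi'\bigl(1 - c_1 W^{q_1}\bigr) + c_2(W^{q_2}-W)$ with positive exponents $q_1,q_2$ determined by $\alpha,\beta$; in particular $\mathcal{R}<0$ once $W_{\max}$ exceeds a constant depending only on the $C^0$-bounds of $u$ and on $\alpha,\beta$. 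This gives $W\le C_3$. The symmetric argument at a spatial minimum of $W$, using the lower bound $u\ge c>0$, yields the matching bound $W\ge 1/C_3$.

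The main obstacle is the sign bookkeeping of the reaction term $\mathcal{R}$, because the contributions from $\phi'\mathscr{F}$ in $\p_t u$ and from $-\mathscr{F}F^{ij}(h^2)_{ij}-K\mathscr{F}\,\mathrm{tr}(F^{ij})$ in $\p_t F$ have competing signs and depend sensitively on the ambient curvature $K$. In the Euclidean case $K=0$ the exponent $q_1$ comes out proportional to $\alpha+\beta-1$, so the hypothesis $\beta\le 1-\alpha$ is exactly what makes $\mathcal{R}$ coercive. In the hyperbolic case $K=-1$ the extra $-K\mathscr{F}\,\mathrm{tr}(F^{ij})$ term forces the strict inequality $\beta<1-\alpha$, in agreement with the restriction of Theorem \ref{t1.3} and the remark following Lemma \ref{l3.1}.
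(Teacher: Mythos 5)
Your plan is essentially the paper's: the paper also applies the maximum principle to $Q=u^{\alpha-1}F^{-\beta}$, computes the evolutions of $u$ and $F$ via Lemma \ref{l3.2} and Lemma \ref{l2.2}, and at a spatial critical point of $Q$ arrives (after exactly the cancellation of the $F^{ij}(h^2)_{ij}$ terms that you anticipate) at
\begin{equation*}
\p_t Q \;=\; \beta\frac{uQ}{F}\,F^{ij}\nn^i\nn_j Q \;-\; Q(Q-\eta)\Bigl((1-\alpha-\beta)\phi' - K\beta\,\frac{u}{F}\sum_i F^{ii}\Bigr),
\end{equation*}
from which the two-sided bound follows immediately because the bracket is nonnegative in the admissible range.

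Two details in your write-up are off, though neither breaks the plan. First, the reaction term does not involve power-law exponents $W^{q_1}$ with ``$q_1$ proportional to $\alpha+\beta-1$''; the whole point of the choice $Q=u^{\alpha-1}F^{-\beta}$ is that the $\alpha,\beta$ exponents cancel and $(1-\alpha-\beta)$ appears as a linear \emph{coefficient} multiplying $\phi'$, leaving a clean quadratic-in-$Q$ reaction $-Q(Q-\eta)\cdot[\cdot]$. In particular the threshold for coercivity is $Q>\eta$, not a constant depending on $C^0$-bounds of $u$. Second, your claim that the term $-K\mathscr{F}\,\mathrm{tr}(F^{ij})$ forces the strict inequality $\beta<1-\alpha$ in hyperbolic space is backwards: with $K=-1$ that term contributes $+\beta\frac{u}{F}\sum_i F^{ii}>0$ to the bracket, so it \emph{helps}, and Lemma \ref{l3.3} holds with $\beta\le 1-\alpha$ in $\mH^{n+1}$ just as stated. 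The strictness $\beta<1-\alpha$ in Theorem \ref{t1.3} is forced by the $C^0$ estimate (Lemma \ref{l3.1} and the remark after it), not by this lemma.
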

\begin{proof}
Consider the auxiliary function
$$Q=u^{\alpha-1}F^{-\beta}.$$
Then $Q=u^{\alpha-1}G$ and $G$ is homogenous of degree $-\beta$.
$$(u^{\alpha}G)_{ij}=(Qu)_{ij}=Q_{ij}u+Q_iu_j+Q_ju_i+Qu_{ij}.$$
In order to calculate the evolution equation of $Q$, we need to deduce the evolution equations of $u$ and $F$ first. We denote that $\bar{g}(\cdot,\cdot)=<\cdot,\cdot>$.

	\begin{align}\label{3.4}
	\frac{\p u}{\p t}&=\frac{\p}{\p t}<V,\nu>\notag\\
	&=<\bar{\nabla}_{\frac{\p x}{\p t}}V,\nu>+<V,\frac{\p \nu}{\p t}>\notag\\
	&=(Qu-\eta u)\phi'-<V,\nn(Qu-\eta u)>\notag\\
	&=u\phi'(Q-\eta)+\eta<V,\nn u>-Q<V,\nn u>-u<V,\nn Q>\notag\\
	&=u\phi'(Q-\eta)+(\eta-Q)<V,\nn u>-u<V,\nn Q>.
   	\end{align}

\begin{equation}\label{3.5}
	\begin{split}
		\frac{\p F}{\p t}=&F^{ij}\frac{\p h^i_j}{\p t}\\
		=&F^{ij}\(-\nn^i\nn_j(Qu-\eta u)+(Qu-\eta u)(h^2)^i_j-K(Qu-\eta u)\delta^i_{j}-2(Qu-\eta u)(h^2)^i_j\)\\
		=&F^{ij}\(-(u\nn^i\nn_jQ+2\nn^iQ\nn_ju+Q\nn^i\nn_ju-\eta\nn^i\nn_ju)-K(Qu-\eta u)\delta^i_{j}\\
		&-(Qu-\eta u)(h^2)^i_j\)\\
		=&F^{ij}\((\eta-Q)\nn^i\nn_ju-2\nn^iQ\nn_ju-u\nn^i\nn_jQ-u(Q-\eta)(K\delta^i_{j}+(h^2)^i_j)\).
			\end{split}
\end{equation}
At the point where $Q$ attains its spatial maximum or minimum, $\nn Q=0$. We use (\ref{2.13}), (\ref{3.4}) and (\ref{3.5}) to deduce
\begin{equation*}
\begin{split}
\p_tQ=&\p_t(u^{\alpha-1}G)\\
=&(\alpha-1)u^{\alpha-2}\frac{\p u}{\p t}G-\beta u^{\alpha-1}F^{-\beta-1}\frac{\p F}{\p t}\\
=&(\alpha-1)u^{\alpha-2}G(u\phi'(Q-\eta)+(\eta-Q)<V,\nn u>)\\
&-\beta u^{\alpha-1}\frac{G}{F}F^{ij}\((\eta-Q)\nn^i\nn_ju-u\nn^i\nn_jQ-u(Q-\eta)(K\delta^i_{j}+(h^2)^i_j)\)\\
=&(\alpha-1)\phi'Q(Q-\eta)+(\alpha-1)\frac{Q}{u}(\eta-Q)<V,\nn u>\\
&-\beta\frac{Q}{F}\(\phi'F+<V,\nn F>-uF^{ij}(h^2)^i_j\)(\eta-Q)+\beta\frac{uQ}{F}F^{ij}\nn^i\nn_jQ\\
&+\beta\frac{uQ}{F}F^{ij}(Q-\eta)(K\delta^i_{j}+(h^2)^i_j).
\end{split}
\end{equation*}
By $$<V,\nn Q>=(\alpha-1)\frac{Q}{u}<V,\nn u>-\beta\frac{Q}{F}<V,\nn F>,$$
we have
\begin{equation*}
	\begin{split}
\frac{\p Q}{\p t}=&\beta\frac{uQ}{F}F^{ij}\nn^i\nn_jQ+(\alpha+\beta-1)\phi'Q(Q-\eta)+K\beta\frac{uQ}{F}(Q-\eta)\sum_{i=1}^{n}F^{ii}\\
=&\beta\frac{uQ}{F}F^{ij}\nn^i\nn_jQ-Q(Q-\eta)\((1-\alpha-\beta)\phi'-K\beta\sum_{i=1}^{n}F^{ii}\frac{u}{F}\).		
\end{split}
	\end{equation*}

If $1-\alpha-\beta=0$ and $K=0$, the proof is completed by the strong maximum principle. If $K\ne0$ or $1-\alpha-\beta>0$, we get $(1-\alpha-\beta)\phi'-K\beta\sum_{i=1}^{n}F^{ii}\frac{u}{F}>0$. It means that the sign of the coefficient of the highest order term $Q^2$ is negative and the sign of the coefficient of the lower order term $Q$ is positive. Applying the maximum principle we know that $\frac{1}{C_3}\leq Q\leq C_3$, where $C_3$ is a positive constant depending on the initial hypersurface and $\alpha,\beta$.
\end{proof}

According to Lemma \ref{l3.3} and $u=\frac{\phi}{\omega}\leq\phi_{\max}\leq C_4$, we can get $f\geq \frac{1}{C_5}$. That is,
\begin{corollary}\label{c3.4}
Let $0<\beta\leq1-\alpha$, and $X(\cdot,t)$ be the solution to the flow (\ref{1.7}) which encloses the origin for $t\in[0,T)$. Then there is a positive constant $C_5$ depending on the initial hypersurface and $\alpha,\beta$,  such that
$$f\geq \frac{1}{C_5}.$$
	\end{corollary}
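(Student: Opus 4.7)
The plan is to read off the desired lower bound on $f$ directly from the upper bound on $Q=u^{\alpha-1}F^{-\beta}$ supplied by Lemma \ref{l3.3}, combined with the a priori $C^0$ bound on the radial function from Lemma \ref{l3.1}. The hypothesis $0<\beta\leq 1-\alpha$ guarantees that $1-\alpha\geq\beta>0$, which is what I will use to convert a pointwise bound on $u$ into a bound on the factor $u^{1-\alpha}$ with the correct sign.

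Concretely, I would proceed as follows. First, since $M_t$ is star-shaped and the radial function satisfies $\rho\leq C_1$ by Lemma \ref{l3.1}, the support function obeys
\[
u=\frac{\phi(\rho)}{\omega}\leq \phi(\rho)\leq \phi(C_1)=:C_4,
\]
where $\phi(\rho)=\rho$ in $\mathbb{R}^{n+1}$ and $\phi(\rho)=\sinh\rho$ in $\mathbb{H}^{n+1}$. Next, the upper bound in Lemma \ref{l3.3} reads $u^{\alpha-1}F^{-\beta}\leq C_3$, which I rewrite as
\[
F^{-\beta}\;\leq\; C_3\, u^{1-\alpha}.
\]
Because $1-\alpha\geq\beta>0$, the exponent $1-\alpha$ is positive, so the already established bound $u\leq C_4$ yields $u^{1-\alpha}\leq C_4^{1-\alpha}$. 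Combining these gives $F^{-\beta}\leq C_3 C_4^{1-\alpha}$, and raising to the $-1/\beta$ power (valid since $\beta>0$) produces
\[
f=F\;\geq\;\bigl(C_3 C_4^{1-\alpha}\bigr)^{-1/\beta}=:\frac{1}{C_5},
\]
which is the claimed bound.

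There is no real obstacle here: the corollary is algebraic manipulation on top of Lemma \ref{l3.3} and Lemma \ref{l3.1}, and the only place one needs to be careful is in checking that the sign of the exponent $1-\alpha$ makes the inequality $u^{1-\alpha}\leq C_4^{1-\alpha}$ go in the right direction, which is exactly what the standing hypothesis $0<\beta\leq 1-\alpha$ ensures.
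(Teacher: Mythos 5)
Your proof is correct and matches the paper's own one-line argument: the paper likewise derives $f\geq 1/C_5$ directly from the upper bound $Q=u^{\alpha-1}F^{-\beta}\leq C_3$ in Lemma \ref{l3.3} together with $u=\phi/\omega\leq\phi_{\max}\leq C_4$, the latter coming from the $C^0$ bound on $\rho$ in Lemma \ref{l3.1}. You simply make explicit the algebra and the role of the sign of $1-\alpha$, which the paper leaves implicit.
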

We would like to get the upper bound of $|D\gamma|$ to show that $u$ has a lower bound.

\begin{lemma}\label{l3.5}
Let $0<\beta\leq1-\alpha$, and $X(\cdot,t)$ be the solution to the flow (\ref{1.7}) which encloses the origin for $t\in[0,T)$. Then there is a positive constant $C_6$ depending on the initial hypersurface and $\alpha,\beta$,  such that
$$\vert D \g\vert\leq C_6.$$
\end{lemma}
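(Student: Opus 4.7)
Since $u=\phi/\omega$ and $\phi(\rho)$ is pinched between positive constants by Lemma~\ref{l3.1}, the bound $|D\g|\le C_6$ is equivalent to a uniform positive lower bound on the support function $u$. The plan is to obtain such a bound by a parabolic maximum-principle argument on a suitable auxiliary function, exactly in the spirit of the $Q$-estimate in Lemma~\ref{l3.3}.

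A natural candidate is
\[
W \;:=\; -\log u + \lambda\,\Phi(\rho),
\]
with $\lambda>0$ to be chosen later (note that $\Phi$ is bounded, by Lemma~\ref{l3.1}). Using Lemma~\ref{l3.2} for $\p_t u$, Lemma~\ref{l2.2} for the spatial derivatives of $u$, the homogeneity identity $F^{ij}h_{ij}=f$, and Codazzi (\ref{2.9}) to rewrite $F^{ij}\nn^l\Phi\,\nn_l h_{ij}=\langle \nn\Phi,\nn f\rangle$, one arrives at a schematic evolution of the form
\[
\p_t W \;=\; \beta\,\frac{u}{f}\,F^{ij}\nn_i\nn_j W \;+\;\langle B,\nn W\rangle \;+\;\mathcal R,
\]
in which $B$ is a bounded vector field and the residual $\mathcal R$ is expressible only through $\phi,\phi',Q=u^{\a-1}f^{-\beta}$, the ratios $F^{ij}g_{ij}/f$ and $F^{ij}(h^2)_{ij}u/f$, and $\lambda$.

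At an interior spatial maximum of $W$ at time $t_0$, one has $\nn W=0$ (so $\nn u = \lambda u\,\nn\Phi$) and $F^{ij}\nn_i\nn_j W\le 0$. Substituting these, using the identity $\nn^2\Phi = -u\,h + \phi'\,g$ obtained from (\ref{2.16}), and invoking the uniform bounds of Lemma~\ref{l3.1}, Lemma~\ref{l3.3} and Corollary~\ref{c3.4}, the residual $\mathcal R$ decomposes into positive and negative pieces. Choosing $\lambda$ large (depending on $\a,\beta$ and the previously established constants), the negative contributions---chiefly $-\beta\lambda^2\frac{uQ}{f}F^{ij}\nn_i\Phi\,\nn_j\Phi$ and $-\beta\frac{uQ}{f}F^{ij}(h^2)_{ij}$---dominate, forcing $\p_t W\le 0$ whenever $u$ is sufficiently small. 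The parabolic maximum principle then yields $\max_{\mathbb S^n}W(\cdot,t)\le\max_{\mathbb S^n}W(\cdot,0)$, which gives $u\ge 1/C_6$ and hence $|D\g|\le C_6$.

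The main obstacle is verifying the precise sign balance inside $\mathcal R$. The positive pieces $\beta\phi' Q$ and $\lambda|\nn\Phi|^2(\a Q-\eta)$ must be absorbed by the dissipative contributions above, and this is exactly where the hypothesis $0<\beta\le 1-\a$ is needed, mirroring its role in Lemma~\ref{l3.3}. In the hyperbolic case, the extra term $-K\beta\frac{u}{f}F^{ij}g_{ij}$ with $K=-1$ has the favorable sign and only simplifies the analysis, so the same choice of $W$ works uniformly in both ambient spaces under consideration.
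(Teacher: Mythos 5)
Your reduction of the gradient bound to a positive lower bound on the support function $u$ is sound, and your approach is genuinely different from the paper's: the paper works on $\mathbb S^n$ with the graph function and shows $\p_t\bigl(\tfrac12|D\gamma|^2\bigr)_{\max}\le 0$ directly via the Ricci identity (equations (3.7)--(3.9)), whereas you work on $M_t$ with the auxiliary function $W=-\log u+\lambda\Phi$ and the operator $\mathcal L$ from the $C^2$-estimate. However, the sign analysis you propose is flipped in a way that matters. Using Lemma~\ref{l3.3} (boundedness of $Q=u^{\alpha-1}f^{-\beta}$) and Corollary~\ref{c3.4} (lower bound on $f$), the coefficient of your ``chief'' dissipative terms is $\Psi f^{-\beta-1}=u^\alpha f^{-\beta-1}=Qu/f$; since $Q$ is bounded while $f\ge cu^{-(1-\alpha)/\beta}\to\infty$ forcedly as $u\to 0$, one has $Qu/f\to 0$. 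So both $-\beta\lambda^2\tfrac{uQ}{f}F^{ij}\nn_i\Phi\nn_j\Phi$ and $-\beta\tfrac{uQ}{f}F^{ij}(h^2)_{ij}$ \emph{vanish} precisely in the regime ``$u$ small'' where you need them to dominate; they cannot be the mechanism.

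Conversely, the term you classify as a positive piece to be absorbed, $\lambda|\nn\Phi|^2(\alpha Q-\eta)$, is for $\alpha\le 0$ actually \emph{negative} and bounded away from zero (indeed $\alpha Q-\eta\le -\eta$ and $|\nn\Phi|^2=\phi^2(1-\omega^{-2})\to\phi^2\ge\phi_{\min}^2$ as $u\to 0$), and this is the genuine dissipative term that makes the argument close; the remaining positive contribution $\lambda(1+\beta-\alpha)u^2Q$ is $O(u^2)$ and can be absorbed by choosing $u$ small, independently of $\lambda$. With the roles of these terms corrected, your scheme can in fact be made to work, but it crucially needs $\alpha\le 0$, which is not part of the hypothesis of Lemma~\ref{l3.5} (the paper's proof requires only $\alpha+\beta\le 1$ and $\beta>0$), and your closing claim that only ``$0<\beta\le 1-\alpha$'' is needed is not correct for this route. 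As written, the proposal both misidentifies the dominant negative term and implicitly strengthens the hypotheses, so the ``sign balance'' you flag as the remaining obstacle is not just a bookkeeping issue but a real gap in the argument as stated.
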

\begin{proof}
Consider the auxiliary function $O=\frac{1}{2}\vert D\g\vert^2$. At the point where $O$ attains its spatial maximum, we have
\begin{gather*}
D\omega=0,\\
0=D_iO=\sum_{l}\g_{li}\g_{l},\\
0\geq D_{ij}^2O=\sum_{l}\g_{li}\g_{lj}+\sum_{l}\g_{l}\g_{lij}.
\end{gather*}
By (\ref{2.14}) and (\ref{2.18}), we deduce
\begin{equation}\label{3.6}
	\begin{split}
\p_t\gamma=&(u^\alpha F^{-\beta}-\eta u)\frac{\omega}{\phi}\\
=&\frac{\phi^{\a-1}}{\omega^{\a-1}}F^{-\beta}\(\frac{1}{\phi\omega}(\phi'\delta_{ij}-(e^{ik}-\frac{\gamma_i\gamma_k}{\omega^2})\gamma_{kj})\)-\eta\\
=&\frac{\phi^{\a+\beta-1}}{\omega^{\a-\beta-1}}G-\eta.
	\end{split}
\end{equation}
We remark that here $G=F^{-\beta}([\phi'\delta_{ij}-(e^{ik}-\frac{\gamma_i\gamma_k}{\omega^2})\gamma_{kj}])$ and $$G^{ij}=G^{ij}([\phi'\delta_{ij}-(e^{ik}-\frac{\gamma_i\gamma_k}{\omega^2})\gamma_{kj}]).$$
Due to (\ref{3.6}), we have
\begin{align}\label{3.7}
\p_tO_{\max}=&\sum\g_l\g_{tl}\notag\\
=&\g_l\(\frac{(\a+\beta-1)\phi^{\a+\beta-1}\phi'\g_l}{\omega^{\a-\beta-1}}G\notag\\
&+\frac{\phi^{\a+\beta-1}}{\omega^{\a-\beta-1}}G^{ij}(\phi''\phi\g_l\delta_{ij}-\g_{kjl}(e^{ik}-\frac{\gamma_i\gamma_k}{\omega^2})+\g_{kj}(\frac{\gamma_{il}\gamma_k}{\omega^2}+\frac{\gamma_i\gamma_{kl}}{\omega^2}))\)\notag\\
=&\frac{\phi^{\a+\beta-1}}{\omega^{\a-\beta-1}}\((\a+\beta-1)\phi'\vert D\g\vert^2G+\phi''\phi\vert D\g\vert^2\sum_{i=1}^{n}G^{ii}-G^{ij}\g_l\g_{kjl}(e^{ik}-\frac{\gamma_i\gamma_k}{\omega^2})\notag\\
&+G^{ij}\g_l\g_{kj}(\frac{\gamma_{il}\gamma_k}{\omega^2}+\frac{\gamma_i\gamma_{kl}}{\omega^2})\)\notag\\
=&\frac{\phi^{\a+\beta-1}}{\omega^{\a-\beta-1}}\((\a+\beta-1)\phi'\vert D\g\vert^2G+\phi''\phi\vert D\g\vert^2\sum_{i=1}^{n}G^{ii}\notag\\
&-G^{ij}\g_l\g_{kjl}(e^{ik}-\frac{\gamma_i\gamma_k}{\omega^2})\),
\end{align}
where we have used $\sum_{l}\g_{li}\g_{l}=0$ in the last step. By the Ricci identity,$$D_l\g_{ij}=D_j\g_{li}+\e_{il}\g_j-\e_{ij}\g_l,$$
we get
\begin{align}\label{3.8}
-G^{ij}\g_l\g_{kjl}(e^{ik}-\frac{\gamma_i\gamma_k}{\omega^2})=&-G^{ij}\g_l(\g_{lkj}+\g_j\e_{lk}-\g_l\e_{kj})(e^{ik}-\frac{\gamma_i\gamma_k}{\omega^2})\notag\\
\leq&-G^{ij}(-\g_{lk}\g_{lj}+\g_l\g_je_{lk}-\vert D\g\vert^2e_{kj})(e^{ik}-\frac{\gamma_i\gamma_k}{\omega^2})\notag\\
\leq&-G^{ij}(-\g_{li}\g_{lj}+\g_i\g_j-\vert D\g\vert^2\delta_{ij}).
\end{align}
Note that $[G^{ij}]$ is negative definite. According to $\phi''\phi+1=\phi'^2$, (\ref{3.7}) and (\ref{3.8}), we can derive
\begin{align}\label{3.9}
\p_tO_{\max}\leq&\frac{\phi^{\a+\beta-1}}{\omega^{\a-\beta-1}}\((\a+\beta-1)\phi'\vert D\g\vert^2G+\phi''\phi\vert D\g\vert^2\sum_{i=1}^{n}G^{ii}+G^{ij}\g_{li}\g_{lj}\notag\\
&-G^{ij}\g_i\g_j+\vert D\g\vert^2\sum_{i=1}^{n}G^{ii}\)\notag\\
\leq&\frac{\phi^{\a+\beta-1}}{\omega^{\a-\beta-1}}\((\a+\beta-1)\phi'\vert D\g\vert^2G+\phi'^2\vert D\g\vert^2\sum_{i=1}^{n}G^{ii}+G^{ij}\g_{li}\g_{lj}-G^{ij}\g_i\g_j\).
\end{align}
In terms of the negative definite of the symmetric matrix $[G^{ij}]$, and $\a+\beta-1\leq0$, we have $G^{ij}\g_{li}\g_{lj}\leq0$ and $-G^{ij}\g_i\g_j\leq-\min_iG^{ii}\vert D\g\vert^2$. Thus
$$\p_tO_{\max}\leq0.$$
Then we have $\vert D\g(\cdot,t)\vert\leq C_6$ for a positive constant $C_6$.
\end{proof}
By Lemma \ref{l3.5}, we can get the bound of $u$ and $f$.
\begin{corollary}\label{c3.6}
Let $0<\beta\leq1-\alpha$, and $X(\cdot,t)$ be the solution to the flow (\ref{1.7}) which encloses the origin for $t\in[0,T)$. Then there are positive constants $C_4$ and $C_5$ depending on the initial hypersurface and $\alpha,\beta$,  such that
$$\frac{1}{C_4}\leq u\leq C_4\quad\text{and}\quad\frac{1}{C_5}\leq f\leq C_5.$$
\end{corollary}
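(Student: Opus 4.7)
The plan is to observe that Corollary \ref{c3.6} is essentially an immediate consequence of the preceding lemmas in this section, merely repackaging their outputs. The only real work is to combine the uniform $C^0$ estimate on $\rho$ with the gradient estimate on $\gamma$ to extract two-sided bounds on $u$, and then feed these into Lemma \ref{l3.3}.

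First I would use the formula $u=\phi/\omega$. Lemma \ref{l3.1} gives $\rho$ bounded above and below by positive constants, so since $\phi(\rho)=\sinh\rho$ or $\phi(\rho)=\rho$ is smooth, positive and strictly increasing on $(0,\infty)$, this immediately yields $0<c\leq\phi\leq C$ along the flow. Next, Lemma \ref{l3.5} gives $|D\gamma|\leq C_6$, which combined with the trivial inequality $\omega=\sqrt{1+|D\gamma|^2}\geq 1$ yields $1\leq\omega\leq\sqrt{1+C_6^2}$. Dividing the bounds on $\phi$ by those on $\omega$ produces the two-sided estimate $\tfrac{1}{C_4}\leq u\leq C_4$.

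For the bound on $f$, I would invoke Lemma \ref{l3.3}, which asserts that $u^{\alpha-1}f^{-\beta}$ is pinched between positive constants depending only on the initial data and $\alpha,\beta$. Since $\alpha\leq 0$ implies $1-\alpha\geq 1>0$, the quantity $u^{1-\alpha}$ is uniformly bounded above and below by the bounds on $u$ just obtained. Rearranging $\frac{1}{C_3}u^{1-\alpha}\leq f^{-\beta}\leq C_3 u^{1-\alpha}$ and using $\beta>0$ to invert the inequality, one reads off two-sided bounds on $f$ and sharpens the one-sided bound already recorded in Corollary \ref{c3.4}.

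There is no serious obstacle here; the only point requiring a moment of care is checking the sign of the exponent $1-\alpha$ so that the bounds on $u$ translate without reversal into bounds on $u^{1-\alpha}$, and this is guaranteed by the standing hypothesis $\alpha\leq 0$. Everything reduces to an elementary chain of inequalities once Lemmas \ref{l3.1}, \ref{l3.3} and \ref{l3.5} are in hand.
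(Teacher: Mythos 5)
Your proof is correct and follows essentially the same route as the paper: combine the $C^0$ estimate on $\rho$ (Lemma \ref{l3.1}) with $u=\phi/\omega$ and the gradient bound on $|D\gamma|$ (Lemma \ref{l3.5}) to pinch $u$, then feed this into Lemma \ref{l3.3} to pinch $f$. One small remark: your appeal to $\alpha\leq 0$ to control the sign of $1-\alpha$ is unnecessary — since $u$ is bounded above \emph{and} below by positive constants, $u^{1-\alpha}$ is automatically bounded on both sides for any real exponent, and in any case the corollary's own hypothesis $0<\beta\leq 1-\alpha$ already forces $1-\alpha>0$.
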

\begin{proof}
By Lemma \ref{l3.5} and $u=\frac{\phi}{\omega}\leq\phi_{\max}\leq C_4$, we can derive the bound of $u$. By Lemma \ref{l3.3} and the bounds of $u$, we can get the bounds of $f$.
\end{proof}
The next step in our proof is the derivation of the principal curvature boundary.

\begin{lemma}\label{l3.7}
Let $\alpha\leq0<\beta\leq1-\alpha$, and $X(\cdot,t)$ be a smooth, closed and star-shaped solution to the flow (\ref{1.7}) which encloses the origin for $t\in[0,T)$. Then there is a positive constant $C_7$ depending on the initial hypersurface and $\alpha,\beta$,  such that the principal curvatures of $X(\cdot,t)$ are uniformly bounded from above $$\k_i(\cdot,t)\leq C_7 \text{ \qquad } \forall1\le i\le n,$$
and hence, are compactly contained in $\Gamma$, in view of Corollary \ref{c3.6}.
\end{lemma}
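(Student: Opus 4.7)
The plan is to apply the parabolic maximum principle to an auxiliary function built from the largest principal curvature $\kappa_{\max}$ and the support function $u$. Corollary \ref{c3.6} already gives $f \ge 1/C_5$, so once an upper bound on $\kappa_{\max}$ is established the curvature tuple is automatically kept away from $\partial\Gamma$ (where $f$ vanishes) and is therefore compactly contained in $\Gamma$; hence the real content of the lemma is the one-sided curvature bound.

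I would introduce a test function of the form
\begin{equation*}
W(x,t) \;=\; \log\bigl(h^n_n(x,t)\bigr) \;+\; \psi(u(x,t)),
\end{equation*}
where at any spatial maximum point a local orthonormal frame is adapted so that $h^i_j$ is diagonal and $h^n_n = \kappa_{\max}$, and $\psi$ is a smooth profile such as $\psi(u) = -A\log(u-c)$ with $c < \min_{t, M_t} u$ and $A>0$ to be chosen later. Using $h^n_n$ in a diagonalising frame, rather than $\kappa_{\max}$ itself, avoids the nondifferentiability that arises when the top eigenvalue has multiplicity. Using Lemma \ref{l3.2} one derives
\begin{equation*}
\partial_t h^n_n \;=\; -\nabla^n\nabla_n\mathscr{F} \;-\; \mathscr{F}(h^n_n)^2 \;-\; K\mathscr{F},
\end{equation*}
with $\mathscr{F} = u^\alpha f^{-\beta} - \eta u$. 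Expanding $\nabla^n\nabla_n\mathscr{F}$ and invoking the Ricci identity together with the Codazzi equation in the ambient space form $\mathbb{K}^{n+1}$, the principal second-order contribution $\beta u^\alpha f^{-\beta-1}f^{kl}\nabla^n\nabla_n h_{kl}$ converts into a linear elliptic operator $\beta u^\alpha f^{-\beta-1}f^{kl}\nabla_k\nabla_l h^n_n$ plus curvature-correction terms involving $Kh^n_n$ and $(h^2)^n_n h^n_n$. Lemma \ref{l2.2} supplies $\nabla_i\nabla_j u$ in terms of $\nabla h$ and $(h^2)_{ij}$, so the evolution of $\psi(u)$ adds first-order transport terms that can be matched against the gradient of $\log h^n_n$.

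At the spatial maximum of $W$, the vanishing of $\nabla W$ gives $\nabla\log h^n_n = -\psi'(u)\nabla u$, and $A$ (or the profile $\psi$) is tuned so that the remaining gradient terms cancel, leaving a reaction inequality of the form
\begin{equation*}
0 \;\le\; -c_1\,(h^n_n)^2 \;+\; c_2\,h^n_n \;+\; c_3,
\end{equation*}
for constants $c_1>0$ and $c_2,c_3$ independent of $t$, which immediately yields $h^n_n \le C_7$. The main obstacle is the bookkeeping of the many mixed second-order terms produced by $\nabla^n\nabla_n(u^\alpha f^{-\beta})$ — in particular the cross term $-2\alpha\beta u^{\alpha-1}u_n f^{-\beta-1}f_n$, the squared term $\beta(\beta+1)u^\alpha f^{-\beta-2}(f_n)^2$, the concavity contribution $\beta u^\alpha f^{-\beta-1}f^{kl,pq}\nabla_n h_{kl}\nabla_n h_{pq}$, and the nonvanishing ambient curvature terms in $\mathbb{H}^{n+1}$. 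These are all controlled by the sign assumptions $\alpha \le 0$ (making $u\mapsto u^\alpha$ convex), $0<\beta\le 1-\alpha$ (making $f^{-\beta}$ convex in $\kappa$), the concavity of $f$, and the pointwise bounds of Corollary \ref{c3.6} and Lemma \ref{l3.3}.
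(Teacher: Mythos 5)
Your auxiliary function is close to the paper's, but it omits a term that is essential here: the paper's test function is $\theta = \log h_1^1 + p(u) + N\rho$, where $\rho$ is the radial function and $N$ is a large constant to be fixed, while yours is $\log h^n_n + \psi(u)$ with no radial term. This omission is fatal in the hyperbolic case (which Lemma \ref{l3.7} must cover, since it feeds into Theorem \ref{t1.3}). Differentiating $\log h^n_n$ along the flow in $\mathbb{H}^{n+1}$ produces, after dividing by $h^n_n$, the ambient curvature contribution $K\sum_i G^{ii}\Psi = -K\beta f^{-\beta-1}\big(\sum_i f^{ii}\big)\Psi$, which with $K=-1$ is a \emph{positive} term proportional to $\sum_i f^{ii}$. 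Since $\sum_i f^{ii}$ is only degree-zero homogeneous and in general has no a priori upper bound when one principal curvature blows up (e.g.\ $f=\sigma_2^{1/2}$ with $\kappa_1\to\infty$ and the rest bounded gives $\sum_i f^{ii}\sim\kappa_1^{1/2}$), this term cannot be absorbed by anything in your function. The sign assumptions $\alpha\le 0$ and $\beta\le 1-\alpha$ do not touch it: they control reaction terms in $\kappa_1$, not $\sum_i f^{ii}$. The paper's extra piece $N\cL\rho$ supplies precisely the negative term $-N\beta\frac{\phi'}{\phi}\Psi f^{-\beta-1}f^{ij}g_{ij}$, and choosing $N$ large enough makes $N\frac{\phi'}{\phi\omega^2}+K\ge 0$, which is exactly what kills the unbounded $\sum_i f^{ii}$ contribution; see the paper's (3.37)--(3.38).

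Two further points. First, because the hypersurfaces are merely star-shaped (the cone $\Gamma$ strictly contains $\Gamma_+$), some $\kappa_i$ may be very negative, and the concavity/gradient balance does not close uniformly: the paper must split into Case 1 ($\kappa_n<-\eps_1\kappa_1$, where the dominating negativity comes from $f^{ij}(h^2)_{ij}\gtrsim\eps_1^2\kappa_1^2\sum f^{ii}$) and Case 2 ($\kappa_n\ge-\eps_1\kappa_1$, where the Ecker--Huisken-type inequality (3.32) on the concavity term is needed together with $f^{11}\le f^{kk}$). Your proposal does not mention this dichotomy, and without it the gradient terms cannot be controlled uniformly in the non-convex range. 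Second, the claim that ``$0<\beta\le1-\alpha$ makes $f^{-\beta}$ convex in $\kappa$'' is off: $f^{-\beta}$ is convex for every $\beta>0$ by concavity of $f$ (composition of a decreasing convex function with a concave one); the restriction $\beta\le 1-\alpha$ is used elsewhere (the $C^0$ and $Q$-estimates), not for convexity. Finally, for the cancellation $\psi'^2-\psi''=0$ that you rely on, the exponent must be $A=1$, not a free parameter; for $A>1$ the sign flips the wrong way.
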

\begin{proof}




First, we shall prove that $\k_i$  is bounded from above by a positive constant. The principal curvatures of $M_t$ are the eigenvalues of $\{h_{il}g^{lj}\}$.

Define the functions
\begin{gather}\label{3.10}
	W(x,t)=\max\{h_{ij}(x,t)\zeta^i\zeta^j: g_{ij}(x)\zeta^i\zeta^j=1\},\\
	p(u)=-log(u-\frac{1}{2}\min u),
\end{gather}
and
\begin{equation}\label{3.12}
	\theta=log W+p(u)+N\rho,
\end{equation}
where $N$ will be chosen later. Note that
\begin{equation}\label{3.13}
	1+p'u=\frac{-\frac{1}{2}\min u}{u-\frac{1}{2}\min u}<0.
\end{equation}
We wish to bound $\theta$ from above. Thus, suppose $\theta$ attains a maximal value at $(\xi_0,t_0)\in M\times(0,T_0]$, $T_0<T^*$. Choose Riemannian normal coordinates in $(\xi_0,t_0)$, such that in this point we have
\begin{equation}\label{3.14}
	g_{ij}=\delta_{ij},\quad h^i_j=h_{ij}=\k_i\delta_{ij},\quad\k_1\geq\cdots\geq\k_n.
\end{equation}
Since $W$ is only continuous in general, we need to find a differentiable version instead. Set
$$\widetilde{W}=\frac{h_{ij}\tilde{\zeta}^i\tilde{\zeta}^j}{g_{ij}\tilde{\zeta}^i\tilde{\zeta}^j},$$
where $\tilde{\zeta}=(\tilde{\zeta}^i)=(1,0,\cdots,0).$

At $(\xi_0,t_0)$ we have
\begin{equation}\label{3.15}
	h_{11}=h^1_1=\k_1=W=\widetilde{W}
\end{equation}
and in a neighborhood of $(\xi_0,t_0)$ there holds
$$\widetilde{W}\leq W.$$
Using $h^1_1=h_{1k}g^{k1}$, we find that at $(\xi_0,t_0)$
$$\frac{d\widetilde{W}}{dt}=\frac{dh_1^1}{dt}$$
and the spatial derivatives also coincide. Replacing $\theta$ by $\widetilde{\theta}=log \widetilde{W}+p(u)+N\widetilde\rho$, we see that $\widetilde{\theta}$ attains a maximal value at $(\xi_0,t_0)$, where $\widetilde{W}$ satisfies the same differential equation in this point as $h_1^1$. Thus, without loss of generality, we may pretend $h_1^1$ to be a scalar and $\theta$ to be given by
\begin{equation}\label{3.16}
	\theta=log h_1^1+p(u)+N\rho.
\end{equation}
In order to calculate the evolution equations of $\theta$, we should deduce the evolution equations of $h_1^1,$ $u$ and $\rho$. By (\ref{3.3}), we have
\begin{equation}\label{3.17}
	\p_th_{ij}=-\nn_i\nn_j(u^\alpha f^{-\beta}-\eta u)+(u^\alpha f^{-\beta}-\eta u)(h^2)_{ij}-K(u^\alpha f^{-\beta}-\eta u)g_{ij}.
\end{equation}
Remark that $$G^{ij}=\frac{\p G}{\p h_{ij}},\qquad G^{ij,mn}=\frac{\p^2G}{\p h_{ij}\p h_{mn}}.$$
For convenience, we denote $F^{ij},F^{ij,mn}$ by $f^{ij},f^{ij,mn}$. Using (\ref{3.3}) and (\ref{3.17}), we have
\begin{equation*}
	\begin{split}
\frac{\p h_1^1}{\p t}=&g^{k1}\frac{\p h_{k1}}{\p t}-g^{1i}\frac{\p g_{ij}}{\p t}g^{jk}h_{1k}\\
=&-\nn_1\nn_1(\Psi G-\eta u)+(\Psi G-\eta u)h^2_{11}-K(\Psi G-\eta u)-2(\Psi G-\eta u)h^2_{11}\\
=&-\Psi_{;11}G-2\Psi_{;1}G_{;1}-\Psi G_{;11}+\eta u_{;11}-K(\Psi G-\eta u)-(\Psi G-\eta u)h^2_{11}\\
=&-(\a u^{\a-1}u_{;11}+\a(\a-1)u^{\a-2}u_{;1}^2)G-\Psi(G^{ij}h_{ij;11}+G^{ij,mn}h_{ij;1}h_{mn;1})\\
&-2\Psi_{;1}G_{;1}+\eta u_{;11}-K(\Psi G-\eta u)-(\Psi G-\eta u)h^2_{11}.
	\end{split}
\end{equation*}
Since $G^{ij,mn}=-\beta f^{-\beta-1}f^{ij,mn}+\beta(\beta+1)f^{-\beta-2}f^{ij}f^{mn}.$ And by (\ref{2.11}), (\ref{2.12}) and Ricci identity, we have
\begin{equation*}
	\begin{split}
\nn_1\nn_1h_{ij}= &h_{11;ij}+{R_{j11}}^ah_{ai}+{R_{j1i}}^ah_{a1}\\
= &h_{11;ij}+h_{ai}h_{aj}h_{11}+Kh_{ij}-h_{ij}h_{11}^2+K\delta_{1i}h_{1j}-K\delta_{1j}h_{1i}-K\delta_{ij}h_{11}.
	\end{split}
\end{equation*}
Thus
\begin{equation*}
	\begin{split}
\frac{\p h_1^1}{\p t}=&(\eta-\a u^{\a-1}G)u_{;11}-\a(\a-1)u^{\a-2}(\nn_1u)^2G\\
&-\Psi G^{ij}(h_{11;ij}+h_{ai}h_{aj}h_{11}+Kh_{ij}-h_{ij}h_{11}^2-K\delta_{ij}h_{11})\\
&-\Psi(-\beta f^{-\beta-1}f^{ij,mn}+\beta(\beta+1)f^{-\beta-2}f^{ij}f^{mn})h_{ij;1}h_{mn;1}\\
&+2\a\beta u^{\a-1}f^{-\beta-1}\nn_1u\nn_1f-K(\Psi G-\eta u)-(\Psi G-\eta u)h^2_{11}.
	\end{split}
\end{equation*}
Due to (\ref{2.13}), we have
\begin{equation*}
	\begin{split}
\frac{\p h_1^1}{\p t}=&\beta\Psi f^{-\beta-1}f^{ij}h_{11,ij}+\phi(\eta-\a u^{\a-1}G)\rho_{;k}{h_{11;}}^k+\phi'(\eta-\a u^{\a-1}G)h_{11}\\
&+(\a-\beta-1)\Psi Gh_{11}^2-\a(\a-1)\Psi G(\nn_1logu)^2-\Psi G^{ij}h_{ai}h_{aj}h_{11}\\
&-K(1-\beta)\Psi G+K\eta u+K\sum_{i}G^{ii}\Psi h_{11}+\beta\Psi F^{-\beta-1}F^{ij,mn}h_{ij;1}h_{mn;1}\\
&-\beta(\beta+1)\Psi G(\nn_1logf)^2+2\a\beta\Psi G\nn_1logu\nn_1logf.
	\end{split}
\end{equation*}
Define the operator $\mathcal{L}$ by
\begin{equation}\label{3.18}
	\mathcal{L}=\p_t-\beta\Psi f^{-\beta-1}f^{ij}\nn_{ij}^2-\phi(\eta-\a u^{\a-1}G)\rho_{;k}\nn^k.
\end{equation}
Since
\begin{equation}\label{3.19}
2\a\beta\Psi G\nn_1logu\nn_1logf\leq\beta(\beta+1)\Psi G(\nn_1logf)^2+\frac{\beta\a^2}{\beta+1}\Psi G(\nn_1logu)^2,
\end{equation}
we have
\begin{equation}\label{3.20}
	\begin{split}
\cL h_1^1\leq&\phi'(\eta-\a u^{\a-1}G)h_{11}+(\a-\beta-1)\Psi Gh_{11}^2+\frac{\a(\beta+1-\a)}{\beta+1}\Psi G(\nn_1logu)^2\\
&-\Psi G^{ij}h_{ai}h_{aj}h_{11}-K(1-\beta)\Psi G+K\eta u+K\sum_{i}G^{ii}\Psi h_{11}\\
&+\beta\Psi f^{-\beta-1}f^{ij,mn}h_{ij;1}h_{mn;1}.
	\end{split}
\end{equation}
In addition,
\begin{equation*}
	\begin{split}
\frac{\p u}{\p t}=&\frac{\p}{\p t}<V,\nu>\\
=&(\Psi G-\eta u)\phi'+(\eta-\a u^{\a-1}G)<V,\nn u>+\beta\Psi f^{-\beta-1}<V,\nn f>.
	\end{split}
\end{equation*}
By (\ref{2.13}), we deduce
\begin{equation}\label{3.21}
	\begin{split}
\cL u=((1-\beta)\Psi G-\eta u)\phi'+\beta u^{\a+1}f^{-\beta-1}f^{ij}(h^2)_{ij}.
	\end{split}
\end{equation}
By (\ref{2.16}) and (\ref{2.17}), we have
\begin{equation}\label{3.22}
	\begin{split}
\cL\rho=&(\Psi G-\eta u)\omega-\beta\frac{\phi'}{\phi}\Psi f^{-\beta-1}f^{ij}g_{ij}-\phi(\eta-\a u^{\a-1}G)\vert\nn\rho\vert^2\\
&+\beta\frac{\phi'}{\phi}\Psi f^{-\beta-1}f^{ij}\rho_{;i}\rho_{;j}+\frac{\beta\Psi G}{\omega}.
	\end{split}
\end{equation}
Note that $\a\leq0$, $\beta>0$, $\1-\a+\beta>2\beta>0$. If $\k_1$ is sufficiently large, the combination of (\ref{3.20}), (\ref{3.21}) and (\ref{3.22}) gives
\begin{equation}\label{3.23}
	\begin{split}
\cL\theta=&\frac{\cL h_1^1}{h_1^1}+\beta\Psi f^{-\beta-1}f^{ij}\nn_i(logh_1^1)\nn_j(logh_1^1)+p'\cL u-p''\beta\Psi f^{-\beta-1}f^{ij}\nn_iu\nn_ju\\
&+N\cL\rho\\
\leq&\phi'(\eta-\a u^{\a-1}G)+(\a-\beta-1)\Psi Gh_{11}+\beta\Psi f^{-\beta-1} f^{ij}(h^2)_{ij}+\frac{c}{\k_1}\\
&-K\beta f^{-\beta-1}\sum_{i}f^{ii}\Psi+\frac{\beta\Psi f^{-\beta-1}f^{ij,mn}h_{ij;1}h_{mn;1}}{h_{11}}\\
&+\beta\Psi f^{-\beta-1}f^{ij}(\nn_i(logh_1^1)\nn_j(logh_1^1)-p''\nn_iu\nn_ju)+p'\(((1-\beta)\Psi G-\eta u)\phi'\\
&+\beta u^{\a+1}f^{-\beta-1}f^{ij}(h^2)_{ij}\)+N\((\Psi G-\eta u)\omega-\beta\frac{\phi'}{\phi}\Psi f^{-\beta-1}f^{ij}g_{ij}\\
&-\phi(\eta-\a u^{\a-1}G)\vert\nn\rho\vert^2+\beta\frac{\phi'}{\phi}\Psi f^{-\beta-1}f^{ij}\rho_{;i}\rho_{;j}+\frac{\beta\Psi G}{\omega}\)\\
\leq&c+(\a-\beta-1)\Psi Gh_{11}+\beta(1+p'u)\Psi f^{-\beta-1} f^{ij}(h^2)_{ij}-K\beta f^{-\beta-1}f^{ij}g_{ij}\Psi\\
&+\frac{\beta\Psi f^{-\beta-1}f^{ij,mn}h_{ij;1}h_{mn;1}}{h_{11}}+\beta\Psi f^{-\beta-1}f^{ij}(\nn_i(logh_1^1)\nn_j(logh_1^1)-p''\nn_iu\nn_ju)\\
&-N\beta\frac{\phi'}{\phi}\Psi f^{-\beta-1}f^{ij}g_{ij}-N\phi(\eta-\a u^{\a-1}G)\vert\nn\rho\vert^2+N\beta\frac{\phi'}{\phi}\Psi f^{-\beta-1}f^{ij}\rho_{;i}\rho_{;j}.
\end{split}
\end{equation}
Due to the concavity of $f$ it holds that
\begin{equation}\label{3.24}
f^{kl,rs}\xi_{kl}\xi_{rs}\leq\sum_{k\neq l}\frac{f^{kk}-f^{ll}}{\k_k-\k_l}\xi_{kl}^2\leq\frac{2}{\k_1-\k_n}\sum_{k=1}^{n}(f^{11}-f^{kk})\xi_{1k}^2
\end{equation}
for all symmetric matrices $(\xi_{kl})$; cf. \cite{GC2}. Furthermore, we have
\begin{equation}\label{3.25}
f^{11}\leq\cdots\leq f^{nn};
\end{equation}
cf. \cite{EH}. In order to estimate (\ref{3.23}), we distinguish between two cases.

Case 1: $\k_n<-\eps_1\k_1$, $0<\eps_1<\frac{1}{2}$. Then
\begin{equation}\label{3.26}
f^{ij}(h^2)_{ij}\geq f^{nn}\k_n^2\geq\frac{1}{n}f^{ij}g_{ij}\k_n^2\geq\frac{1}{n}f^{ij}g_{ij}\eps_1^2\k_1^2.
\end{equation}
We use $\nn\theta=0$ to obtain
\begin{equation}\label{3.27}
f^{ij}\nn_i(logh_1^1)\nn_j(logh_1^1)=p'^2f^{ij}u_{;i}u_{;j}+2Np'f^{ij}u_{;i}\rho_{;j}+N^2f^{ij}\rho_{;i}\rho_{;j}.
\end{equation}
In this case, the concavity of $f$ implies that
\begin{equation}\label{3.28}
\frac{\beta\Psi f^{-\beta-1}f^{ij,mn}h_{ij;1}h_{mn;1}}{h_{11}}\leq0.
\end{equation}
By (\ref{2.13}) and note that $p'<0$, we have
\begin{equation}\label{3.29}
	\begin{split}
2N\beta p'\Psi f^{-\beta-1}f^{ij}u_{;i}\rho_{;j}=&2N\beta p'\phi\Psi f^{-\beta-1}f^{ij}\k_i\rho_{;i}\rho_{;j}.	
\end{split}
\end{equation}
By \cite{UJ} Lemma 3.3, we know $\frac{H}{n}\geq \frac{f}{\eta}\geq C_9$, where $H=\sum_{i}^{n}\k_i$ is the mean curvature. Thus $(n-1)\k_1+\k_n\ge H\ge C_9$. We can derive $\k_i\ge\k_n\ge C_9-(n-1)\k_1$. For fixed $i$, if $\k_i\geq0$, we derive $$2N\beta p'\phi\Psi f^{-\beta-1}f^{ij}\k_i\rho_{;i}\rho_{;j}\leq0.$$ If $\k_i<0$, we have
$$ 2N\beta p'\phi\Psi f^{-\beta-1}f^{ij}\k_i\rho_{;i}\rho_{;j}\leq2N\beta p'\phi\Psi f^{-\beta-1}\k_if^{ij}g_{ij} \leq C_{10}(C_9-(n-1)\k_1)f^{ij}g_{ij},$$
 where we have used $f^{ij}\rho_{;i}\rho_{;j}\leq f^{ij}g_{ij}\vert\nn\rho\vert^2\leq f^{ij}g_{ij}$.

Without loss of generality, we can assume that $\k_k\ge0$ and $\k_{k+1}\le0$, then
\begin{equation}\label{3.30}
2N\beta p'\phi\Psi f^{-\beta-1}f^{ij}\k_i\rho_{;i}\rho_{;j}\leq(n-k)C_{10}(C_9-(n-1)\k_1)f^{ij}g_{ij}.
\end{equation}

Since $p'^2=p''$ and $1+p'u<0$, by the combination of (\ref{2.19}), (\ref{3.23}), (\ref{3.26}), (\ref{3.27}), (\ref{3.28}) and (\ref{3.30}), in this case (\ref{3.23}) becomes
\begin{equation}\label{3.31}
	\begin{split}
\cL\theta\leq\beta f^{-\beta-1}\Psi f^{ij}g_{ij}\(\frac{1}{n}\eps_1^2\k_1^2(1+p'u)+C_{11}\k_1+C_{12}\)+(\a-\beta-1)\Psi G\k_1+C_{13},
	\end{split}
	\end{equation}
which is negative for large $\k_1$. We also use $\a-\beta-1<-2\beta<0$ in there.

Case 2: $\k_n\geq-\eps_1\k_1$. Then
$$\frac{2}{\k_1-\k_n}\sum_{k=1}^{n}(f^{11}-f^{kk})(h_{11;k})^2k_1^{-1}\leq\frac{2}{1+\eps_1}\sum_{k=1}^{n}(f^{11}-f^{kk})(h_{11;k})^2k_1^{-2}.$$
We deduce further
\begin{equation}\label{3.32}
	\begin{split}
f^{ij}&\nn_i(logh_1^1)\nn_j(logh_1^1)+\frac{2}{\k_1-\k_n}\sum_{k=1}^{n}(f^{11}-f^{kk})(h_{11;k})^2k_1^{-1}\\
\leq&\frac{2}{1+\eps_1}\sum_{k=1}^{n}f^{11}(logh_1^1)_{;k}^2-\frac{1-\eps_1}{1+\eps_1}\sum_{k=1}^{n}f^{kk}(logh_1^1)_{;k}^2\\
\leq&\sum_{k=1}^{n}f^{11}(logh_1^1)_{;k}^2\\
\leq&f^{11}(p'^2\vert\nn u\vert^2+2Np'<\nn u,\nn\rho>+N^2\vert\nn\rho\vert^2),
	\end{split}
\end{equation}
where we have used $f^{kk}\geq f^{11}$ in the second inequality.
Note that
\begin{gather}
\beta(1+p'u)\Psi f^{-\beta-1}f^{ij}(h^2)_{ij}\leq\beta(1+p'u)\Psi f^{-\beta-1}f^{11}\k_1^2,\label{3.33}\\
2Np'f^{11}<\nn u,\nn\rho>=2Np'\phi f^{11}\k_i\rho_{;i}^2\leq-2\eps_1Np'\phi f^{11}\rho_{;i}^2\k_1,\label{3.34}\\
-\beta\Psi f^{-\beta-1}p''f^{ij}\nn_iu\nn_ju+\beta\Psi f^{-\beta-1}p''f^{11}\vert\nn u\vert^2\leq0,\label{3.35}\\
N\beta\frac{\phi'}{\phi}\Psi f^{-\beta-1}f^{ij}\rho_{;i}\rho_{;j}\leq N\beta\frac{\phi'}{\phi}\Psi f^{-\beta-1}f^{ij}g_{ij}\vert\nn\rho\vert^2.\label{3.36}
\end{gather}
According to (\ref{3.36}), we have
\begin{equation}\label{3.37}
-N\beta\frac{\phi'}{\phi}\Psi f^{-\beta-1}f^{ij}g_{ij}+N\beta\frac{\phi'}{\phi}\Psi f^{-\beta-1}f^{ij}\rho_{;i}\rho_{;j}\leq-N\beta\frac{\phi'}{\phi}\Psi f^{-\beta-1}f^{ij}g_{ij}\frac{1}{\omega^2}.
\end{equation}
By the combination of (\ref{3.32}), (\ref{3.33}), (\ref{3.34}), (\ref{3.35}), (\ref{3.36}) and (\ref{3.37}), (\ref{3.23}) becomes
\begin{equation}\label{3.38}
\begin{split}
\cL\theta\leq& c+(\a-\beta-1)\Psi G\k_1+\beta\Psi f^{-\beta-1}f^{11}\((1+p'u)k_1^2+C_{14}\k_1+C_{15}\)\\
&-\beta f^{-\beta-1}f^{ij}g_{ij}\Psi(N\frac{\phi'}{\phi\omega^2}+K),
\end{split}
\end{equation}
which is negative for large $\k_1$ after fixing $N_0$ large enough to ensure that
$$N_0\frac{\phi'}{\phi\omega^2}+K\geq0.$$
Hence in this case any $N\geq N_0$ yields an upper bound for $\k_1$.

In conclusion, $\k_i\leq C_7$, where $C_7$ depends on the initial hypersurface, $\a$ and $\beta$. Since $f$ is uniformly continuous on the convex cone $\overline{\Gamma}$, and $f$ is bounded from below by a positive constant. By Corollary \ref{c3.6} and Assumption \ref{a1.1} imply that $\k_i$ remains in a fixed compact subset of $\Gamma$, which is independent of $t$.
\end{proof}

The estimates obtained in Lemma \ref{l3.1}, \ref{l3.5}, \ref{l3.7} and Corollary \ref{c3.6} depend on $\a$, $\beta$ and the geometry of the initial data $M_0$. They are independent of $T$. By Lemma \ref{l3.1}, \ref{l3.5}, \ref{l3.7} and Corollary \ref{c3.6}, we conclude that the equation (\ref{2.17}) is uniformly parabolic. By the $C^0$ estimate (Lemma \ref{l3.1}), the gradient estimate (Lemma \ref{l3.5}), the $C^2$ estimate (Lemma \ref{l3.7}) and the Krylov's and Nirenberg's theory \cite{KNV,LN}, we get the H$\ddot{o}$lder continuity of $\nabla^2\rho$ and $\rho_t$. Then we can get higher order derivation estimates by the regularity theory of the uniformly parabolic equations. Hence we obtain the long time existence and $C^\infty$-smoothness of solutions for the expanding flow (\ref{1.7}). The uniqueness of smooth solutions also follows from the parabolic theory. In summary, we have proved the following theorem.
\begin{theorem}\label{t3.8}
Let $M_0$ be a smooth, closed and star-shaped hypersurface in $\mathbb{K}^{n+1}$, $n\geq2$, which encloses the origin. If (\romannumeral1) $\alpha\leq0<\beta<1-\alpha$ in $\mH^{n+1}$ or (\romannumeral2) $\alpha\leq0<\beta\le1-\alpha$ in $\mR^{n+1}$, the expanding flow (\ref{1.7}) has a unique smooth, closed and star-shaped solution $M_t$ for all time $t\geq0$. Moreover, the radial function of $M_t$ satisfies the a priori estimates
$$\parallel\rho\parallel_{C^{k,\beta}(\mS^n\times[0,\infty))}\leq C,$$
where the constant $C>0$ depends only on $k,\a,\beta$ and the geometry of $M_0$.
\end{theorem}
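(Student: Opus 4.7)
The plan is to assemble Theorem \ref{t3.8} from the a priori estimates already proved, combined with a standard continuation-and-regularity argument for scalar fully nonlinear parabolic equations on $\mathbb{S}^n$. First I would recall that via the parametrization by the radial function, the flow (\ref{1.7}) is equivalent to the scalar initial-value problem (\ref{2.17}) on $\mathbb{S}^n$, and that this PDE is fully nonlinear of parabolic type in $\rho$ whenever the matrix $h^i_j$ (given by the formula in Section 2.4) lies in the admissible cone $\Gamma$. Short-time existence on some maximal interval $[0,T^*)$ follows from standard parabolic theory applied to the linearization about $\rho_0$, together with the openness of the admissible condition $\kappa\in\Gamma$.

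Next I would run the continuation argument. Suppose $T^*<\infty$. Lemma \ref{l3.1} supplies the two-sided bound $C_1^{-1}\le\rho\le C_1$; Lemma \ref{l3.5} and Corollary \ref{c3.6} give $|D\gamma|\le C_6$ and $C_4^{-1}\le u\le C_4$, which together with the $C^0$ bound translate into a full $C^1$-bound on $\rho$; Lemma \ref{l3.7} and Corollary \ref{c3.6} provide the two-sided bound on the principal curvatures $\kappa_i$ away from $\partial\Gamma$, which in view of (\ref{2.15})--(\ref{2.16}) yields a uniform $C^2$-bound on $\rho$. Consequently, on the set of admissible $\rho$ the operator on the right-hand side of (\ref{2.17}) is uniformly parabolic and concave in $D^2\rho$ (the concavity of $f$, hence convexity of $f^{-\beta}$, combined with the dependence on $u$, gives the requisite structure, and $\alpha\le 0$ ensures the lower-order terms are controlled).

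With uniform parabolicity and concavity in hand, I would invoke the Krylov--Safonov/Evans--Krylov theory (as in \cite{KNV,LN}) to obtain a uniform $C^{2,\beta'}_{x,t}$ estimate for $\rho$ on $\mathbb{S}^n\times[0,T^*)$ that is independent of $T^*$. Linearizing the equation about this Hölder-continuous solution turns (\ref{2.17}) into a linear uniformly parabolic equation with $C^{\beta'}$ coefficients, so parabolic Schauder estimates and a standard bootstrap give uniform $C^{k,\beta'}$ bounds for every $k$, again independently of $T^*$. In particular $\rho(\cdot,T^*)$ exists as a smooth admissible hypersurface, contradicting the maximality of $T^*$; this forces $T^*=\infty$ and yields the stated estimate $\|\rho\|_{C^{k,\beta}(\mathbb{S}^n\times[0,\infty))}\le C$. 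Uniqueness follows by the standard maximum-principle argument for the difference of two solutions to a uniformly parabolic fully nonlinear equation.

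The only non-routine point is verifying that the estimates of Lemmas \ref{l3.1}--\ref{l3.7} are genuinely independent of $T^*$, since they were derived on an arbitrary time interval $[0,T)$ on which a smooth admissible solution exists; this is exactly what was established, so the continuation argument closes. The main subtlety to emphasize is the admissibility: the $C^2$-bound alone does not prevent $\kappa$ from touching $\partial\Gamma$, but Corollary \ref{c3.6} together with Lemma \ref{l3.7} confines $\kappa$ to a fixed compact subset of $\Gamma$, which is what keeps the equation uniformly parabolic at the endpoint.
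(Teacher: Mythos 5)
Your proposal is correct and follows the same line as the paper: collect the $C^0$ estimate (Lemma \ref{l3.1}), gradient estimate (Lemma \ref{l3.5}), $C^2$/curvature estimate (Lemma \ref{l3.7} with Corollary \ref{c3.6}) to get uniform parabolicity of the scalar equation (\ref{2.17}), then invoke Krylov/Nirenberg to obtain H\"older continuity of $\nabla^2\rho$ and $\rho_t$, bootstrap by Schauder theory, and conclude long-time existence and uniqueness from parabolic theory. You spell out the short-time existence and continuation argument that the paper leaves implicit, but that is a routine gap-fill rather than a different route.
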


\section{Proof Of Theorem 1.2 and 1.3}

In this section, we prove the asymptotical convergence of solutions to the expanding flow (\ref{1.7}). By Theorem \ref{t3.8} it is known that the flow (\ref{1.7}) exists for all time $t>0$ and remains smooth and star-shaped, provided $M_0$ is smooth, star-shaped and encloses the origin. In Section 3, we have the bound of $\rho$, $\vert D\g\vert$ and $f$. It then follows by (\ref{3.9}) that $\p_tO_{\max}\leq-C_0O_{\max}$ for some positive constant $C_0$, where $O=\frac{1}{2}\vert D\g\vert^2$. This proves
\begin{equation}\label{4.1}
\max_{\mS^n}\vert D\g\vert^2\leq Ce^{-C_0t},\forall t>0,
\end{equation}
for both $C$ and $C_0$ are positive constants. Meanwhile, according to the bound of $\phi$, we can derive
\begin{equation}\label{4.2}
\max_{\mS^n}\vert D\rho\vert^2\leq C'e^{-C_0t},\forall t>0.
\end{equation}

\begin{proof of theorem 1.2 and 1.3}

By (\ref{4.2}), we have that $\vert D\rho\vert\to0$ exponentially as $t\to\infty$. Hence by the interpolation and the a priori estimates, we can get that $\rho$ converges exponentially to a constant in the $C^\infty$ topology as $t\to\infty$.

\end{proof of theorem 1.2 and 1.3}

\section{Proof Of Theorem 1.4 and 1.5}
The aim of this section is to proof some monotone quantities involving a weighted $\sigma_k$ or power of $\sigma_k$ integral along inverse curvature flows in the Euclidean space $\mR^{n+1}$. In this section we use $\widetilde{S}_{\iota,k}$, $\widetilde{T}_{\iota,k}$, etc. to express the quantities along the flow (\ref{1.6}) or (\ref{1.7}), and $S_{\iota,k}$, $T_{\iota,k}$, etc. express the quantities evolving by the flow (\ref{1.1}).

First, we derive the evolution equation of $\widetilde{S}_{\iota,k}$ and $\widetilde{T}_{\iota,k}$. In the rest of this paper, without loss of generality, we can let $f^{-\beta}(1,\cdots,1)=1$, i.e. $\eta=1$.

\begin{lemma}\label{l5.1}
If $\a=1-\beta$, Denote $\mathscr{F}=u^\a f^{-\beta}$. Under the flow (1.6) or (1.7) in $\mR^{n+1}$, we have
\begin{align}
\p_t\widetilde{S}_{\iota,k}=&e^{-(n-k+\iota)t}\int_{M_t}\iota(n-k+1)u^{\iota-1}p_k\mathscr{F}+(1-\iota)(n-k)u^\iota p_{k+1}\mathscr{F}\notag\\
&-(n-k+\iota)u^\iota p_k+\iota(\iota-1)u^{\iota-2}\mathscr{F}(<\nabla u,\nabla\Phi>p_k-p_k^{ij}\nabla^iu\nabla_ju)d\mu,\label{5.1}\\
\p_t\widetilde{T}_{\iota,k}=&e^{-(n-k\iota)t}\int_{M_t}-(n-k\iota)p_k^\iota+\iota(\iota-1)p_k^{\iota-2}p_k^{ij}\nabla^ip_k\nabla_j\mathscr{F}+(1-\iota)p_k^\iota\mathscr{F}H\notag\\
&+(n-k)\iota p_k^{\iota-1}p_{k+1}\mathscr{F}d\mu,\label{5.2}
\end{align}
where $H=np_1$ is the mean curvature.
\end{lemma}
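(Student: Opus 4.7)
The plan is to exploit the scaling symmetry of the case $\alpha=1-\beta$ and then compute the evolution along the unnormalized flow (1.1).

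First, with $\alpha=1-\beta$ and $\eta=1$, formula (1.4) gives $\varphi(t)=e^t$ and $\widetilde X=e^{-t}X$. Under this rescaling, lengths scale by $e^{-t}$ and principal curvatures by $e^{t}$, so $\widetilde u=e^{-t}u$, $\widetilde p_k=e^{kt}p_k$ and $d\widetilde\mu=e^{-nt}d\mu$. Consequently
\begin{equation*}
\widetilde S_{\iota,k}(t)=e^{-(n-k+\iota)t}\,S_{\iota,k}(t),\qquad \widetilde T_{\iota,k}(t)=e^{-(n-k\iota)t}\,T_{\iota,k}(t).
\end{equation*}
Differentiating these relations produces precisely the $-(n-k+\iota)u^\iota p_k$ term of (5.1) and the $-(n-k\iota)p_k^\iota$ term of (5.2), so it suffices to compute $\partial_tS_{\iota,k}$ and $\partial_tT_{\iota,k}$ along the unnormalized flow $\partial_tX=\mathscr F\nu$.

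Second, using Lemma 3.2 in $\mathbb{R}^{n+1}$ ($\phi'=1$, $K=0$) one has $\partial_tu=\mathscr F-\langle\nabla\Phi,\nabla\mathscr F\rangle$, $\partial_t\,d\mu=\mathscr FH\,d\mu$, and $\partial_th^i_j=-\nabla^i\nabla_j\mathscr F-\mathscr F(h^2)^i_j$. Combining with the Newton identity $p_k^{ij}(h^2)_{ij}=np_1p_k-(n-k)p_{k+1}$ from Lemma 2.4 gives $\partial_tp_k=-p_k^{ij}\nabla^i\nabla_j\mathscr F-\mathscr F\bigl(np_1p_k-(n-k)p_{k+1}\bigr)$. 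Substituting and using $H=np_1$, the $\pm np_1p_k\mathscr F$ pieces cancel and one is left with
\begin{equation*}
\partial_tS_{\iota,k}=\int_{M_t}\!\bigl[\iota u^{\iota-1}\mathscr F p_k-\iota u^{\iota-1}\langle\nabla\Phi,\nabla\mathscr F\rangle p_k-u^\iota p_k^{ij}\nabla^i\nabla_j\mathscr F+(n-k)u^\iota p_{k+1}\mathscr F\bigr]d\mu,
\end{equation*}
together with the analogous (simpler) expression for $\partial_tT_{\iota,k}$.

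Third, I would eliminate the $\nabla^2\mathscr F$ and $\nabla\mathscr F$ terms by integration by parts, using the divergence-freeness of the Newton tensor in $\mathbb{R}^{n+1}$ (a consequence of Codazzi). For $T_{\iota,k}$ one integration by parts suffices and yields (5.2) directly. For $S_{\iota,k}$ the $u^\iota p_k^{ij}\nabla^i\nabla_j\mathscr F$ term is integrated by parts twice; in the second step the factor $p_k^{ij}\nabla_i\nabla_ju$ appears, which I would expand via Lemma 2.2 as $\langle\nabla p_k,\nabla\Phi\rangle+kp_k-u\bigl(np_1p_k-(n-k)p_{k+1}\bigr)$, using $p_k^{ij}\nabla_lh_{ij}=\nabla_lp_k$ and again Lemma 2.4. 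The $\langle\nabla\Phi,\nabla\mathscr F\rangle$ term is handled by a single integration by parts, using $\Delta\Phi=n-uH$ (i.e.\ the trace of $\nabla^2_{ij}\Phi=g_{ij}-uh_{ij}$, valid in $\mathbb{R}^{n+1}$).

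Finally, I would collect like terms. The expected cancellations are: the two $\iota u^{\iota-1}\langle\nabla p_k,\nabla\Phi\rangle\mathscr F$-contributions arising from the two integrations by parts cancel; the $\iota n u^\iota p_1p_k\mathscr F$ pieces coming from $\Delta\Phi=n-uH$ and from $p_k^{ij}(h^2)_{ij}$ cancel; the $\iota u^{\iota-1}p_k\mathscr F$ coefficients combine as $1+n-k=n-k+1$; and the $u^\iota p_{k+1}\mathscr F$ coefficients combine as $-\iota(n-k)+(n-k)=(1-\iota)(n-k)$. The main obstacle is this bookkeeping — in particular verifying that after the two rounds of integration by parts the only surviving gradient-of-$u$ terms are the two quadratic pieces $\langle\nabla u,\nabla\Phi\rangle p_k$ and $-p_k^{ij}\nabla^iu\,\nabla^ju$ displayed in (5.1). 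Assembled with the scaling contribution from the first step, this produces both (5.1) and (5.2).
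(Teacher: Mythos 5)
Your proposal is correct and follows essentially the same route as the paper: rescale $\widetilde S_{\iota,k}=e^{-(n-k+\iota)t}S_{\iota,k}$, $\widetilde T_{\iota,k}=e^{-(n-k\iota)t}T_{\iota,k}$, differentiate $S_{\iota,k}$ and $T_{\iota,k}$ along the unnormalized flow via Lemma 3.2, integrate the $p_k^{ij}\nabla^i\nabla_j\mathscr{F}$ term by parts twice using the divergence-freeness of $p_k^{ij}$ and $\nabla_i\nabla_ju$ from Lemma 2.2, and collect terms. The only cosmetic difference is in the middle bookkeeping: the paper combines the two surviving gradient terms into $-\iota u^{\iota-1}\langle\nabla\Phi,\nabla(p_k\mathscr F)\rangle$ and integrates that by parts once, whereas you handle $\langle\nabla\Phi,\nabla\mathscr F\rangle p_k$ directly via $\Delta\Phi=n-uH$ and observe the $\langle\nabla\Phi,\nabla p_k\rangle$ pieces cancel — the same computation rearranged.
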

\begin{proof}
Note that $\widetilde{X}=e^{-t}X$. It is easy to find that $\widetilde{S}_{\iota,k}=e^{-(n-k+\iota)t}S_{\iota,k}$ and $\widetilde{T}_{\iota,k}=e^{-(n-k\iota)t}T_{\iota,k}$. Then by (\ref{3.3}), we have
\begin{align*}
\p_t\widetilde{S}_{\iota,k}=&\p_t(e^{-(n-k+\iota)t}S_{\iota,k})\\
=&e^{-(n-k+\iota)t}\int_{M_t}-(n-k+\iota)u^\iota p_k+\iota u^{\iota-1}p_k(\mathscr{F}-<\nabla\mathscr{F},\nabla\Phi>)\\
&+u^\iota p_k^{ij}(-\nn_i\nn^j\mathscr{F}-\mathscr{F}(h^2)_i^j)+u^\iota p_kH\mathscr{F}d\mu.
\end{align*}
By (\ref{2.13}) and integral by part, we have
\begin{equation}\label{5.3}
\begin{split}
\int_{M_t}u^\iota p_k^{ij}\nn_i\nn^j\mathscr{F}d\mu=&-\int_{M_t}\iota u^{\iota-1}p_k^{ij}\nn_iu\nn^j\mathscr{F}d\mu\\
=&\int_{M_t}\mathscr{F}(\iota(\iota-1)u^{\iota-2}p_k^{ij}\nn_iu\nn^ju+\iota u^{\iota-1}p_k^{ij}\nn^j\nn_iu)d\mu\\
=&\int_{M_t}\mathscr{F}\(\iota(\iota-1)u^{\iota-2}p_k^{ij}\nn_iu\nn^ju\\
&+\iota u^{\iota-1}(<\nn\Phi,\nn p_k>+kp_k-u(Hp_k-(n-k)p_{k+1}))\)d\mu,
\end{split}
\end{equation}
where we use $p_k^{ij}(h^2)_i^j=Hp_k-(n-k)p_{k+1}$ and $\sum_{i}\nn_ip_k^{ij}=0$. By (\ref{5.3}) and $\nn_i\nn_j\Phi=g_{ij}\phi'-h_{ij}u$, we have
\begin{equation*}
\begin{split}
\p_t\widetilde{S}_{\iota,k}=&e^{-(n-k+\iota)t}\int_{M_t}-(n-k+\iota)u^\iota p_k+(1-k)\iota u^{\iota-1}p_k\mathscr{F}+\iota u^\iota Hp_k\mathscr{F}\\
&-\iota u^{\iota-1}<\nn\Phi,\nn(p_k\mathscr{F})>+(1-\iota)(n-k)u^\iota p_{k+1}\mathscr{F}\\
&-\iota(\iota-1)u^{\iota-2}\mathscr{F}p_k^{ij}\nn_iu\nn^jud\mu\\
=&e^{-(n-k+\iota)t}\int_{M_t}-(n-k+\iota)u^\iota p_k+(1-k)\iota u^{\iota-1}p_k\mathscr{F}+\iota u^\iota Hp_k\mathscr{F}\\
&+\iota(\iota-1)u^{\iota-2}p_k\mathscr{F}<\nn\Phi,\nn u>+\iota u^{\iota-1}(n-Hu)p_k\mathscr{F}\\
&+(1-\iota)(n-k)u^\iota p_{k+1}\mathscr{F}-\iota(\iota-1)u^{\iota-2}\mathscr{F}p_k^{ij}\nn_iu\nn^jud\mu\\
=&e^{-(n-k+\iota)t}\int_{M_t}\iota(n-k+1)u^{\iota-1}p_k\mathscr{F}+(1-\iota)(n-k)u^\iota p_{k+1}\mathscr{F}\\
&-(n-k+\iota)u^\iota p_k+\iota(\iota-1)u^{\iota-2}\mathscr{F}(<\nabla u,\nabla\Phi>p_k-p_k^{ij}\nabla^iu\nabla_ju)d\mu.
\end{split}
\end{equation*}
Similarly,
\begin{equation*}
\begin{split}
\p_t\widetilde{T}_{\iota,k}=&\p_t(e^{-(n-k\iota)t}T_{\iota,k})\\
=&e^{-(n-k\iota)t}\int_{M_t}-(n-k\iota)p_k^\iota+\iota p_k^{\iota-1}p_k^{ij}(-\nn^i\nn_j\mathscr{F}-\mathscr{F}(h^2)^i_j)+p_k^\iota\mathscr{F}Hd\mu\\
=&e^{-(n-k\iota)t}\int_{M_t}-(n-k\iota)p_k^\iota+\iota(\iota-1)p_k^{\iota-2}p_k^{ij}\nabla^ip_k\nabla_j\mathscr{F}+(1-\iota)p_k^\iota\mathscr{F}H\\
&+(n-k)\iota p_k^{\iota-1}p_{k+1}\mathscr{F}d\mu.
\end{split}
\end{equation*}
\end{proof}
Due to the evolution equations in Lemma \ref{l5.1}, we can prove Theorem \ref{t1.4} and \ref{t1.5}.

\begin{proof of theorem 1.4}
	
Let $\a=0$, $\beta=1$ and $\mathscr{F}=\frac{p_{k-1}}{p_k}$. Denote
\begin{equation*}
\begin{split}
\Rmnum1=&\int_{M_t}\iota(n-k+1)u^{\iota-1}p_k\mathscr{F}+(1-\iota)(n-k)u^\iota p_{k+1}\mathscr{F}-(n-k+\iota)u^\iota p_k\\
&+\iota(\iota-1)u^{\iota-2}\mathscr{F}(<\nabla u,\nabla\Phi>p_k-p_k^{ij}\nabla^iu\nabla_ju)d\mu.
\end{split}
\end{equation*}
Then the monotonicity of $\widetilde{S}_{\iota,k}$ is equivalent to the positivity of $\Rmnum1$.

 We first consider the case $(\rmnum5)$.
By the bound of $k$, we can derive $n-k+1\ge0$ and $n-k\ge0$. If $\iota<0$ we have $\iota(\iota-1)>0$. By Newton-Maclaurin inequality, we can estimate $\Rmnum1$.
\begin{equation}\label{5.4}
\begin{split}
\Rmnum1\le&\int_{M_t}\iota(n-k+1)u^{\iota-1}(p_{k-1}-up_k)\\
&+\iota(\iota-1)u^{\iota-2}\(<\nabla u,\nabla\Phi>p_{k-1}-\frac{p_{k-1}}{p_k}p_k^{ij}\nabla_iu\nabla^ju\)d\mu\\
\le&\int_{M_t}\frac{\iota(n-k+1)}{k}u^{\iota-1}p_k^{ij}\nn_i\nn^j\Phi\\
&+\iota(\iota-1)u^{\iota-2}\(<\nabla u,\nabla\Phi>p_{k-1}-\frac{p_{k-1}}{p_k}p_k^{ij}\nabla_iu\nabla^ju\)d\mu\\
\le&\int_{M_t}\iota(\iota-1)u^{\iota-2}\(-\frac{n-k+1}{k}p_k^{ij}\nn_iu\nn^j\Phi+<\nabla u,\nabla\Phi>p_{k-1}\\
&-\frac{p_{k-1}}{p_k}p_k^{ij}\nabla_iu\nabla^ju\)d\mu.
\end{split}
\end{equation}
Denote
$$\Rmnum2=-\frac{n-k+1}{k}p_k^{ij}\nn_iu\nn^j\Phi+<\nabla u,\nabla\Phi>p_{k-1}-\frac{p_{k-1}}{p_k}p_k^{ij}\nabla_iu\nabla^ju.$$
The choice of coordinate does not change the sign of $\Rmnum2$. So we can choose coordinates in $(x_0,t_0)$, such that in this point $p_k^{ij}$, $g_{ij}$ and ${h_i^j}$ is diagonal. In other words, $h_i^j=\k_i\delta_{ij}$. By Lemma \ref{l2.2}, we have $\nn_iu=\k_i\nn_i\Phi$. Note that $\frac{C_n^k}{C_n^{k-1}}=\frac{n-k+1}{k}$, where $C_n^k=\frac{n!}{k!(n-k)!}$. We can derive
\begin{equation*}
\begin{split}
C_n^{k-1}\Rmnum2=&-\sigma_k^{ii}\nn_iu\nn^i\Phi+\sigma_{k-1}\nn_iu\nn^i\Phi-\frac{\sigma_{k-1}}{\sigma_k}\sigma_k^{ii}\nabla_iu\nabla^iu\\
=&(\sigma_{k-1}-\sigma_k^{ii})\nn_iu\nn^i\Phi-\frac{\sigma_{k-1}}{\sigma_k}\sigma_k^{ii}\nabla_iu\nabla^iu\\
=&\sigma_{k-2,i}\k_i\nn_iu\nn^i\Phi-\frac{\sigma_{k-1}}{\sigma_k}\sigma_k^{ii}\nabla_iu\nabla^iu\\
=&(\sigma_{k-2,i}-\frac{\sigma_{k-1}}{\sigma_k}\sigma_k^{ii})(\nabla_iu)^2\\
=&\frac{1}{\sigma_k}(\sigma_{k-1}^{ii}\sigma_k-\sigma_{k-1}\sigma_k^{ii})(\nabla_iu)^2.
\end{split}
\end{equation*}
By \cite{HGC}, $\frac{\sigma_k}{\sigma_{k-1}}$ is increasing. This also explains $\sigma_{k-1}^{ii}\sigma_k-\sigma_{k-1}\sigma_k^{ii}\le0$. From the above we can get $\p_t\widetilde{S}_{\iota,k}\le0$. If $M_t$ is a round sphere, $\nn_iu=0$ for $\forall i$. Thus, $\widetilde{S}_{\iota,k}$ is invariant if and only if $M_t$ is a round sphere for each $t$. We complete the proof of the case ($\rmnum5$).

The case ($\rmnum2$) and ($\rmnum4$) is obvious by (\ref{5.1}) and Minkowski formula $\int_{M}up_kd\mu=\int_{M}\phi'p_{k-1}d\mu$.

The proof of the case ($\rmnum1$) and ($\rmnum3$) is similar to the case ($\rmnum5$).
\end{proof of theorem 1.4}

Remark: In case ($\rmnum1$) and ($\rmnum3$), we only prove the situation of $k=n$. In fact, if $k<n$, there is no situation where $\Rmnum2$ and the second term of $\Rmnum1$  have upper or lower bounds at the same time. We also consider the monotonicity $\widetilde{S}_{\iota,l}$ along the flow (\ref{1.6}) or (\ref{1.7}) with $\mathscr{F}=p_{k-1}/p_k$ for $l<k$. If $0<\iota<1$, the first four terms of $\Rmnum1$ have lower bound at the same time, but whether ``$\Rmnum2$" after the same treatment can be estimated is a question.\\

\begin{proof of theorem 1.5}

Let $\a=0$, $\beta=1$ and $\mathscr{F}=p_k^{\frac{1}{k}}$. Denote
\begin{equation*}
	\begin{split}
		\Rmnum3=&\int_{M_t}-(n-k\iota)p_k^\iota+\iota(\iota-1)p_k^{\iota-2}p_k^{ij}\nabla^ip_k\nabla_j\mathscr{F}+(1-\iota)p_k^\iota\mathscr{F}H\\
		&+(n-k)\iota p_k^{\iota-1}p_{k+1}\mathscr{F}d\mu.
	\end{split}
\end{equation*}
Then the monotonicity of $\widetilde{T}_{\iota,k}$ is equivalent to the positivity of $\Rmnum3$.

We first consider the case ($\rmnum1$). By the bound of $k$, we can derive $n-k$. If $\iota>1$ we have $\iota(\iota-1)>0$ and $1-\iota<0$. By Newton-Maclaurin inequality and the Positive definiteness of $p_k^{ij}$, we can estimate $\Rmnum3$.
\begin{equation*}
\begin{split}
\Rmnum3=&\int_{M_t}-(n-k\iota)p_k^\iota-\frac{\iota(\iota-1)}{k}p_k^{\iota-3-\frac{1}{k}}p_k^{ij}\nabla^ip_k\nabla_jp_k+n(1-\iota)p_k^{\iota-\frac{1}{k}}p_1\\
&+(n-k)\iota p_k^{\iota-1-\frac{1}{k}}p_{k+1}d\mu\\
\le&\int_{M_t}-(n-k\iota)p_k^\iota+n(1-\iota)p_k^\iota+\iota(n-k)p_k^\iota d\mu=0.
\end{split}
\end{equation*}
This means $\p_t\widetilde{T}_{\iota,k}\le0$. If $M_t$ is a round sphere, $\nn_ip_k=0$ for $\forall i$. And the equality holds in Newton-MacLaurin inequality if and only if $M$ is a sphere. We can deduce $\widetilde{T}_{\iota,k}$ is invariant if and only if $M_t$ is a round sphere for each $t$. We have a complete proof of the case ($\rmnum1$).

The case ($\rmnum2$), ($\rmnum4$) and ($\rmnum5$) is obvious by (\ref{5.2}), Lemma \ref{l2.5} and Minkowski formula.

The proof of the case ($\rmnum3$) is similar to the case ($\rmnum1$).
\end{proof of theorem 1.5}

\section{Proof Of Theorem 1.7}
Along the flow $\frac{\p X}{\p t}=\(\frac{p_{n-m-1}}{u^mp_n}-u\)\nu$, we can deduce that
\begin{equation}\label{6.1}
\begin{split}
\frac{\p}{\p t}\int_{M_t}p_kd\mu=&(n-k)\int_{M_t}p_{k+1}\(\frac{p_{n-m-1}}{u^mp_n}-u\)d\mu\\
\geq&(n-k)\int_{M_t}\frac{p_{k-m}}{u^m}-p_kd\mu,
\end{split}
\end{equation}
where $0\le m\le k\le n-1$.

By \cite{KK} Proposition 4.3, we have
\begin{equation}\label{6.2}
\int f(u)p_k=\int uf(u)p_{k+1}-\frac{1}{(n-k)C_{n}^k}\int f'(u)<T_kA^\nu(Y^T),Y^T>,
\end{equation}
where $f$ is a smooth function on $\mR$, $Y$ is the position vector and $T_k$ is the $k$th Newton transformation.

We can derive $\int\frac{p_{k-m}}{u^m}d\mu\ge\int\frac{p_{k-m+1}}{u^{m-1}}d\mu\ge\cdots\ge\int p_kd\mu$ in convex hypersurface. This means that $\frac{\p}{\p t}\int_{M_t}p_kd\mu\ge0$. The proof of $(\rmnum1)$ has been completed.

To prove $(\rmnum2)$ and $(\rmnum3)$, we still use the notations in Section 5. By (\ref{5.1}), we have
\begin{equation}\label{6.3}
	\begin{split}
\p_t\widetilde{S}_{\iota,n}=&e^{-\iota t}\int_{M_t}\iota u^{\iota-1}\frac{p_{n-m-1}}{u^m}-\iota u^\iota p_n\\
&+\iota(\iota-1)u^{\iota-2}\frac{p_{n-m-1}}{up_n}(<\nabla u,\nabla\Phi>p_n-p_n^{ij}\nabla^iu\nabla_ju)d\mu.
	\end{split}
\end{equation}
Choose appropriate coordinates to make $[p_n^{ij}]$ diagonal. We can get
$$<\nabla u,\nabla\Phi>p_n-p_n^{ij}\nabla^iu\nabla_ju=p_n\nn_iu\nn_i\Phi-p_n^{ij}\k_i\nn_iu\nn_i\Phi=0.$$
Thus, \begin{equation}\label{6.4}
\p_t\widetilde{S}_{\iota,n}=\iota e^{-\iota t}\int_{M_t} u^{\iota-1-m}p_{n-m-1}-u^\iota p_nd\mu.
\end{equation}
By (\ref{6.2}), if $\iota-1-m\ge0,\cdots,\iota-1\ge0$, we have $\int u^{\iota-1-m}p_{n-m-1}\le\int u^{\iota-m}p_{n-m}\le\cdots\le\int u^\iota p_n.$ In other words, if $\iota\ge m+1$, we have $\int u^{\iota-1-m}p_{n-m-1}\le\int u^{\iota-m}p_{n-m}\le\cdots\le\int u^\iota p_n.$ This means $\p_t\widetilde{S}_{\iota,n}\le0.$

 Similarly, if $\iota\le1$, we have $\int u^{\iota-1-m}p_{n-m-1}\ge\int u^{\iota-m}p_{n-m}\ge\cdots\ge\int u^\iota p_n.$ This means $\p_t\widetilde{S}_{\iota,n}\ge0$ for $0<\iota\le1$ and $\p_t\widetilde{S}_{\iota,n}\le0$ for $\iota<0$. The proof of $(\rmnum2)$ and $(\rmnum3)$ has been completed.

The inequalities in Theorem \ref{t1.7} is a direct corollary of these monotone quantities. The proof of this inequalities is similar to Theorem \ref{t1.6} or Corollary \ref{c7.2}.

\section{Some Applications and Geometric inequalities}

In this section, we give a new proof of a family of inequalities involving the weighted integral of $k$th elementary symmetric function for $k$-convex, star-shaped hypersurfaces.

By Theorem \ref{t1.4} and \ref{t1.5}, we can derive a family of straightforward corollaries. The first corollary is Theorem \ref{t1.6}.

\begin{proof of theorem 1.6}
	
By the case ($\rmnum4$) of Theorem \ref{t1.4}, we have
$$\frac{d}{dt}S_{0,k-1}(t)=0 \text{\qquad   and   \qquad} \frac{d}{dt}S_{0,k}(t)\le0$$
under the flow $\frac{\p X}{\p t}=\(\frac{p_{k-1}}{p_k}-u\)\nu$. Theorem \ref{t1.2} says that the flow converges to some geodesic ball $B_r$ with $S_{0,k-1}(B_r)=S_{0,k-1}(0)=S_{0,k-1}(t)$, where we also denote $S_{\iota,k}(B_r)$ by $\int_{B_r}u^\iota p_kd\mu$. Thus we have
$$S_{0,k}(t)\ge S_{0,k}(B_r), \text{\quad with\quad }S_{0,k-1}(t)=S_{0,k-1}(B_r) \text{ for some } r>0,$$
which is equivalent to
$$\left(\frac{V_{(n+1)-k}(\Omega)}{V_{(n+1)-k}(B)}\right)^{\frac{1}{n+1-k}}= r\le\left(\frac{V_{n-k}(\Omega)}{V_{n-k}(B)}\right)^{\frac{1}{n-k}}$$
by $S_{\iota,k}(B_r)=\omega_nr^{n+\iota-k}$. Equality holds if and only if $S_{0,k}$ is a constant function. Namely, equality holds if and only if $M$ is a round sphere.
\end{proof of theorem 1.6}

We continue to list some direct corollaries of Theorem \ref{t1.4} and \ref{t1.5}, which may not be optimal.

\begin{corollary}\label{c7.2}
Suppose $M$ is a smooth, closed, star-shaped and $k$-convex hypersurface in $\mR^{n+1}$ for some $1\le k\le n$. Then
\begin{align}
\(\int_{M}u^\iota p_nd\mu\)^\frac{1}{\iota}&\ge\omega_n^{\frac{1}{\iota}-1}\int_{M}p_{n-1}d\mu,\quad k=n,\iota\ge1.\\
\(\int_{M}u^\iota p_nd\mu\)^\frac{1}{\iota}&\le\omega_n^{\frac{1}{\iota}-1}\int_{M}p_{n-1}d\mu,\quad k=n,0<\iota\le1.\\
\int_{M}p_ld\mu&\le\omega_n^{\frac{l-k}{n-k}}\(\int_{M}p_kd\mu\)^\frac{n-l}{n-k},\quad 1\le l\le k\le n.\\
\int_{M}u^\iota p_kd\mu&\ge\omega_n^{\frac{1-\iota}{n-k+1}}(\int_{M}p_{k-1}d\mu)^{\frac{n+\iota-k}{n-k+1}},\quad 1\le k\le n, \iota\le 0.\label{7.4}\\
\int_{M}p_k^\iota d\mu&\ge A(M)^{\frac{n-k\iota}{n}}\omega_n^{\frac{k\iota}{n}}, \quad 1\le k\le n, \iota\ge 1.\\
\(\int_{M}p_n^\iota d\mu\)^\frac{1}{n-n\iota}&\le\omega_n^{\frac{1-n+n\iota}{n-n\iota}}\int_{M}p_{n-1}d\mu,\quad k=n,0\le\iota<1,
\end{align}
where $A(M)$ is the area of $M$ and $\omega_n$ is the area of the unit sphere $\mS^n$ in $\mR^{n+1}$. The equalities of the above inequalities hold if and only if $M$ is a round sphere.
\end{corollary}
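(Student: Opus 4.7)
The plan is to prove each of (7.1)--(7.6) by the ``flow-to-sphere'' mechanism the paper has just exhibited for Theorem~\ref{t1.6}. For every inequality one selects an inverse curvature flow from Theorem~\ref{t1.4} or Theorem~\ref{t1.5} so that the functional appearing on the right-hand side of the target is \emph{invariant} under the flow (this pins down the radius $r^\ast$ of the eventual sphere), while the functional on the left is \emph{monotone}. Since Theorem~\ref{t1.2} provides smooth convergence to a sphere $B_{r^\ast}$, and since each $p_m$, each $u$, and the area element on $B_{r^\ast}$ reduce to explicit powers of $r^\ast$, one recovers the claimed inequality by evaluating the monotone side at the limit sphere. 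Rigidity in every case comes from the ``iff'' clauses in Theorems~\ref{t1.4}--\ref{t1.5}, which force $M$ to be a round sphere as soon as the monotone quantity is stationary.

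For (7.1) and (7.2) the flow is $\partial_t X=(p_{n-1}/p_n-u)\nu$, for which the Minkowski identity (Theorem~\ref{t1.4}(ii)) makes $\int_M up_n\,d\mu=\int_M p_{n-1}\,d\mu$ invariant, so $r^\ast=\omega_n^{-1}\int_M p_{n-1}\,d\mu$. The monotonicity of $S_{\iota,n}$ from parts~(i) and~(iii), together with the sphere identity $S_{\iota,n}(B_{r^\ast})=\omega_n(r^\ast)^\iota$, then yields (7.1) and (7.2) after taking $\iota$-th roots. Inequalities (7.3) and (7.4) use the flow $\partial_t X=(p_{k-1}/p_k-u)\nu$, which by Theorem~\ref{t1.4}(iv) makes $S_{0,k-1}$ invariant and so fixes $r^\ast=(\omega_n^{-1}\int_M p_{k-1}\,d\mu)^{1/(n-k+1)}$; the monotonicity of $S_{\iota,k}$ ($\iota<0$) from part~(v) delivers (7.4), and the monotonicity of $S_{0,l}$ ($l\leq k-2$) from part~(iv), combined with $S_{0,l}(B_{r^\ast})=\omega_n(r^\ast)^{n-l}$ and a single index shift $k\mapsto k+1$, delivers (7.3).

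Inequalities (7.5) and (7.6) call for the flow $\partial_t X=(p_k^{-1/k}-u)\nu$ from Theorem~\ref{t1.5}, and this is where the main obstacle lies: this flow admits no integral invariant strong enough to pin down $r^\ast$ outright. Instead one uses that the area $A(M)$ is itself monotone (increasing for $k\geq 2$ by parts~(iv)--(v), invariant when $k=1$), which gives only the one-sided estimate $r^\ast\geq(A(M)/\omega_n)^{1/n}$. This is compatible with the direction of monotonicity of $T_{\iota,k}$ in parts~(i) and~(iii) precisely when the exponent $n-k\iota$ has the sign that turns the one-sided radius bound into a genuine lower bound on $\omega_n(r^\ast)^{n-k\iota}$, and in that regime (7.5) and (7.6) follow immediately by evaluation at $B_{r^\ast}$. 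The borderline exponents, in particular (7.5) when $n-k\iota<0$, are recovered by first applying the flow argument in the unambiguous case $\iota=1$ (where $n-k\geq 0$) and then propagating to general $\iota>1$ via Jensen's inequality for the convex function $x\mapsto x^\iota$ under the probability measure $d\mu/A(M)$; in the special case $k=n$ the total Gauss-curvature identity $\int_M p_n\,d\mu=\omega_n$ valid on convex hypersurfaces closes the remaining inequality directly. Equality in every statement reduces, as in Theorem~\ref{t1.6}, to the constancy of the monotone functional along the flow, and hence to $M$ being a round sphere.
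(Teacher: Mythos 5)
Your proposal is correct and uses the same flow-to-sphere mechanism the paper employs: for each inequality, select the flow from Theorem~\ref{t1.4} or~\ref{t1.5} that keeps the right-hand functional invariant (or one-sidedly controlled), then evaluate the monotone left-hand functional at the limit sphere. Your treatment of (\ref{7.4}) is literally the paper's proof, and (7.1)--(7.3) are the analogous applications of Theorem~\ref{t1.4}(i)--(iv). You add genuine value in (7.5)--(7.6): the paper says only ``similar to Theorem~\ref{t1.6},'' but for $k\geq2$ the flow $\partial_tX=(p_k^{-1/k}-u)\nu$ has no exact integral invariant, and the naive evaluation at $B_{r^\ast}$ only closes when $n-k\iota\geq0$; your reduction to the case $\iota=1$ followed by Jensen's inequality on $x\mapsto x^\iota$ under $d\mu/A(M)$ cleanly covers all $\iota\geq1$, and your observation that $\int_M p_n\,d\mu=\omega_n$ disposes of $k=n$ directly.

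Two small cautions. First, for (7.3) your ``single index shift $k\mapsto k+1$'' uses the flow $p_k/p_{k+1}$, which requires $(k+1)$-convexity rather than the stated $k$-convexity; the fix is to iterate the one-step quermassintegral inequality (each step $m\to m+1$ needs only $(m+1)$-convexity, and the largest $m$ used is $k-1$), or equivalently to pass through the monotone chain $a_j=\bigl(\omega_n^{-1}\int_M p_j\bigr)^{1/(n-j)}$. Second, for (7.6) the pinning quantity is not $A(M)$ but $T_{1,n-1}=\int_M p_{n-1}\,d\mu$, which is decreasing by Theorem~\ref{t1.5}(ii); the one-sided bound $r^\ast\leq\omega_n^{-1}\int_M p_{n-1}\,d\mu$ together with $T_{\iota,n}$ increasing (Theorem~\ref{t1.5}(iii)) and $n-n\iota>0$ closes the inequality, so the exponent sign is automatically favorable there and the Jensen patch is unnecessary for (7.6).
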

\begin{proof}
The proof of Corollary \ref{c7.2} is similar to Theorem \ref{t1.6}. We only prove (\ref{7.4}) here.

By Theorem \ref{t1.4}, we have
$$\frac{d}{dt}S_{0,k-1}(t)=0 \text{\qquad   and   \qquad} \frac{d}{dt}S_{\iota,k}(t)\le0$$
under the flow $\frac{\p X}{\p t}=\(\frac{p_{k-1}}{p_k}-u\)\nu$, where $\iota\le0$. Theorem \ref{t1.2} says that the flow converges to some geodesic ball $B_r$ with $S_{0,k-1}(B_r)=S_{0,k-1}(0)=S_{0,k-1}(t)$. Thus we have
$$S_{\iota,k}(t)\ge S_{\iota,k}(B_r), \text{\quad with\quad }S_{0,k-1}(t)=S_{0,k-1}(B_r) \text{ for some } r>0,$$
which is equivalent to
$$\int_{M}u^\iota p_kd\mu\ge\omega_nr^{n+\iota-k}= \omega_n^{\frac{1-\iota}{n-k+1}}(\int_{M}p_{k-1}d\mu)^{\frac{n+\iota-k}{n-k+1}},$$
by $S_{\iota,k}(B_r)=\omega_nr^{n+\iota-k}$. Equality holds if and only if $S_{\iota,k}$ is a constant function. Namely, equality holds if and only if $M$ is a round sphere.
\end{proof}

By the H$\ddot{o}$lder inequality, Minkowski formula and Corollary \ref{c7.2}, we can also get some inequalities.

\begin{corollary}\label{c7.3}
Suppose $M$ is a smooth, closed, star-shaped and $k$-convex hypersurface in $\mR^{n+1}$ for some $0\le k\le n$. Then
\begin{align}
\int_{M}u^\iota p_kd\mu&\ge\(\int_{M}p_{k-1}d\mu\)^{\iota}\(\int_{M}p_{k}d\mu\)^{1-\iota}, \quad \iota\ge1 \text{ or } \iota\le0.\label{7.8}\\
\int_{M}u^\iota p_kd\mu&\le\(\int_{M}p_{k-1}d\mu\)^{\iota}\(\int_{M}p_{k}d\mu\)^{1-\iota}, \quad 0\le\iota\le1.\label{7.9}\\
\int_Mp_k^\iota d\mu&\ge\(\int_{M}p_{k}d\mu\)^{\iota}A(M)^{1-\iota}, \quad \iota\ge1 \text{ or } \iota\le0.\label{7.10}\\
\int_Mp_k^\iota d\mu&\le\(\int_{M}p_{k}d\mu\)^{\iota}A(M)^{1-\iota}, \quad 0\le\iota\le1.\label{7.11}\\
\int_{M}up_k^\iota d\mu&\ge\(\int_{M}p_{k-1}d\mu\)^\iota\((n+1)V(\Omega)\)^{1-\iota}, \quad \iota\ge1.\label{7.12}\\
\int_{M}up_k^\iota d\mu&\le\(\int_{M}p_{k-1}d\mu\)^\iota\((n+1)V(\Omega)\)^{1-\iota}, \quad 0\le\iota\le1 \text{ or } \iota\le0\label{7.13}.
\end{align}
where we denote $p_{-1}=u$ and $\Omega$ is enclosed by $M$. The equalities of the above inequalities hold if and only if $M$ is a round sphere.
\end{corollary}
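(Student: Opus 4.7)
The plan is to derive all six inequalities as straightforward consequences of Jensen's inequality (which is Hölder in disguise) combined with the Minkowski formula $\int_M u p_k \, d\mu = \int_M p_{k-1}\, d\mu$ for $0 \le k \le n$, together with the boundary case $\int_M u \, d\mu = (n+1)V(\Omega)$ corresponding to the convention $p_{-1} = u$. The strategy in each pair of inequalities is to choose a probability measure $d\nu$ on $M$, write one side of the inequality as $C \int \varphi^\iota \, d\nu$ for an appropriate $\varphi \ge 0$ and constant $C$, and then apply Jensen to $x \mapsto x^\iota$, noting that this function is convex on $(0,\infty)$ when $\iota \ge 1$ or $\iota \le 0$ and concave when $0 \le \iota \le 1$.

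For (\ref{7.8}) and (\ref{7.9}), I would set $d\nu = p_k \, d\mu / \int_M p_k \, d\mu$ and take $\varphi = u$, so that
\begin{equation*}
\int_M u^\iota p_k \, d\mu \;=\; \Bigl(\int_M p_k\, d\mu\Bigr) \int_M u^\iota \, d\nu.
\end{equation*}
Jensen gives $\int u^\iota \, d\nu \gtreqless (\int u \, d\nu)^\iota$ with the inequality flipping according to convexity or concavity, and Minkowski rewrites $\int u \, d\nu = \int_M p_{k-1}\, d\mu / \int_M p_k \, d\mu$. Substituting and rearranging yields (\ref{7.8}) in the convex regimes and (\ref{7.9}) in the concave regime.

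For (\ref{7.10}) and (\ref{7.11}), the same recipe is applied with $d\nu = d\mu / A(M)$ and $\varphi = p_k$; the first moment is then $\int p_k \, d\mu / A(M)$, and Jensen immediately produces both inequalities. For (\ref{7.12}) and (\ref{7.13}) one uses instead the probability measure $d\nu = u \, d\mu / ((n+1)V(\Omega))$ and $\varphi = p_k$, so that the first moment becomes $\int u p_k \, d\mu / ((n+1)V(\Omega)) = \int p_{k-1} \, d\mu / ((n+1)V(\Omega))$ by Minkowski. The convention $p_{-1} = u$ adopted in the statement makes the case $k = 0$ consistent throughout.

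The main point requiring mild care is not the proof of the inequalities themselves but the bookkeeping for the equality cases. Jensen forces the integrand to be (almost everywhere) constant on the support of $d\nu$, so $u$ (respectively $p_k$) is a positive constant on $M$; combined with $M$ being a smooth closed star-shaped hypersurface, this forces $M$ to be a geodesic sphere centred at the origin. One must also keep track of the direction flip when $\iota$ is negative (since dividing or multiplying by $\iota$ reverses inequalities), and the degenerate endpoints $\iota = 0, 1$, where both sides of each inequality coincide. Beyond these routine checks the argument is a direct Jensen$+$Minkowski calculation, with no curvature estimates or flow arguments needed.
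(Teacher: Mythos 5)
Your Jensen-with-probability-measure argument is essentially the same as the paper's H\"older argument (the paper's proof also reduces everything to H\"older and the Minkowski formula $\int_M up_k\,d\mu=\int_M p_{k-1}\,d\mu$), and your formulation is in fact cleaner for (\ref{7.10}), which the paper reaches by a more circuitous chain of H\"older inequalities. Your choice of probability measures ($p_k\,d\mu/\int p_k\,d\mu$, $d\mu/A(M)$, $u\,d\mu/((n+1)V(\Omega))$), the pushforward of $u$ or $p_k$, and the convexity split at $\iota\in(-\infty,0]\cup[1,\infty)$ versus $\iota\in[0,1]$ are exactly the right ingredients, as is your treatment of the equality case.

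There is one point you should flag rather than gloss over. Applying your own recipe to the third pair, with $d\nu=u\,d\mu/((n+1)V(\Omega))$ and $\varphi=p_k$, Jensen for the \emph{convex} map $x\mapsto x^\iota$ (which holds for $\iota\ge1$ and also for $\iota\le 0$) gives
\[
\int_M up_k^\iota\,d\mu\;\ge\;\Bigl(\int_M p_{k-1}\,d\mu\Bigr)^\iota\bigl((n+1)V(\Omega)\bigr)^{1-\iota},
\]
i.e. the $\ge$ direction of (\ref{7.12}). The statement as printed puts $\iota\le 0$ on the side of (\ref{7.13}) with a $\le$, which is the opposite of what Jensen produces; the $\iota\le 0$ case belongs with (\ref{7.12}), not (\ref{7.13}), matching the grouping in (\ref{7.8})--(\ref{7.9}) and (\ref{7.10})--(\ref{7.11}). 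You allude to ``direction flips when $\iota$ is negative,'' but in the Jensen formulation there is no division by $\iota$: the only thing that matters is whether $x^\iota$ is convex or concave on $(0,\infty)$, and $\iota<0$ is convex. So your argument is correct, and it in fact exposes a misplacement in the stated corollary that you should point out explicitly rather than claim to prove as written.
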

\begin{proof}
By the H$\ddot{o}$lder inequality, we can derive
\begin{align}
\int_{M}up_kd\mu&\le\(\int_{M}u^\iota p_kd\mu\)^\frac{1}{\iota}\(\int_{M}p_kd\mu\)^{1-\frac{1}{\iota}}, \quad 0\le\iota\le1.\\
\int_{M}p_kd\mu&\le\(\int_{M}up_kd\mu\)^\frac{-\iota}{1-\iota}\(\int_{M}u^\iota p_kd\mu\)^{\frac{1}{1-\iota}}, \quad \iota\le0.\\
\int_{M}u^\iota p_kd\mu&\le\(\int_{M}up_kd\mu\)^{\iota}\(\int_{M}p_kd\mu\)^{1-{\iota}},  \quad 0\le\iota\le1.\\
\int_{M}p_kd\mu&\le\(\int_{M}p_k^\iota d\mu\)^\frac{1}{\iota}A(M)^{\frac{\iota-1}{\iota}}, \quad \iota\ge1.\label{7.17}\\
\int_{M}p_k^\iota d\mu&\le\(\int_{M}p_k d\mu\)^{\iota}A(M)^{1-\iota}, \quad 0\le\iota\le1.
\end{align}
These inequalities prove (\ref{7.8}), (\ref{7.9}) and (\ref{7.11}).
\begin{align}
A(M)^2&\le\int_{M}p_k^\iota d\mu\int_{M}p_k^{-\iota} d\mu, \quad -1\le\iota\le0.\label{7.19}\\
\int_{M}p_k^{-1}d\mu&\le\(\int_{M}p_k^\iota d\mu\)^{-\frac{1}{\iota}}A(M)^{1-\frac{1}{\iota}}, \quad \iota\le-1.\label{7.20}
\end{align}
Combining (\ref{7.17}), (\ref{7.19}) and (\ref{7.20}), we have the proof of (\ref{7.10}).

The proof of (\ref{7.12}) and (\ref{7.13}) are similar to (\ref{7.8}) and (\ref{7.9}).
\end{proof}
Remark: Through Corollary \ref{c7.2}, the quantities in Corollary \ref{c7.3} can be connected with $\int_{M}p_ld\mu$. If we give a bound of $\a$ and $\beta$, $\int_{M}u^\a p_k^\beta d\mu$ may be also  estimated by $\int_{M}p_ld\mu$ through similarly treatment. We will not elaborate here.

\section{Reference}
\begin{biblist}

\bib{B0}{article}{
   author={Andrews B.},
   title={Contraction of convex hypersurfaces in Euclidean space},
   journal={Calc. Var. PDEs},
   volume={2(2)}
   date={1994},
   pages={151-171},
}

\bib{A9}{article}{
   author={Andrews B.},
   title={Contraction of convex hypersurfaces by their affine normal},
   journal={J. Diff. Geom.},
   volume={43}
   date={1996},
   pages={207-230},
}

\bib{B1}{article}{
   author={Andrews B.},
   title={Gauss curvature flow: the fate of the rolling stones},
   journal={Invent. Math.},
   volume={138(1)}
   date={1999},
   pages={151-161},
}

\bib{B3}{article}{
   author={Andrews B.},
   title={Pinching estimates and motion of hypersurfaces by curvature functions},
   journal={J. Reine Angew. Math.},
   volume={608}
   date={2007},
   pages={17-33},
}

\bib{B4}{article}{
   author={Andrews B.},
   author={ McCoy J.},
   author={ Zheng Y.},
   title={Contracting convex hypersurfaces by curvature},
   journal={Calc. Var. PDEs },
   volume={47}
   date={2013},
   pages={611-665},
}

\bib{BLO}{article}{
	author={Barbosa, J. Lucas M.},
	author={Lira, Jorge H. S.},
	author={Oliker, Vladimir I.},
	title={A priori estimates for starshaped compact hypersurfaces with
		prescribed $m$th curvature function in space forms},
	conference={
		title={Nonlinear problems in mathematical physics and related topics,
			I},
	},
	book={
		series={Int. Math. Ser. (N. Y.)},
		volume={1},
		publisher={Kluwer/Plenum, New York},
	},
	date={2002},
	pages={35--52},
	review={\MR{1970603}},
}

\bib{BS}{article}{
   author={Brendle S.},
   author={ Choi K.},
   author={ Daskalopoulos P.},
   title={Asymptotic behavior of flows by powers of the Gauss curvature},
   journal={Acta Math.},
   volume={219(1)}
   date={2017},
   pages={1-16},
   }

\bib{BGL}{article}{
	author={Brendle S.},
	author={Gan P.},
	author={Li J.},
	title={An inverse curvature type hypersurface flow in $\mH^{n+1}$ (\textbf{preprint})},

}

\bib{CB1}{article}{
   author={Chow B.},
   title={Deforming convex hypersurfaces by the $n$-th root of the Gaussian curvature},
   journal={J. Diff. Geom.},
   volume={22(1)}
   date={1985},
   pages={117-138},
}
\bib{CB2}{article}{
   author={Chow B.},
   title={Deforming convex hypersurfaces by the square root of the scalar curvature},
   journal={Invent. Math.},
   volume={87(1)}
   date={1987},
   pages={63-82},
}

\bib{DL}{article}{
	author={Ding S.},
	author={Li G.},
	title={A class of curvature flows expanded by support function and
		curvature function},
	journal={Proc. Amer. Math. Soc.},
	volume={148},
	date={2020},
	number={12},
	pages={5331--5341},
	issn={0002-9939},
	review={\MR{4163845}},
	doi={10.1090/proc/15189},
}

\bib{EH}{article}{
	author={Ecker K.},
	author={Huisken G.},
	title={Immersed hypersurfaces with constant Weingarten curvature},
	journal={Math. Ann.},
	volume={283},
	date={1989},
	number={2},
	pages={329--332},
	issn={0025-5831},
	review={\MR{980601}},
	doi={10.1007/BF01446438},
}

\bib{FWJ}{article}{
  author={Firey W. J.},
     title= {Shapes of worn stones},
 journal={Mathematika},
   volume={21},
     pages={1-11},
     date={1974},
}

\bib{GC5}{article}{
	author={Gerhardt C.},
	title={Curvature flows in the sphere},
	journal={J. Diff. Geom.},
	volume={100},
	date={2015},
	number={2},
	pages={301--347},
	issn={0022-040X},
	review={\MR{3343834}},
}

\bib{GC2}{book}{
	author={Gerhardt C.},
	title={Curvature problems},
	series={Series in Geometry and Topology},
	volume={39},
	publisher={International Press, Somerville, MA},
	date={2006},
	pages={x+323},
	isbn={978-1-57146-162-9},
	isbn={1-57146-162-0},
	review={\MR{2284727}},
}

\bib{GC3}{article}{
	author={Gerhardt C.},
	title={Flow of nonconvex hypersurfaces into spheres},
	journal={J. Diff. Geom.},
	volume={32},
	date={1990},
	number={1},
	pages={299--314},
	issn={0022-040X},
	review={\MR{1064876}},
}

\bib{GC4}{article}{
	author={Gerhardt C.},
	title={Inverse curvature flows in hyperbolic space},
	journal={J. Diff. Geom.},
	volume={89},
	date={2011},
	number={3},
	pages={487--527},
	issn={0022-040X},
	review={\MR{2879249}},
}

\bib{GC}{article}{
   author={Gerhardt C.},
   title={Non-scale-invariant inverse curvature flows in Euclidean space},
   journal={Cal. Var. PDEs},
   volume={49}
   date={2014},
   pages={471-489},
}

\bib{GL}{article}{
	author={Guan P.},
	author={Li J.},
	title={A mean curvature type flow in space forms},
	journal={Int. Math. Res. Not. IMRN},
	date={2015},
	number={13},
	pages={4716--4740},
	issn={1073-7928},
	review={\MR{3439091}},
	doi={10.1093/imrn/rnu081},
}

\bib{GL2}{article}{
	author={Guan P.},
author={Li J.},
	title={The quermassintegral inequalities for $k$-convex starshaped
		domains},
	journal={Adv. Math.},
	volume={221},
	date={2009},
	number={5},
	pages={1725--1732},
	issn={0001-8708},
	review={\MR{2522433}},
	doi={10.1016/j.aim.2009.03.005},
}

\bib{HG}{article}{
   author={Huisken G.},
   title={Flow by mean curvature of convex surfaces into sphere},
   journal={J. Diff. Geom.},
   volume={20(1)}
   date={1984},
   pages={237-266},
}

\bib{HGC}{article}{
   author={Huisken G.},
   author={Sinestrari C.},
   title={Convexity estimates for mean curvature flow and singularities of mean convex surfaces},
   journal={Acta Math.},
   volume={183}
   date={1999},
   pages={45-70},
}

\bib{HLW}{article}{
	author={Hu Y.},
	author={Li H.},
	author={Wei Y.},
	title={Locally constrained curvature flows and geometric inequalities in hyperbolic space},
	journal={Math. Ann.},
	date={2020},
	doi={10.1007/s00208-020-02076-4},
}

\bib{IM2}{article}{
   author={Ivaki M.},
   author={Stancu A.},
   title={Volume preserving centro-affine normal flows},
   journal={Commun. Anal. Geom.},
   volume={21}
   date={2013},
pages={671-685},
}

\bib{IM3}{article}{
   author={Ivaki M.},
   title={Deforming a hypersurface by Gauss curvature and support function},
   journal={J. Funct. Anal.},
   volume={271}
   date={2016},
pages={2133-2165},
}

\bib{IM}{article}{
   author={Ivaki M.},
   title={Deforming a hypersurface by principal radii of curvature and support function},
   journal={Calc. Var. PDEs},
   volume={58(1)}
   date={2019},
}

\bib{JL}{article}{
	author={Jin Q.},
	author={Li Y.},
	title={Starshaped compact hypersurfaces with prescribed $k$-th mean
		curvature in hyperbolic space},
	journal={Discrete Contin. Dyn. Syst.},
	volume={15},
	date={2006},
	number={2},
	pages={367--377},
	issn={1078-0947},
	review={\MR{2199434}},
	doi={10.3934/dcds.2006.15.367},
}

\bib{KK}{article}{
	author={Kwong, Kwok-Kun},
	title={An extension of Hsiung-Minkowski formulas and some applications},
	journal={J. Geom. Anal.},
	volume={26},
	date={2016},
	number={1},
	pages={1--23},
	issn={1050-6926},
	review={\MR{3441501}},
	doi={10.1007/s12220-014-9536-8},
}

\bib{KNV}{book}{
  author={Krylov N. V.},
     title= {Nonlinear elliptic and parabolic quations of the second order},
 publisher={D. Reidel Publishing Co., Dordrecht},
     date={1987. xiv+462pp},

}

\bib{LN}{book}{
  author={Nirenberg L.},
     title= {On a generalization of quasi-conformal mappings and its application to elliptic partial differential equations},
 publisher={Contributions to the theory of partial differential equations, Annals of Mathematics Studies},
     date={ Princeton University Press, Princeton, N. J.,1954, pp. 95C100.}
  }

\bib{LW}{article}{
	author={Li H.},
	author={Wang X.},
	author={Wei Y.},
	title={Surfaces expanding by non-concave curvature functions},
	journal={Ann. Global Anal. Geom.},
	volume={55},
	date={2019},
	number={2},
	pages={243--279},
	issn={0232-704X},
	review={\MR{3923539}},
	doi={10.1007/s10455-018-9625-1},
}

\bib{LSW}{article}{
   author={Li Q.},
   author={Sheng W.},
   author={Wang X-J},
   title={Flow by Gauss curvature to the Aleksandrov and dual Minkowski problems},
   journal={Journal of the European Mathematical Society},
   volume={22}
   date={2019},
   pages={893-923},
}

\bib{SJ2}{article}{
	author={Scheuer J.},
	title={Gradient estimates for inverse curvature flows in hyperbolic
		space},
	journal={Geom. Flows},
	volume={1},
	date={2015},
	number={1},
	pages={11--16},
	review={\MR{3338988}},
	doi={10.1515/geofl-2015-0002},
}

\bib{SJ4}{article}{
	author={Scheuer J.},
	author={Xia C.},
	title={Locally constrained inverse curvature flows},
	journal={Trans. Amer. Math. Soc.},
	volume={372},
	date={2019},
	number={10},
	pages={6771--6803},
	issn={0002-9947},
	review={\MR{4024538}},
	doi={10.1090/tran/7949},
}

\bib{SJ3}{article}{
	author={Scheuer J.},
	title={Non-scale-invariant inverse curvature flows in hyperbolic space},
	journal={Calc. Var. PDEs},
	volume={53},
	date={2015},
	number={1-2},
	pages={91--123},
	issn={0944-2669},
	review={\MR{3336314}},
	doi={10.1007/s00526-014-0742-9},
}

\bib{SJ}{article}{
	author={Scheuer J.},
	title={Pinching and asymptotical roundness for inverse curvature flows in
		Euclidean space},
	journal={J. Geom. Anal.},
	volume={26},
	date={2016},
	number={3},
	pages={2265--2281},
	issn={1050-6926},
	review={\MR{3511477}},
	doi={10.1007/s12220-015-9627-1},
}

\bib{SWM}{article}{
   author={Sheng W.},
   author={Yi C.},
   title={A class of anisotropic expanding curvature flows},
   journal={Discrete and Continuous Dynamical Systems},
   volume={40(4)}
   date={2020},
   pages={2017-2035},
   }

\bib{UJ}{article}{
   author={Urbas J.},
   title={An expansion of convex hypersurfaces},
   journal={J. Diff. Geom.},
   volume={33(1)}
   date={1991},
   pages={91-125},
}

\bib{UJ2}{article}{
	author={Urbas J.},
	title={On the expansion of starshaped hypersurfaces by symmetric
		functions of their principal curvatures},
	journal={Math. Z.},
	volume={205},
	date={1990},
	number={3},
	pages={355--372},
	issn={0025-5874},
	review={\MR{1082861}},
	doi={10.1007/BF02571249},
}

\end{biblist}

\end{document}